\documentclass[final,onefignum,onetabnum]{siamart220329}

\usepackage{lmodern}
\usepackage{braket,amsfonts}
\usepackage{stmaryrd}
\usepackage{amsmath}

\usepackage{mathtools}
\usepackage{amssymb}
\usepackage{booktabs}
\usepackage{algorithmic}
\usepackage{algorithm}
\usepackage{array}
\usepackage{subcaption}
\usepackage{graphicx,epstopdf}
\usepackage{amsopn}
\usepackage{xspace}
\usepackage{bold-extra}
\usepackage[most]{tcolorbox}
\usepackage{amsmath}
\usepackage{enumitem}
\usepackage{amsfonts}
\usepackage{multirow} 
\usepackage{cleveref} 

\makeatletter
\def\refstepcounter@noarg#1{%
	\cref@old@refstepcounter{#1}%
	\cref@constructprefix{#1}{\cref@result}%
	\@ifundefined{cref@#1@alias}%
	{\def\@tempa{#1}}%
	{\def\@tempa{\csname cref@#1@alias\endcsname}}%
	\protected@edef\cref@currentlabel{%
		[\@tempa][\arabic{#1}][\cref@result]%
		\csname p@#1\endcsname\csname the#1\endcsname}%
}
\def\refstepcounter@optarg[#1]#2{%
	\cref@old@refstepcounter{#2}%
	\cref@constructprefix{#2}{\cref@result}%
	\@ifundefined{cref@#1@alias}%
	{\def\@tempa{#1}}%
	{\def\@tempa{\csname cref@#1@alias\endcsname}}%
	\protected@edef\cref@currentlabel{%
		[\@tempa][\arabic{#2}][\cref@result]%
		\csname p@#2\endcsname\csname the#2\endcsname}%
}
\makeatother

\numberwithin{equation}{section}
\newtheorem{remark}[theorem]{Remark}
\makeatletter
\@ifundefined{proposition}{}{}
\makeatother

\newcommand{\eqm}{\begin{eqnarray}}
	\newcommand{\enm}{\end{eqnarray}}
\crefname{hypothesis}{Hypothesis}{Hypotheses}
\usepackage[a4paper, margin=1in]{geometry} 

\Crefname{ALC@unique}{Line}{Lines}

\newcommand{\erro}{\mathrm{error}}
\newtheorem{assumption}[theorem]{Assumption}
\colorlet{texcscolor}{blue!50!black}
\colorlet{texemcolor}{red!70!black}
\colorlet{texpreamble}{red!70!black}
\colorlet{codebackground}{black!25!white!25}

\lstdefinestyle{siamlatex}{%
	style=tcblatex,
	texcsstyle=*\color{texcscolor},
	texcsstyle=[2]\color{texemcolor},
	keywordstyle=[2]\color{texemcolor},
	moretexcs={cref,Cref,maketitle,mathcal,text,headers,email,url},
}

\tcbset{%
	colframe=black!75!white!75,
	coltitle=white,
	colback=codebackground, 
	colbacklower=white, 
	fonttitle=\bfseries,
	arc=0pt,outer arc=0pt,
	top=1pt,bottom=1pt,left=1mm,right=1mm,middle=1mm,boxsep=1mm,
	leftrule=0.3mm,rightrule=0.3mm,toprule=0.3mm,bottomrule=0.3mm,
	listing options={style=siamlatex}
}

\newtcblisting[use counter=example]{example}[2][]{%
	title={Example~\thetcbcounter: #2},#1}

\newtcbinputlisting[use counter=example]{\examplefile}[3][]{%
	title={Example~\thetcbcounter: #2},listing file={#3},#1}
\allowdisplaybreaks
\DeclareTotalTCBox{\code}{ v O{} }
{ 
	fontupper=\ttfamily\color{black},
	nobeforeafter,
	tcbox raise base,
	colback=codebackground,colframe=white,
	top=0pt,bottom=0pt,left=0mm,right=0mm,
	leftrule=0pt,rightrule=0pt,toprule=0mm,bottomrule=0mm,
	boxsep=0.5mm,
	#2}{#1}

\patchcmd\newpage{\vfil}{}{}{}
\begin{tcbverbatimwrite}{tmp_\jobname_header.tex}
	\title{A Perturbation-Correction Method Based on Local Randomized Neural Networks for Quasi-Linear Interface Problems\thanks{\textbf{Corresponding Author:} School of Mathematical Sciences, Ministry of Education Key Laboratory of NSLSCS, Nanjing Normal University, Nanjing, 210023, China (\email{zhangzhiyue@njnu.edu.cn}).\funding{This work is supported by the National Natural Science Foundation of China (No.~12571431).}}}
	
	\author{Siyuan Lang \thanks{School of Mathematical Sciences, Ministry of Education Key Laboratory of NSLSCS, Nanjing Normal University, Nanjing, 210023, China, (\email{2679040392@qq.com}).}
		\and Zhiyue Zhang*}
	
\end{tcbverbatimwrite}
\input{tmp_\jobname_header.tex}

\ifpdf
\hypersetup{ pdftitle={A Perturbation-Correction Method Based on Local Randomized Neural Networks for Quasi-Linear Interface Problems} }
\fi

\begin{document}
	\setlength{\abovedisplayskip}{5pt}
	\setlength{\belowdisplayskip}{5pt}
	
	\maketitle
	
	\begin{tcbverbatimwrite}{tmp_\jobname_abstract.tex}
		\begin{abstract}
~\par
For quasi-linear elliptic interface problems with discontinuous diffusion coefficients, randomized neural network approximations may exhibit stagnation because the associated objective functional is generally nonconvex. This paper proposes a Local Randomized Neural Network (LRaNN) perturbation-correction method, denoted by LRaNN-PC, to alleviate this stagnation. The method represents the solution using an LRaNN on each subdomain, coupled through the interface conditions in a domain-decomposed framework. It consists of a primary stage and a perturbation-correction (PC) stage. The primary stage computes the primary approximation by minimizing the original nonconvex objective functional. The PC stage constructs a residual-driven correction by performing a local expansion of the residual around the primary approximation and representing the correction in an independently generated randomized trial space. The correction coefficients are obtained by solving a least-squares residual-correction subproblem in this trial space. For the solution-dependent quasi-linear elliptic model under the stated sufficient assumptions, we derive a residual-controlled upper bound for the broken $H^1$ seminorm error. The bound involves the discrete residual, the quadrature error, the residual of the perturbation-correction subproblem, and the truncation remainder of the perturbation expansion. Numerical experiments further test irregular interfaces and high-contrast coefficients within this setting, and also examine gradient-dependent diffusivities and moving-interface extensions. In the tested benchmarks, LRaNN-PC reduces the relative $L^2$ error by up to $4$--$7$ orders of magnitude compared with the primary LRaNN stage.
\end{abstract}

		\begin{keywords}
			Quasi-linear interface problems; Local Randomized Neural Networks; Perturbation-correction method; Residual-controlled error bound; Domain decomposition 
		\end{keywords}
		
		\begin{MSCcodes}
			35R05, 35J60, 65N55, 68T07, 90C25
		\end{MSCcodes}
	\end{tcbverbatimwrite}
	\input{tmp_\jobname_abstract.tex}

\section{Introduction}\label{sec1}
Interface problems with nonlinear diffusion coefficients~\cite{ying2004decomposition} arise in many physical and engineering models. Examples include composite material manufacturing~\cite{material_mechanics,donato2015homogenization}, battery electrochemistry~\cite{kirk2023nonlinear,zeng2013efficient}, and biological transport processes~\cite{brenner2013interfacial,teigen2009diffuse}. In these problems, discontinuities across material interfaces are intrinsic to the model. Achieving high accuracy therefore requires both reliable treatment of the interface conditions and stable handling of the nonlinearity.

\par Classical numerical methods have provided effective tools for interface problems, particularly in the linear setting.  Examples include finite difference methods (FDM)~\cite{wang2018convergence},
finite volume methods (FVM)~\cite{wang2021new,zhao2021semi},
the immersed boundary method (IBM)~\cite{IBM1977},
immersed interface methods (IIM)~\cite{li2006immersed,leveque1994immersed},
the matched interface and boundary method (MIB)~\cite{yu2007matched,zhou2006high,MIB2014},
ghost fluid methods (GFM)~\cite{fedkiw1999non,liu2000boundary,gfm2003},
and finite element methods (FEM)~\cite{babuvska1970finite,hansbo2002unfitted,burman2015cutfem}. Additional results for linear interface problems can be found in~\cite{kirchhart2016analysis,DG2012,lehrenfeld2013analysis} and the references therein. More recently, mesh-based formulations have been extended to nonlinear cases. For example, immersed methods on interface-independent meshes have been developed for quasi-linear elliptic problems with discontinuous diffusion under homogeneous jump conditions~\cite{adjerid2025immersed}.

\par In recent years, mesh-free neural network methods have provided an alternative approximation framework for partial differential equations (PDE) problems. Physics-informed neural networks (PINNs)~\cite{raissi2019physics}, the deep Galerkin method~\cite{sirignano2018dgm}, and the deep Ritz method~\cite{yu2018deep} formulate PDE approximation through residual or variational optimization. Multi-stage training strategies have also been investigated in the PINN framework~\cite{wang2024multi,aldirany2024multi}. Randomized neural networks (RaNNs)~\cite{pao1994learning} and extreme learning machines (ELMs)~\cite{huang2006extreme} are particularly relevant here. Their hidden-layer parameters are randomly generated and fixed, while only the output coefficients are trained. These models have approximation guarantees under suitable assumptions~\cite{igelnik1995stochastic,lin2014elm}.

\par Within this mesh-free framework, neural network methods have also been developed for interface problems and can be organized into several directions.
One direction uses PINNs with network designs specialized for interface problems, including discontinuity-capturing shallow networks~\cite{hu2022discontinuity},
cusp-capturing PINNs~\cite{tseng2023cusp}, interface PINNs~\cite{sarma2024interface},
adaptive interface PINNs~\cite{roy2024adaptive}, hard-constraint PINNs~\cite{lai2025hard},
piecewise deep networks~\cite{he2022mesh},
and PINNs for two-phase and multiphase problems with moving interfaces~\cite{zhu2023physics,li2025physics,zhai2026physics}.
Another direction uses randomized neural networks (RaNNs) and extreme learning machines (ELMs).
Examples include the random feature method~\cite{chen2022bridging,chi2024random},
local RaNNs based on subdomain decomposition, which can be interpreted as piecewise randomized neural network representations~\cite{li2023local},
local RaNNs with finite-difference assembly~\cite{li2025local},
ELM with nonoverlapping domain decomposition~\cite{lee2025nonoverlapping},
and PIELM for evolving interfaces and Stefan problems~\cite{zeng2025high,ren2025physics}.
A third direction couples neural networks with classical discretizations, such as hybrid NN--MAC
schemes~\cite{chang2023hybrid} and immersed interface neural networks~\cite{zhang2025immersed}.
Operator learning offers another route to interface problems~\cite{wu2024solving,bi2025xi,fan2024decoupling,fan2026hybrid}, and mesh-free kernel methods extend to nonlinear diffusion problems with jump coefficients~\cite{liu2023meshfree}. Beyond the interface setting, related neural and randomized formulations target nonlinear elliptic and parabolic equations~\cite{dong2021local,lu2021deepxde,kharazmi2021hp}, with rigorous analysis of the approximation and generalization properties of neural PDE approximations~\cite{mishra2022estimates,mishra2023estimates}.
Nevertheless, high-accuracy randomized or neural approximations for quasi-linear interface problems are still challenging in many settings. A central difficulty is that the objective functional induced by the nonlinear operator is generally nonconvex, so the primary approximation may stagnate at a nonzero residual level even when the randomized basis appears sufficiently expressive in practice. This observation motivates the residual-driven correction stage introduced below.

\par In this paper, we propose a two-stage perturbation-correction method within the LRaNN
framework, denoted LRaNN-PC. The local representation assigns an independent randomized neural network to each subdomain, with randomly generated and fixed hidden parameters, and the subdomain networks are coupled through the interface conditions. The primary stage minimizes the nonconvex least-squares objective and computes the primary approximation $u_N$. After setting $\epsilon$ to the scale of the residual norm of $u_N$, the PC stage expands the residual around $u_N$ and computes a scaled correction of the form $\epsilon u_p$, where $u_p$ is represented in an independent randomized trial space with the same subdomain decomposition and interface coupling. The corrected approximation is then defined by $u_h=u_N+\epsilon u_p$. The correction coefficients are computed by solving a least-squares residual-correction subproblem in an independent trial space. Its leading-order Gauss--Newton linearized form is convex with respect to the correction coefficients, while the second-order form is generally nonlinear and is treated as a local residual-correction refinement in the numerical algorithm. For the solution-dependent quasi-linear elliptic interface model under the stated sufficient assumptions, we derive a residual-controlled upper bound on the broken $H^1$ seminorm error. The method is tested on solution-dependent quasi-linear elliptic interface problems with irregular interfaces and high-contrast coefficients, and is further examined through numerical extensions involving gradient-dependent diffusion, moving interfaces, a physically motivated problem without a closed-form solution, and a benchmark comparison with published immersed finite element (IFE) results.

\par The main contributions are as follows:
\begin{enumerate}
\item A two-stage LRaNN-PC framework for quasi-linear elliptic interface problems, combining a primary
LRaNN stage with a residual-driven PC stage on an independent randomized basis. A separate
network is assigned to each subdomain and each stage, coupled through the interface conditions.

\item A least-squares residual-correction subproblem on the independent randomized basis. Its leading-order form is convex, while the second-order form is generally nonlinear and is solved as a local residual-correction problem.

\item A residual-controlled error estimate for the model problem considered in this paper under the stated sufficient assumptions. The
broken $H^1$ seminorm error is bounded by the discrete residual, the quadrature error, the
PC-stage residual, and the truncation remainder; the estimate identifies the PC-stage residual as one computable contribution to the residual-controlled error bound.

\item Numerical evaluation on the elliptic model considered in this paper, together with gradient-dependent and moving-interface extensions and an IFE benchmark, reducing the
relative $L^2$ error by about $4$--$7$ orders of magnitude over the primary LRaNN approximation in the tested examples.
\end{enumerate}

The remainder of this paper is organized as follows.  
Sec~\ref{sec:main} introduces the quasi-linear elliptic interface model and assumptions.  
Sec~\ref{sec:3} describes the LRaNN representation and the two-stage perturbation-correction framework.  
Sec~\ref{sec:error analysis} presents the residual-controlled error upper bound and the residual-correction subproblem for the corrected approximation.
Sec~\ref{sec:4} reports numerical experiments, including elliptic tests and numerical extensions, and Sec~\ref{sec:conclusion} concludes the paper.

\section{Problem Formulation}
\label{sec:main}
We consider a class of quasi-linear elliptic interface problems with 
solution-dependent diffusion coefficients, source terms, 
and interface jump conditions:
\begin{equation}
\label{model of the problem}
\begin{cases}
-\nabla\cdot\!\left(\beta^+\!\left(x,u^+\right)\nabla u^+\right) = f^+\!\left(x,u^+\right), & x \in \Omega^+, \\[2mm]
-\nabla\cdot\!\left(\beta^-\!\left(x,u^-\right)\nabla u^-\right) = f^-\!\left(x,u^-\right), & x \in \Omega^-, \\[2mm]
\llbracket u(x) \rrbracket = w(x), & x \in \Gamma, \\[2mm]
\llbracket \beta(x,u)\,\partial_{\mathbf n}u \rrbracket = v(x), & x \in \Gamma, \\[2mm]
u(x) = g(x), & x \in \partial\Omega,
\end{cases}
\end{equation}
where $\beta(x,u)$ denotes the piecewise nonlinear coefficient
\begin{displaymath}
\beta(x,u) =
\begin{cases}
\beta^+(x,u^+), & x \in \Omega^+,\\
\beta^-(x,u^-), & x \in \Omega^-.
\end{cases}
\end{displaymath}

Let $\Omega \subset \mathbb{R}^d$ be a bounded domain with Lipschitz boundary 
$\partial\Omega$. The interface $\Gamma$ partitions $\Omega$ into two disjoint 
open Lipschitz subdomains $\Omega^+$ and $\Omega^-$, as depicted in 
Fig~\ref{fig:interface_types}, with
\begin{displaymath}
\overline{\Omega} = \overline{\Omega^+} \cup \overline{\Omega^-}, \quad 
\Omega^+ \cap \Omega^- = \emptyset, \quad 
\Gamma = \partial\Omega^+ \cap \partial\Omega^-.
\end{displaymath}
The unit normal $\mathbf{n}$ on $\Gamma$ points from $\Omega^+$ towards 
$\Omega^-$, and $\partial_{\mathbf n}u := \nabla u\cdot\mathbf{n}$. Specifically, for $x\in\Gamma$,
\begin{equation}
\label{eq:jump-definitions}
\begin{aligned}
\llbracket u\rrbracket\big|_{\Gamma}
&= \lim_{x\to\Gamma}u^+(x) - \lim_{x\to\Gamma}u^-(x) = w(x),\\
\llbracket \beta(x,u)\,\partial_{\mathbf n}u\rrbracket\big|_{\Gamma}
&= \lim_{x\to\Gamma}\beta^+(x,u^+)\nabla u^+\!\cdot\mathbf{n}
 - \lim_{x\to\Gamma}\beta^-(x,u^-)\nabla u^-\!\cdot\mathbf{n}
 = v(x).
\end{aligned}
\end{equation}
The interface is assumed to be sufficiently regular so that $\mathbf{n}$ and the 
traces in the jump conditions are well defined. The proposed framework applies 
to both cut (open) and embedded (closed) interface configurations shown in 
Fig~\ref{fig:interface_types}.

Let $V:=H^1(\Omega^+)\oplus H^1(\Omega^-)$ be the broken Sobolev space associated 
with the partition, equipped with the norm 
$$\|\phi\|_V^2:=\|\phi^+\|_{H^1(\Omega^+)}^2+\|\phi^-\|_{H^1(\Omega^-)}^2.$$ 
The coefficients $\beta^\pm$ and the sources $f^\pm$ are assumed to be sufficiently smooth in $(x,s)$ on 
$\overline{\Omega^\pm}\times\mathbb{R}$, and the data $w$, $v$, and $g$ are assumed sufficiently smooth on 
$\Gamma$ and $\partial\Omega$. These assumptions are used to make the strong-form residuals and their local expansions well defined when evaluated on the smooth LRaNN approximations. The reference solution \(u_g\) is assumed to be a piecewise \(H^1\) weak solution satisfying the flux regularity stated in Assumption~\ref{ass:weak-existence}. The unknown field is 
written $u=(u^+,u^-)$, and $u_g=(u_g^+,u_g^-)\in V$ denotes the weak solution of 
\eqref{model of the problem} introduced in Assumption~\ref{ass:weak-existence}; it 
serves as the reference solution in the error analysis of 
Sec~\ref{sec:error analysis} and the numerical comparisons of Sec~\ref{sec:4}.

\par The diffusion coefficient $\beta^\pm(x,\cdot):\mathbb{R}\to\mathbb{R}$ 
depends on $u^\pm$ in a possibly nonlinear way. We assume uniform ellipticity: 
there exists $\delta > 0$ such that
\begin{equation}
\label{eq:uniform ellipticity}
\beta^\pm(x,s) \geq \delta, \quad \forall x \in \Omega^\pm, \quad \forall s \in \mathbb{R}.
\end{equation}
We further assume that the associated quasi-linear elliptic operator is 
strongly monotone: there exists $\lambda^\pm > 0$ such that for all 
$\xi,\zeta \in H^1(\Omega^\pm)$,
\begin{equation}
\label{eq:strong-monotonicity}
\int_{\Omega^\pm} 
\big(\beta^\pm(x,\xi)\nabla \xi - \beta^\pm(x,\zeta)\nabla \zeta\big)\cdot(\nabla \xi-\nabla \zeta)\,dx 
\geq \lambda^\pm \|\nabla \xi - \nabla \zeta\|_{L^2(\Omega^\pm)}^2.
\end{equation}

\par The source term $f^\pm(x,s)$ is assumed to be one-sided monotone in $s$ 
on an interval $\mathcal{I}^\pm \subseteq \mathbb{R}$: for all 
$x \in \Omega^\pm$ and all $s,t \in \mathcal{I}^\pm$,
\begin{equation}
\label{eq:one-sided-monotonicity}
\big(f^\pm(x,s) - f^\pm(x,t)\big)(s-t) \leq 0.
\end{equation}
In the residual estimate below, this condition is used for approximations $u_*$
whose range, together with the range of $u_g$, is contained in $\mathcal I^\pm$.
If $f^\pm$ is differentiable in $s$, this condition is implied by
\begin{equation}
\label{eq:source derivative}
\partial_s f^\pm(x,s) \leq 0, \quad x \in \Omega^\pm,\quad s \in \mathcal{I}^\pm.
\end{equation}

\begin{assumption}[Existence and flux regularity of a weak solution]\label{ass:weak-existence}
Problem \eqref{model of the problem} admits a weak solution $u_g=(u_g^+,u_g^-)\in V$
that attains the boundary datum $g$ on $\partial\Omega$ and the interface data $w$
and $v$ on $\Gamma$ in the sense of traces. Moreover, the subdomain fluxes
$\beta^\pm(x,u_g^\pm)\nabla u_g^\pm$ belong to $H(\mathrm{div},\Omega^\pm)$ and admit
$L^2$ normal traces on $\partial\Omega$ and $\Gamma$.
\end{assumption}

For transmission problems of this type, existence and uniqueness can be obtained by standard monotone-operator arguments under additional structural assumptions on the operator and the data~\cite{showalter2013monotone,zeidler2013nonlinear}. Since the precise conditions depend on the particular nonlinearity and on the interface data, we impose Assumption~\ref{ass:weak-existence} directly and use it only as the assumption of reference solution in the residual-controlled estimate.

\begin{remark}\label{rmk:monotonicity}
Some coefficients used in Sec~\ref{sec:4} (for example $\beta=1+u$ or 
$\beta=1+0.1\,u^3$) do not satisfy the uniform ellipticity 
\eqref{eq:uniform ellipticity} on all of $\mathbb{R}$, but only on the range of 
the corresponding solution. For these examples, uniform ellipticity is understood on the solution range used in the computation. We do not verify the global strong monotonicity condition for all numerical examples; those cases should be interpreted as numerical tests beyond the sufficient assumptions used in Theorem~\ref{thm:residual-bound}. The LRaNN-PC algorithm does not require strong monotonicity for its implementation, although the residual-controlled analysis in Section~\ref{sec:error analysis} relies on this assumption.
\end{remark}

\begin{remark}\label{rmk:source}
The monotonicity condition on $f^\pm$ is a sufficient assumption used in the 
residual-controlled estimate. It can be relaxed to cases in which the positive 
one-sided Lipschitz part of $f^\pm$ is small relative to the monotonicity 
constant of the diffusion operator. In the numerical experiments, the 
perturbation-correction stage may reduce the residual beyond this sufficient 
theoretical regime.
\end{remark}

\begin{figure}[t]
\centering
\begin{subfigure}[t]{0.4\textwidth}
\centering
\includegraphics[width=\textwidth]{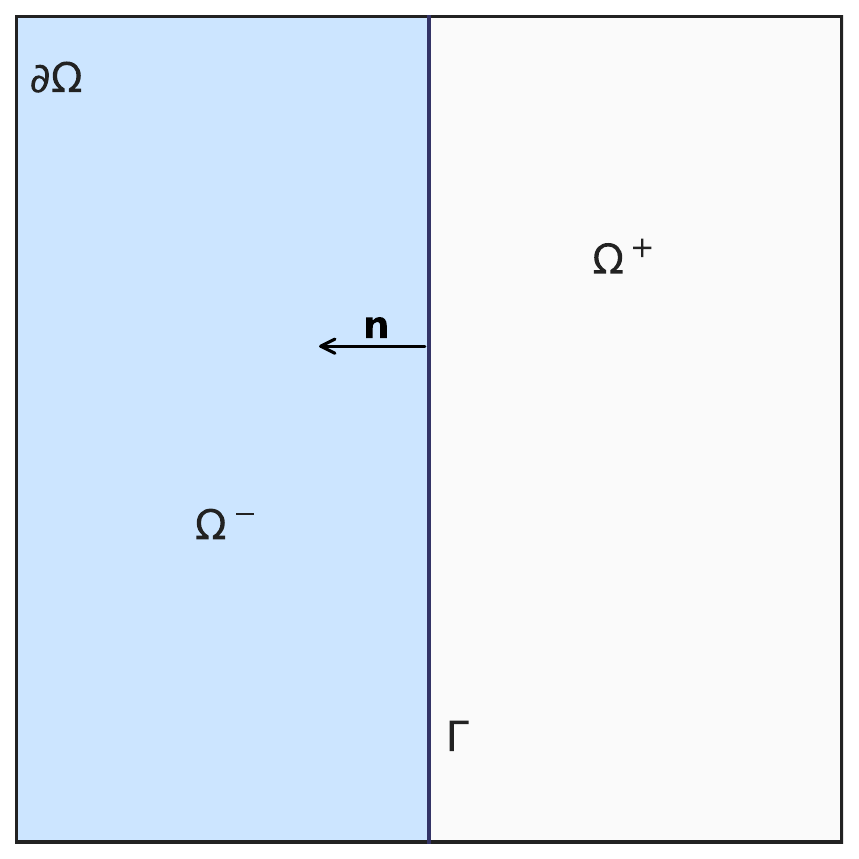}
\caption{Cut (open) interface.}
\label{fig:cut_interface}
\end{subfigure}
\hspace{0.04\textwidth}
\begin{subfigure}[t]{0.4\textwidth}
\centering
\includegraphics[width=\textwidth]{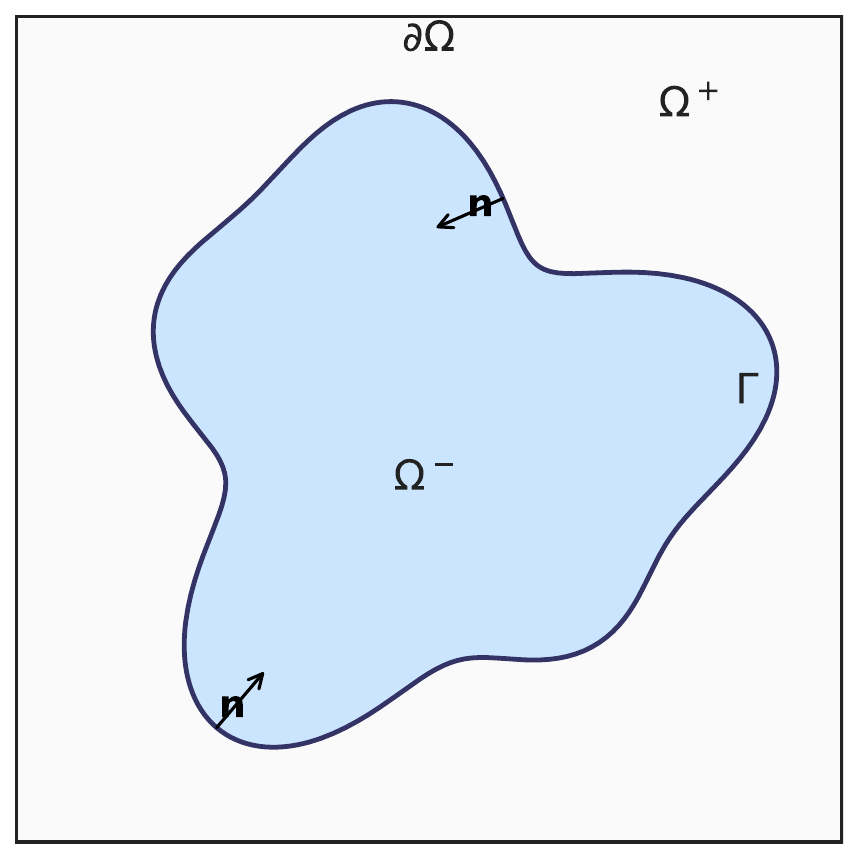}
\caption{Embedded (closed) interface.}
\label{fig:embedded_interface}
\end{subfigure}
\caption{
Two typical geometric configurations for interface problems. 
\textbf{(a)}~A cut (open) interface that intersects the exterior boundary, 
partitioning the domain into disconnected subdomains.
\textbf{(b)}~An embedded (closed) interface located entirely within the domain, 
separating it into interior and exterior regions.
}
\label{fig:interface_types}
\end{figure}
\section{Methodology}
\label{sec:3}
This section presents the numerical framework used to solve the quasi-linear interface problem. We first introduce the local randomized neural network (LRaNN) representation, then formulate the strong-form least-squares residual used for training. We then describe the two consecutive stages in the proposed framework: the primary LRaNN stage and the subsequent perturbation-correction stage.
\subsection{Local Randomized Neural Network (LRaNN) Architecture}
\label{subsec:LRaNNs}
Randomized neural networks (RaNNs) approximate functions by fixing the hidden-layer parameters after random initialization and optimizing only the output-layer coefficients. We use a single-hidden-layer feedforward architecture, as illustrated in Fig~\ref{fig:ELM}. For an input $\mathbf{x} \in \mathbb{R}^d$, the network output is a linear combination of $m$ nonlinear activation functions:
\begin{equation}
\label{eq:RaNN}
u(\mathbf{x}) = \sum_{j=1}^m \alpha_j \phi(\mathbf{w}_j \cdot \mathbf{x} + b_j) = \Phi(\mathbf{x})\boldsymbol{\alpha},
\end{equation}
where $\mathbf{w}_j\in\mathbb{R}^d$ and $b_j\in\mathbb{R}$ are fixed random weights and biases, $\phi:\mathbb{R}\to\mathbb{R}$ is a prescribed activation function, and $\alpha_j\in\mathbb{R}$ are the trainable output coefficients collected in $\boldsymbol{\alpha}$.
\par As illustrated in~Fig~\ref{fig:interface_types}, the interface $\Gamma$ partitions the computational domain into subdomains. Across $\Gamma$, the solution may be discontinuous or have reduced regularity. 
Following domain-decomposed neural-network formulations such as XPINNs and 
hp-VPINNs~\cite{jagtap2020extended,kharazmi2021hp}, as well as local 
randomized-network methods for interface problems~\cite{li2023local,li2025local}, 
we use a local RaNN representation, assigning one randomized network to each 
subdomain.
\par We describe the two-subdomain case for clarity. For \eqref{model of the problem}, we use two LRaNNs, $u_N^+$ in $\Omega^+$ and $u_N^-$ in $\Omega^-$. Both networks use $m$ hidden neurons. Their outputs are
\begin{equation}
\label{local RaNNs}
u_N^+ = \sum_{j=1}^{m}\alpha_j^+\phi^+\left(\mathbf{w}_j^+\cdot \mathbf{x}+b_j^+\right), \quad u_N^- = \sum_{j=1}^{m}\alpha_j^-\phi^-\left(\mathbf{w}_j^-\cdot \mathbf{x}+b_j^-\right).
\end{equation}
Here, $\boldsymbol{\alpha}^+=(\alpha_1^+,\dots,\alpha_m^+)$ and $\boldsymbol{\alpha}^-=(\alpha_1^-,\dots,\alpha_m^-)$ are the trainable output coefficients. The hidden-layer weights $\mathbf{w}_j^\pm$ and biases $b_j^\pm$ are fixed after random initialization, so each activation acts as a predetermined basis function on its subdomain. The unknowns in the optimization are therefore only the output coefficients $\boldsymbol{\alpha}^\pm$. The representation \eqref{local RaNNs} does not enforce the boundary or
interface conditions by construction; these conditions are imposed weakly
through the least-squares residuals introduced in Sec~\ref{subsec:LS}, so the
approximation satisfies them only approximately, at the level of the
corresponding residual.
\begin{figure}[t]
\centering
\includegraphics[width=0.5\textwidth]{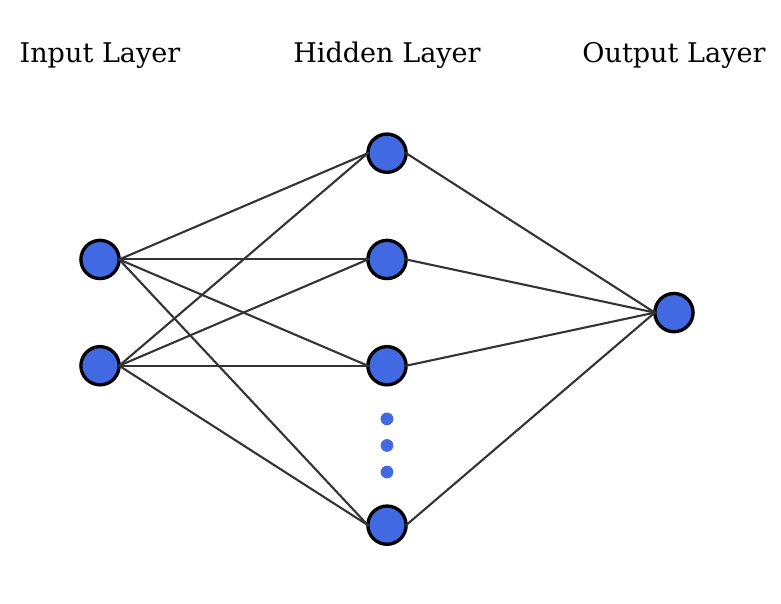}
\caption{Structure of the randomized neural network used in each subdomain.}
\label{fig:ELM}
\end{figure}

\subsection{Discrete Least-Squares Functional}
\label{subsec:LS}
We solve the LRaNN approximation by minimizing strong-form residuals of the PDE, interface conditions, and boundary condition. This gives a least-squares (LS) formulation in which all constraints in \eqref{model of the problem} are imposed through residual penalties.

The strong-form residuals are evaluated only on the randomized-network approximations, which are smooth functions for the activation functions used here. Therefore, pointwise residual evaluation requires the coefficient, source, boundary, and interface data to be pointwise defined and sufficiently smooth.

\par Using the LRaNN approximations $u_N^+$ and $u_N^-$ defined in \eqref{local RaNNs}, we define the following residual functions:
\par For the interior of each subdomain, the residual corresponds to the violation of the governing PDE. Specifically, for $x \in \Omega^+$ we define
\begin{displaymath}
\mathcal{R}_{\Omega^{+}}(x) := \nabla \cdot \left( \beta^{+}\big(x, u_{N}^{+}(x)\big) \nabla u_{N}^{+}(x) \right) + f^{+}\big(x, u_{N}^{+}(x)\big), \quad x \in \Omega^+,
\end{displaymath}
whereas for $x \in \Omega^{-}$ the corresponding expression is
\begin{displaymath}
\mathcal{R}_{\Omega^{-}}(x):= \nabla \cdot \left( \beta^{-}\big(x, u_{N}^{-}(x)\big) \nabla u_{N}^{-}(x) \right) + f^{-}\big(x, u_{N}^{-}(x)\big), \quad x \in \Omega^{-}.
\end{displaymath}
\par Across the interface $\Gamma$, we define the residual for the jump condition as
\begin{displaymath}
\mathcal{R}_{\Gamma_{d}}(x):= u_{N}^{+}\big(x\big) - u_{N}^{-}\big(x\big) - w\big(x\big), \quad x \in \Gamma, 
\end{displaymath}
and the residual for the flux jump as
\begin{displaymath}
\mathcal{R}_{\Gamma_{n}}(x) := \left[ \beta^{+}\big(x, u_{N}^{+}(x)\big) \partial_{\mathbf{n}}u_N^+(x) - \beta^{-}\big(x, u_{N}^{-}(x)\big) \partial_{\mathbf{n}}u_N^-(x) \right] - v\big(x\big), \quad x \in \Gamma.
\end{displaymath}
\par On the outer boundary $\partial\Omega$, we impose Dirichlet conditions, with the residual
\begin{displaymath}
\mathcal{R}_{\partial\Omega}(x) := u_{N}^{\pm}\big(x\big) - g\big(x\big), \quad x \in \partial\Omega.
\end{displaymath}

\par Collecting these terms, we define the continuous least-squares functional
\begin{equation}
\label{eq:LS-formulation}
\begin{aligned}
	\min_{\boldsymbol{\alpha}}\frac 12\Vert\mathcal{F}(\boldsymbol{\alpha})\Vert_{L^2}^2
	=\min_{\boldsymbol{\alpha}}\bigg\{&
	\frac{\omega_{\Omega^+}}{2}\Vert\mathcal{R}_{\Omega^+}\Vert^2_{L^2} + \frac{\omega_{\Omega^-}}{2}\Vert\mathcal{R}_{\Omega^-}\Vert^2_{L^2} +
	\\
	&\frac{\omega_{\Gamma_d}}{2}\Vert\mathcal{R}_{\Gamma_d}\Vert_{L^2}^2 + 
	\frac{\omega_{\Gamma_n}}{2}\Vert\mathcal{R}_{\Gamma_n}\Vert^2_{L^2} + 
	\frac{\omega_{\partial\Omega}}{2}\Vert\mathcal{R}_{\partial\Omega}\Vert^2_{L^2}\bigg\},
\end{aligned}
\end{equation}
where $\omega_{\Omega^+}$, $\omega_{\Omega^-}$, $\omega_{\Gamma_d}$, $\omega_{\Gamma_n}$, and $\omega_{\partial\Omega}$ are positive weights used to balance the residual terms. 

\par The $L^2$ norms in \eqref{eq:LS-formulation} are approximated from collocation points sampled in each subdomain, on the interface, and on the boundary:
\begin{equation}
\label{eq:collocation points}
\begin{aligned}
	&\mathcal{S}_{\Omega^+}=\{x_i^{\Omega^+}\}_{i=1}^{N_{\Omega^+}}\subseteq\Omega^+,
	\\
	&\mathcal{S}_{\Omega^{-}}=\{x_i^{\Omega^-}\}_{i=1}^{N_{\Omega^-}}\subseteq\Omega^-,
	\\
	&\mathcal{S}_{\Gamma_d}=\{x_i^{\Gamma_d}\}_{i=1}^{N_{\Gamma_d}}\subseteq\Gamma,
	\qquad
	\mathcal{S}_{\Gamma_n}=\{x_i^{\Gamma_n}\}_{i=1}^{N_{\Gamma_n}}\subseteq\Gamma,
	\\
	&\mathcal{S}_{\partial\Omega}=\{x_i^{\partial\Omega}\}_{i=1}^{N_{\partial\Omega}}\subseteq\partial\Omega.
\end{aligned}
\end{equation}
Here, $N_{\Omega^+}$, $N_{\Omega^-}$, $N_{\Gamma_d}$, $N_{\Gamma_n}$, and $N_{\partial\Omega}$ denote the numbers of sample points used for the two interior residuals, the two interface residuals, and the boundary residual, respectively. The two interface sets $\mathcal S_{\Gamma_d}$ and $\mathcal S_{\Gamma_n}$ may coincide in the implementation.

Substituting these sample sets into the residual definitions gives the weighted discrete residual vector
\begin{equation}
\label{eq:weighted_discrete_residual}
\mathcal F(\boldsymbol{\alpha})
=
\begin{pmatrix}
	\sqrt{\frac{\omega_{\Omega^+}}{N_{\Omega^+}}}
	\mathcal R_{\Omega^+}(x_i^{\Omega^+};\boldsymbol{\alpha}^+)
	\\[1mm]
	\sqrt{\frac{\omega_{\Omega^-}}{N_{\Omega^-}}}
	\mathcal R_{\Omega^-}(x_i^{\Omega^-};\boldsymbol{\alpha}^-)
	\\[1mm]
	\sqrt{\frac{\omega_{\Gamma_d}}{N_{\Gamma_d}}}
	\mathcal R_{\Gamma_d}(x_i^{\Gamma_d};\boldsymbol{\alpha})
	\\[1mm]
	\sqrt{\frac{\omega_{\Gamma_n}}{N_{\Gamma_n}}}
	\mathcal R_{\Gamma_n}(x_i^{\Gamma_n};\boldsymbol{\alpha})
	\\[1mm]
	\sqrt{\frac{\omega_{\partial\Omega}}{N_{\partial\Omega}}}
	\mathcal R_{\partial\Omega}(x_i^{\partial\Omega};\boldsymbol{\alpha})
\end{pmatrix}.
\end{equation}
In \eqref{eq:weighted_discrete_residual}, each block is understood as the vector obtained by evaluating the corresponding residual over all points in the associated collocation set.

The discrete least-squares problem is then
\begin{equation}
\label{eq:discrete form}
\begin{aligned}
	\frac{1}{2}\min_{\boldsymbol{\alpha}}\Vert\mathcal{F}(\boldsymbol{\alpha})\Vert^2_{\ell^2}
	=
	\frac 12\min_{\boldsymbol{\alpha}}\bigg\{&
	\frac{\omega_{\Omega^+}}{N_{\Omega^+}}\sum_{i=1}^{N_{\Omega^+}}|\mathcal{R}_{\Omega^+}(x_i^{\Omega^+};\boldsymbol{\alpha}^+)|^2+
	\frac{\omega_{\Omega^-}}{N_{\Omega^-}}\sum_{i=1}^{N_{\Omega^-}}|\mathcal{R}_{\Omega^-}(x_i^{\Omega^-};\boldsymbol{\alpha}^-)|^2
	\\
	&+
	\frac{\omega_{\Gamma_d}}{N_{\Gamma_d}}\sum_{i=1}^{N_{\Gamma_d}}|\mathcal{R}_{\Gamma_d}(x_i^{\Gamma_d};\boldsymbol{\alpha})|^2
	+\frac{\omega_{\Gamma_n}}{N_{\Gamma_n}}\sum_{i=1}^{N_{\Gamma_n}}|\mathcal{R}_{\Gamma_n}(x_i^{\Gamma_n};\boldsymbol{\alpha})|^2
	\\
	&+
	\frac{\omega_{\partial\Omega}}{N_{\partial\Omega}}\sum_{i=1}^{N_{\partial\Omega}}|\mathcal{R}_{\partial\Omega}(x_i^{\partial\Omega};\boldsymbol{\alpha})|^2\bigg\}.
\end{aligned}
\end{equation}
The output coefficients $\boldsymbol{\alpha}$ are obtained by solving \eqref{eq:discrete form}.

\subsection{Primary LRaNN Stage}
\label{subsec:Newton iterative}
We first solve the discrete nonlinear least-squares problem using the LRaNN ansatz introduced above. The primary LRaNN stage seeks
\begin{equation}
\label{eq:min}
\min_{\boldsymbol{\alpha}} G(\boldsymbol{\alpha})
= \min_{\boldsymbol{\alpha}} \frac{1}{2} \|\mathcal{F}(\boldsymbol{\alpha})\|_{\ell^2}^2,
\end{equation}
where $\boldsymbol{\alpha}=(\boldsymbol{\alpha}^+,\boldsymbol{\alpha}^-)$ is the concatenated vector of output coefficients. The weighted global residual vector $\mathcal{F}\in\mathbb{R}^{N_{\text{total}}}$, where
\[
N_{\text{total}}=N_{\Omega^+}+N_{\Omega^-}+N_{\Gamma_d}+N_{\Gamma_n}+N_{\partial\Omega},
\]
is formed by stacking all row-scaled residuals evaluated at the collocation points, as defined in \eqref{eq:weighted_discrete_residual}. Equivalently, before row scaling, the residual components are arranged as
\begin{equation}
\label{eq:F}
\begin{pmatrix}
	\mathcal{R}_{\Omega^+}
	\\
	\mathcal{R}_{\Omega^-}
	\\
	\mathcal{R}_{\Gamma_d}
	\\
	\mathcal{R}_{\Gamma_n}
	\\
	\mathcal{R}_{\partial\Omega}
\end{pmatrix}.
\end{equation}
The same row scaling is applied to the corresponding rows of the Jacobian. For clarity, the following formulas give the unweighted pointwise Jacobian blocks.

We solve \eqref{eq:min} with a regularized Gauss--Newton iteration, 
following the standard nonlinear least-squares framework~\cite{nocedal2006numerical,bjorck2024numerical}. At iteration $k$, the Hessian of $G$ is approximated by ${\mathcal{J}^{\{k\}}}^T\mathcal{J}^{\{k\}}$, where $\mathcal{J}^{\{k\}}=\partial\mathcal{F}^{\{k\}}/\partial\boldsymbol{\alpha}$ is the (row-scaled) Jacobian used in the stage; for clarity we display its unweighted block structure,
\begin{equation}
\label{eq:Jacobi}
\mathcal{J}=
\begin{pmatrix}
	\mathcal{J}_{\Omega^+} & \mathbf{0}_{N_{\Omega^+}\times m}
	\\
	\mathbf{0}_{N_{\Omega^-}\times m} & \mathcal{J}_{\Omega^-}
	\\
	\mathcal{J}_{\Gamma_d^+} &
	\mathcal{J}_{\Gamma_d^-}
	\\
	\mathcal{J}_{\Gamma_n^+} &
	\mathcal{J}_{\Gamma_n^-}
	\\ 
	\mathcal{J}_{\partial\Omega^+} & \mathbf{0}_{N^+_{\partial\Omega}\times m}
	\\
	\mathbf{0}_{N^-_{\partial\Omega}\times m} & \mathcal{J}_{\partial\Omega^-}
\end{pmatrix}
.
\end{equation}
The zero blocks indicate residuals that are independent of the coefficients from the other subdomain. Here $N_{\partial\Omega}=N^+_{\partial\Omega}+N^-_{\partial\Omega}$ splits the boundary points by the subdomain on whose outer boundary they lie. For brevity, we write
\begin{displaymath}
\beta_N^\pm=\beta^\pm(x,u_N^\pm), \quad 
\beta_{u,N}^\pm=\partial_u\beta^\pm(x,u_N^\pm), \quad
\beta_{uu,N}^\pm=\partial_{uu}\beta^\pm(x,u_N^\pm),
\end{displaymath}
and define $f_N^\pm$, $f_{u,N}^\pm$, and $f_{uu,N}^\pm$ analogously, all evaluated at $(x, u_N^\pm(x))$ unless otherwise stated. The same shorthand is reused in the perturbation--correction stage of Sec~\ref{subsec:pcm}. 

The submatrices of the Jacobian are evaluated pointwise at the collocation points. For $x_i\in\mathcal{S}_{\Omega^+}$ and activation function $\phi_j^+$, the interior block is
\begin{equation}
\begin{aligned}
\left(\mathcal{J}_{\Omega^+}\right)_{i,j}
&:=\frac{\partial\mathcal{R}_{\Omega^+}}{\partial\alpha_j^+}\Bigg|_{x=x_i}
\\
&=
\left[
\nabla\cdot\left(
\beta_N^+\nabla\phi_j^+
+
\beta_{u,N}^+\phi_j^+\nabla u_N^+
\right)
+
f_{u,N}^+\phi_j^+
\right]_{x=x_i}.
\end{aligned}
\end{equation}
The block $\mathcal{J}_{\Omega^-}$ is defined analogously on $\mathcal{S}_{\Omega^-}$. 

For $x_i\in\mathcal{S}_{\Gamma_d}$, the blocks associated with the solution jump are
\begin{equation}
\left(\mathcal{J}_{\Gamma_d^+}\right)_{i,j}
:=
\frac{\partial\mathcal{R}_{\Gamma_d}}{\partial \alpha^+_j}\Bigg|_{x=x_i}
=
\phi^+_j(x_i),
\end{equation}
and
\begin{equation}
\left(\mathcal{J}_{\Gamma_d^-}\right)_{i,j}
:=
\frac{\partial\mathcal{R}_{\Gamma_d}}{\partial \alpha^-_j}\Bigg|_{x=x_i}
=
-\phi^-_j(x_i).
\end{equation}
For $x_i\in\mathcal{S}_{\Gamma_n}$, the blocks associated with the flux jump are
\begin{equation}
\left(\mathcal{J}_{\Gamma_n^+}\right)_{i,j}
:=
\frac{\partial\mathcal{R}_{\Gamma_n}}{\partial \alpha^+_j}\Bigg|_{x=x_i}
=
\left[
\beta_{u,N}^+\phi_j^+\partial_{\mathbf{n}}u_N^+
+\beta_N^+\partial_{\mathbf{n}}\phi_j^+
\right]_{x=x_i},
\end{equation}
and
\begin{equation}
\left(\mathcal{J}_{\Gamma_n^-}\right)_{i,j}
:=
\frac{\partial\mathcal{R}_{\Gamma_n}}{\partial \alpha^-_j}\Bigg|_{x=x_i}
=
-\left[
\beta_{u,N}^-\phi_j^-\partial_{\mathbf{n}}u_N^-
+\beta_N^-\partial_{\mathbf{n}}\phi_j^-
\right]_{x=x_i}.
\end{equation}
Finally, for $x_i\in\mathcal{S}_{\partial\Omega}\cap\partial\Omega^\pm$, the boundary block is
\begin{equation}
\left(\mathcal{J}_{\partial\Omega^\pm}\right)_{i,j}
:=
\frac{\partial\mathcal{R}_{\partial\Omega}}{\partial \alpha^\pm_j}\Bigg|_{x=x_i}
=
\phi_j^\pm(x_i).
\end{equation}

The update $\delta\boldsymbol{\alpha}=(\delta\boldsymbol{\alpha}^+,\delta\boldsymbol{\alpha}^-)$ is computed from the normal equations
\begin{equation}
\label{eq:Netwon-Gauss}
{\mathcal{J}^{\{k\}}}^T\mathcal{J}^{\{k\}}\delta\boldsymbol{\alpha}=-{\mathcal{J}^{\{k\}}}^T\mathcal{F}^{\{k\}},
\end{equation}
where $k$ denotes the iteration index.
For numerical stability, we compute $\delta\boldsymbol{\alpha}$ using the singular value decomposition and a truncated Moore--Penrose pseudoinverse, 
a standard regularization strategy for ill-conditioned least-squares systems~\cite{bjorck2024numerical,hansen1987truncated}. Let the SVD of $\mathcal{J}^{\{k\}}\in\mathbb{R}^{N_{total}\times2m}$ be
\begin{equation}
\mathcal{J}^{\{k\}} = U\Sigma V^T,
\end{equation}
where $U\in\mathbb{R}^{N_{total}\times r}$ and $V\in\mathbb{R}^{2m\times r}$ have orthonormal columns, $\Sigma=\text{diag}\{\sigma_1,\sigma_2,\cdots,\sigma_r,0,\cdots\}$ contains the singular values $\sigma_1\geq\sigma_2\geq\cdots\geq\sigma_r>0$, and $r=\text{rank}(\mathcal{J}^{\{k\}})$.
We truncate all singular values below a prescribed threshold $\tau>0$ and retain the indices $\mathcal{I}_r=\{i:\sigma_i\geq \tau\}$. The truncated pseudoinverse of $\Sigma$ is
\begin{equation}
\left(\Sigma^\dagger_r  \right)_{ii}
=
\begin{cases}
	1/\sigma_i,&\sigma_i\geq\tau,
	\\
	0,&\sigma_i<\tau.
\end{cases}
\end{equation}
Accordingly, the regularized update is obtained as
\begin{equation}
\delta\boldsymbol{\alpha}=-V\Sigma_r^\dagger U^T\mathcal{F}^{\{k\}}.
\end{equation}
The output coefficients are then updated by
\begin{equation}
\label{eq:Netwon-Gauss-update}
\boldsymbol{\alpha}^{\{k+1\}}=\boldsymbol{\alpha}^{\{k\}}+\delta\boldsymbol{\alpha}.
\end{equation}
The iteration is terminated once the relative change in the residual norm satisfies 
\begin{equation}
\label{eq:Netwon_converge}
\frac{|\Vert\mathcal{F}(\boldsymbol{\alpha}^{\{k+1\}})\Vert_{\ell^2}-\Vert\mathcal{F}(\boldsymbol{\alpha}^{\{k\}})\Vert_{\ell^2}|}{\Vert\mathcal{F}(\boldsymbol{\alpha}^{\{k+1\}})\Vert_{\ell^2}}\leq\delta_0.
\end{equation}
The initial value $\boldsymbol{\alpha}_0$ is prescribed, for example $\boldsymbol{\alpha}_0=\boldsymbol{0}$. 

The Gauss--Newton system follows from the gradient identity $\nabla_{\alpha}G=\mathcal{J}^T\mathcal{F}$, with $\mathcal{J}^T\mathcal{J}$ used as the standard Gauss--Newton approximation of the Hessian. Thus, \eqref{eq:Netwon-Gauss} gives the usual least-squares Gauss--Newton update for the overdetermined collocation system. Since $G(\boldsymbol{\alpha})$ is generally nonconvex for the quasi-linear problem, the iteration may converge to a stationary point with nonzero residual. The resulting primary approximation is
\begin{equation}
\label{eq:corresponding solution}
u_N^\pm=\sum_{j=1}^m\alpha_{j}^\pm\phi^\pm(\mathbf{w}_j^\pm\cdot x+b_j^\pm).
\end{equation}

\subsection{Perturbation--Correction Stage}\label{subsec:pcm}
To reduce the residual left by the primary LRaNN stage, we introduce a perturbation-correction stage based on a local expansion of the residual operator. The primary approximation $u_N^\pm$ may retain nonzero residuals because the original least-squares problem is nonconvex. We seek a corrected approximation of the form
\begin{equation}\label{eq:perturbation expansion}
u_h^\pm = u_N^\pm + \epsilon\,u_p^\pm,\qquad \epsilon\,u_p^\pm \approx u_g^\pm - u_N^\pm=e_N^\pm,
\end{equation}
where $u_N^\pm$ is the primary approximation, $u_p^\pm$ is the scaled perturbation-correction function, and $u_h^\pm$ is the corrected approximation. The actual correction added to the primary approximation is
\[
\delta u^\pm:=\epsilon u_p^\pm .
\]
The primary error $e_N^\pm=u_g^\pm-u_N^\pm$ is the quantity introduced in \eqref{eq:perturbation expansion}; we likewise denote the corrected error by
\begin{equation}\label{eq:error-fields}
e_h^\pm:=u_g^\pm-u_h^\pm=e_N^\pm-\epsilon\,u_p^\pm .
\end{equation}
The correction is most effective when the actual correction $\epsilon u_p^\pm$ reproduces the primary error $e_N^\pm$, the quantity compared in the interface-trace figures of Sec~\ref{sec:4}. The parameter $\epsilon>0$ is used as a normalization factor in the correction equation. In the computations, we set $\epsilon=\|\mathcal{F}(u_N)\|_{\ell^2}$, but changing $\epsilon$ mainly rescales the correction coefficients; the quantity added to the primary approximation is always $\epsilon u_p^\pm$.

\par The perturbation-correction function is obtained from a residual expansion around $u_N^\pm$ and is represented in an independent randomized trial space with the same domain decomposition and jump structure as the original model.

\par More explicitly, we expand the residual in the actual correction $\delta u^\pm=\epsilon u_p^\pm$ and then write the resulting correction equation in terms of the scaled variable $u_p^\pm$. Retaining the first-order terms gives the linear interface problem
\begin{equation}
\label{eq:up-first-order}
\begin{cases}
\nabla\cdot\!\big(\beta_N^\pm\nabla u_p^\pm+\beta_{u,N}^\pm u_p^\pm\nabla u_N^\pm\big)+f_{u,N}^\pm u_p^\pm=-\epsilon^{-1}\mathcal R_{\Omega^\pm}, & x\in\Omega^\pm,\\[1mm]
\llbracket u_p\rrbracket=-\epsilon^{-1}\mathcal R_{\Gamma_d}, & x\in\Gamma,\\[1mm]
\llbracket \beta_N\partial_{\mathbf n}u_p+\beta_{u,N}u_p\partial_{\mathbf n}u_N\rrbracket=-\epsilon^{-1}\mathcal R_{\Gamma_n}, & x\in\Gamma,\\[1mm]
u_p^\pm=-\epsilon^{-1}\mathcal R_{\partial\Omega}, & x\in\partial\Omega,
\end{cases}
\end{equation}
whose interior and flux operators are the linearizations of $\beta(x,u)\nabla u$ about $u_N$, and whose right-hand sides are the primary residuals $\mathcal R_{\Omega^\pm}$, $\mathcal R_{\Gamma_d}$, $\mathcal R_{\Gamma_n}$, $\mathcal R_{\partial\Omega}$ of Sec~\ref{subsec:LS} evaluated at $u_N$. Retaining the second-order (two-term) expansion adds the quadratic terms in $u_p$, so that the interior and flux-jump conditions become
\begin{equation}
\label{eq:up-second-order-interior}
\begin{aligned}
\nabla\cdot\!\big(\beta_N^\pm\nabla u_p^\pm+\beta_{u,N}^\pm u_p^\pm\nabla u_N^\pm\big)+f_{u,N}^\pm u_p^\pm
&+\epsilon\Big[\nabla\cdot\!\big(\beta_{u,N}^\pm u_p^\pm\nabla u_p^\pm+\tfrac12\beta_{uu,N}^\pm (u_p^\pm)^2\nabla u_N^\pm\big)\\
&\quad+\tfrac12 f_{uu,N}^\pm (u_p^\pm)^2\Big]=-\epsilon^{-1}\mathcal R_{\Omega^\pm}, \qquad x\in\Omega^\pm,
\end{aligned}
\end{equation}
\begin{equation}
\label{eq:up-second-order-flux}
\Big\llbracket \beta_N\partial_{\mathbf n}u_p+\beta_{u,N}u_p\partial_{\mathbf n}u_N+\epsilon\big(\beta_{u,N}u_p\partial_{\mathbf n}u_p+\tfrac12\beta_{uu,N}u_p^2\partial_{\mathbf n}u_N\big)\Big\rrbracket=-\epsilon^{-1}\mathcal R_{\Gamma_n}, \qquad x\in\Gamma,
\end{equation}
while the solution-jump condition $\llbracket u_p\rrbracket=-\epsilon^{-1}\mathcal R_{\Gamma_d}$ and the boundary condition $u_p^\pm=-\epsilon^{-1}\mathcal R_{\partial\Omega}$ stay linear. Since \eqref{eq:up-second-order-interior}--\eqref{eq:up-second-order-flux} are quadratic in $u_p$, the two-term correction solves a nonlinear interface problem with the same domain decomposition, interface, and boundary structure as the original model. The perturbation residual $\mathcal F_p$ introduced next is the discrete least-squares residual of this interface problem.

\par Substituting \(u_N^\pm+\delta u^\pm\), with \(\delta u^\pm=\epsilon u_p^\pm\), into the nonlinear residual operator $\mathcal{F}(\cdot)$ and expanding the residual in the actual correction $\delta u$ gives
\begin{displaymath}
\mathcal{F}(u_N+\delta u)
=
\mathcal{F}(u_N)
+
D\mathcal{F}(u_N)[\delta u]
+
\tfrac{1}{2}D^2\mathcal{F}(u_N)[\delta u,\delta u]
+
R_3(\delta u),
\end{displaymath}
where the Taylor remainder is controlled by the size of the actual correction $\delta u$, rather than by $\epsilon$ alone. Writing $\delta u=\epsilon u_p$ gives
\begin{displaymath}
\mathcal{F}(u_N+\epsilon u_p)
=
\mathcal{F}(u_N)
+
\epsilon\,D\mathcal{F}(u_N)[u_p]
+
\tfrac{1}{2}\epsilon^2D^2\mathcal{F}(u_N)[u_p,u_p]
+
R_3(\epsilon u_p).
\end{displaymath}
Neglecting the remainder and normalizing by $\epsilon$ gives the truncated perturbation-correction residual used in the computation,
\begin{equation}
\label{eq:normalized_pc_residual}
\mathcal{F}_p(\boldsymbol{\gamma})
=
\epsilon^{-1}
\left(
\mathcal{F}(u_N)
+
\epsilon D\mathcal{F}(u_N)[u_p(\boldsymbol{\gamma})]
+
\tfrac{1}{2}\epsilon^2D^2\mathcal{F}(u_N)[u_p(\boldsymbol{\gamma}),u_p(\boldsymbol{\gamma})]
\right).
\end{equation}
The perturbation-correction coefficients are obtained by minimizing the normalized residual least-squares functional
\begin{equation}\label{eq:perturbation optimize}
\min_{\boldsymbol{\gamma}}G_p(\boldsymbol{\gamma})
=
\min_{\boldsymbol{\gamma}}
\frac12
\left\|
\mathcal{F}_p(\boldsymbol{\gamma})
\right\|_{\ell^2}^2 .
\end{equation}

\par We approximate $u_p^\pm$ with another local randomized network whose hidden-layer weights and biases are fixed. This network uses $m_p$ hidden neurons, not necessarily equal to $m$:
\begin{equation}
u_p^+=\sum_{j=1}^{m_p}\gamma_j^+\psi^+(\mathbf{w}_{pj}^+\cdot \mathbf{x}+b_{pj}^+), \quad
u_p^-=\sum_{j=1}^{m_p}\gamma_j^-\psi^-(\mathbf{w}_{pj}^-\cdot \mathbf{x}+b_{pj}^-),
\end{equation}
where $\psi^\pm$ are activation functions for the perturbation-correction network, and $\mathbf{w}_{pj}^\pm,b_{pj}^\pm$ are fixed random parameters. In the primary LRaNN stage we use $\tanh$ activations, while in the perturbation-correction stage we use $\sin$ activations to enrich the representation of residual error modes.

\par The discretized residual vector for the perturbation equation is written as
\begin{equation}
\label{eq:residual_p}
\mathcal{F}_p=
\frac 1\epsilon
\begin{pmatrix}
	\mathcal{R}_p^+
	\\
	\mathcal{R}_p^-
	\\
	\mathcal{R}_{\Gamma_d,p}
	\\
	\mathcal{R}_{\Gamma_n,p}
	\\
	\mathcal{R}_{\partial\Omega,p}
\end{pmatrix}
,
\end{equation}
evaluated at the collocation points $\mathcal{S}_{\Omega^\pm}$, $\mathcal{S}_{\Gamma_d}$, $\mathcal{S}_{\Gamma_n}$, and $\mathcal{S}_{\partial\Omega}$. As in the primary stage, the weights in the discrete least-squares functional are incorporated by row scaling.

We reuse the shorthand $\beta_N^\pm$, $\beta_{u,N}^\pm$, $\beta_{uu,N}^\pm$ and $f_N^\pm$, $f_{u,N}^\pm$, $f_{uu,N}^\pm$ introduced in Sec~\ref{subsec:Newton iterative}, all evaluated at the corresponding collocation point unless otherwise stated.

For each interior collocation point $x_i \in \mathcal S_{\Omega^\pm}$, the pointwise perturbation residual is
\begin{equation}
\begin{aligned}
\mathcal{R}_p^\pm
=&
\epsilon^0
\left[
\nabla\cdot\left(\beta_N^\pm\nabla u_N^\pm\right)
+
f_N^\pm
\right]
\\
&+
\epsilon^1
\left[
\nabla\cdot\left(
\beta_N^\pm\nabla u_p^\pm
+
\beta_{u,N}^\pm u_p^\pm\nabla u_N^\pm
\right)
+
f_{u,N}^\pm u_p^\pm
\right]
\\
&+
\epsilon^2
\left[
\nabla\cdot\left(
\beta_{u,N}^\pm u_p^\pm\nabla u_p^\pm
+
\frac12\beta_{uu,N}^\pm (u_p^\pm)^2\nabla u_N^\pm
\right)
+
\frac12 f_{uu,N}^\pm (u_p^\pm)^2
\right].
\end{aligned}
\end{equation}
Here $\epsilon^0$, $\epsilon^1$, and $\epsilon^2$ indicate the zeroth-, first-, and second-order contributions from the Taylor expansion.

For the solution jump condition at $x_i\in\mathcal S_{\Gamma_d}$, we define
\begin{equation}
\mathcal{R}_{\Gamma_d,p}
=
u_N^+-u_N^--w(x_i)
+
\epsilon\left(u_p^+-u_p^-\right).
\end{equation}
For the flux jump condition at $x_i\in\mathcal S_{\Gamma_n}$, the truncated perturbation residual is
\begin{equation}
\begin{aligned}
\mathcal{R}_{\Gamma_n,p}
=&
\epsilon^0
\left[
\beta_N^+\partial_{\mathbf{n}}u_N^+
-
\beta_N^-\partial_{\mathbf{n}}u_N^-
-
v(x_i)
\right]
\\
&+
\epsilon^1
\left[
\beta_N^+\partial_{\mathbf{n}}u_p^+
+
\beta_{u,N}^+u_p^+\partial_{\mathbf{n}}u_N^+
-
\beta_N^-\partial_{\mathbf{n}}u_p^-
-
\beta_{u,N}^-u_p^-\partial_{\mathbf{n}}u_N^-
\right]
\\
&+
\epsilon^2
\left[
\beta_{u,N}^+u_p^+\partial_{\mathbf{n}}u_p^+
+
\frac12\beta_{uu,N}^+(u_p^+)^2\partial_{\mathbf{n}}u_N^+
-
\beta_{u,N}^-u_p^-\partial_{\mathbf{n}}u_p^-
-
\frac12\beta_{uu,N}^-(u_p^-)^2\partial_{\mathbf{n}}u_N^-
\right].
\end{aligned}
\end{equation}
On the boundary $x_i\in\mathcal S_{\partial\Omega}\cap\partial\Omega^\pm$, we define
\begin{equation}
\mathcal{R}_{\partial\Omega,p}
=
u_N^\pm-g(x_i)+\epsilon u_p^\pm.
\end{equation}

The Gauss--Newton Jacobian is the block matrix
\begin{equation}
\mathcal{J}_p
=
\frac{\partial \mathcal{F}_p}{\partial\boldsymbol{\gamma}}
=
\frac 1\epsilon
\begin{pmatrix}
	\widehat{\mathcal{J}}_{\Omega^+,p}&0\\
	0&\widehat{\mathcal{J}}_{\Omega^-,p}\\
	\widehat{\mathcal{J}}_{\Gamma_d,p}^+&\widehat{\mathcal{J}}_{\Gamma_d,p}^-\\
	\widehat{\mathcal{J}}_{\Gamma_n,p}^+&\widehat{\mathcal{J}}_{\Gamma_n,p}^-\\
	\widehat{\mathcal{J}}_{\partial\Omega,p}^+&0\\
	0&\widehat{\mathcal{J}}_{\partial\Omega,p}^-
\end{pmatrix},
\end{equation}
where the hatted blocks denote derivatives of the unnormalized residual components. The actual Jacobian used in the weighted least-squares problem is obtained by applying the same row scaling as in the residual vector.

For $x_i\in\mathcal S_{\Omega^+}$, the $i,j$ entry of $\widehat{\mathcal J}_{\Omega^+,p}\in\mathbb R^{N_{\Omega^+}\times m_p}$ is
\begin{equation}
\begin{aligned}
(\widehat{\mathcal{J}}_{\Omega^+,p})_{i,j}
&=
\frac{\partial \mathcal R_p^+(x_i)}{\partial \gamma_j^+}
\\
&=
\epsilon
\left[
\nabla\cdot\left(
\beta_N^+\nabla\psi_j^+
+
\beta_{u,N}^+\psi_j^+\nabla u_N^+
\right)
+
f_{u,N}^+\psi_j^+
\right]_{x=x_i}
\\
&\quad+
\epsilon^2
\left[
\nabla\cdot\left(
\beta_{u,N}^+\psi_j^+\nabla u_p^+
+
\beta_{u,N}^+u_p^+\nabla\psi_j^+
+
\beta_{uu,N}^+u_p^+\psi_j^+\nabla u_N^+
\right)
+
f_{uu,N}^+u_p^+\psi_j^+
\right]_{x=x_i}.
\end{aligned}
\end{equation}
The block $\widehat{\mathcal J}_{\Omega^-,p}$ is obtained by replacing $(+)$ with $(-)$ and $\psi_j^+$ with $\psi_j^-$.

For $x_i\in\mathcal S_{\Gamma_d}$, the solution-jump blocks are
\begin{equation}
(\widehat{\mathcal{J}}_{\Gamma_d,p}^+)_{i,j}
=
\epsilon\,\psi_j^+(x_i),
\qquad
(\widehat{\mathcal{J}}_{\Gamma_d,p}^-)_{i,j}
=
-\epsilon\,\psi_j^-(x_i).
\end{equation}
For $x_i\in\mathcal S_{\Gamma_n}$, the flux-jump blocks are
\begin{equation}
\begin{aligned}
(\widehat{\mathcal{J}}_{\Gamma_n,p}^+)_{i,j}
=&
\epsilon
\left[
\beta_N^+\partial_{\mathbf n}\psi_j^+
+
\beta_{u,N}^+\psi_j^+\partial_{\mathbf n}u_N^+
\right]_{x=x_i}
\\
&+
\epsilon^2
\left[
\beta_{u,N}^+
\left(
\psi_j^+\partial_{\mathbf n}u_p^+
+
u_p^+\partial_{\mathbf n}\psi_j^+
\right)
+
\beta_{uu,N}^+u_p^+\psi_j^+\partial_{\mathbf n}u_N^+
\right]_{x=x_i},
\end{aligned}
\end{equation}
and
\begin{equation}
\begin{aligned}
(\widehat{\mathcal{J}}_{\Gamma_n,p}^-)_{i,j}
=&
-\epsilon
\left[
\beta_N^-\partial_{\mathbf n}\psi_j^-
+
\beta_{u,N}^-\psi_j^-\partial_{\mathbf n}u_N^-
\right]_{x=x_i}
\\
&-
\epsilon^2
\left[
\beta_{u,N}^-
\left(
\psi_j^-\partial_{\mathbf n}u_p^-
+
u_p^-\partial_{\mathbf n}\psi_j^-
\right)
+
\beta_{uu,N}^-u_p^-\psi_j^-\partial_{\mathbf n}u_N^-
\right]_{x=x_i}.
\end{aligned}
\end{equation}
For $x_i\in\mathcal S_{\partial\Omega}\cap\partial\Omega^\pm$,
\begin{equation}
(\widehat{\mathcal J}_{\partial\Omega,p}^\pm)_{i,j}
=
\frac{\partial \mathcal R_{\partial\Omega,p}(x_i)}{\partial \gamma_j^\pm}
=
\epsilon\,\psi_j^\pm(x_i).
\end{equation}

\par With the perturbation-correction residual vector $\mathcal{F}_p$ defined, the subproblem \eqref{eq:perturbation optimize} is solved by the same Gauss--Newton procedure used in the primary LRaNN stage. The complete two-stage framework is summarized below.
\begin{algorithm}[h]
\caption{LRaNN--PC Framework}\label{alg:two-stage}
\begin{algorithmic}[1]
	\STATE {Initialize the primary LRaNNs: $u_N^\pm=\sum_{j=1}^m\alpha_j^\pm\phi^\pm\left(\mathbf{w}_j^\pm \cdot \mathbf{x} + b_j^\pm\right)$ for $\mathbf{x}\subseteq \bar{\Omega}$.}
	\FOR {$k=1$ to $M$}
	\STATE {Evaluate the residual vector $\mathcal{F}(\boldsymbol{\alpha})$ and the Jacobian matrix $\mathcal{J}(\boldsymbol{\alpha})$ for \eqref{model of the problem}.}
	\STATE {Solve the normal equations and update the output coefficients $\boldsymbol{\alpha}^{\{k+1\}}=\boldsymbol{\alpha}^{\{k\}}+\delta\boldsymbol{\alpha}$.}
	\STATE {Stop if the residual criterion \eqref{eq:Netwon_converge} is satisfied.}
	\ENDFOR
	\STATE{Set the primary approximation $u_N^\pm$.}
	\STATE {Initialize the perturbation-correction networks: $u_p^\pm=\sum_{j=1}^{m_p}\gamma_j^\pm\psi^\pm\left(\mathbf{w}_{pj}^{\pm} \cdot \mathbf{x} + b_{pj}^{\pm}\right)$ for $\mathbf{x}\subseteq \bar{\Omega}$.}
	\FOR {$k=1$ to $M$}
	\STATE {Evaluate the perturbation residual $\mathcal{F}_p(\boldsymbol{\gamma})$ and Jacobian $\mathcal{J}_p(\boldsymbol{\gamma})$.}
	\STATE {Solve the perturbation-correction normal equations and update $\boldsymbol{\gamma}^{\{k+1\}}=\boldsymbol{\gamma}^{\{k\}}+\delta\boldsymbol{\gamma}$.}
	\STATE {Stop when the perturbation residual stagnates or reaches the prescribed tolerance.}
	\ENDFOR
	\STATE {Return the corrected approximation $u_h^\pm=u_N^\pm(\boldsymbol{\alpha})+\epsilon u_p^\pm(\boldsymbol{\gamma})$.}
\end{algorithmic}
\end{algorithm}
\noindent
This two-stage framework uses the primary LRaNN stage to capture the dominant solution structure and the perturbation-correction stage to reduce the residual components left by the primary approximation.
\begin{remark}
The residual vector and Jacobian can be assembled either from explicit analytic derivatives of the randomized basis functions or by automatic differentiation. In our setting, the fixed single-hidden-layer representation makes analytic differentiation straightforward and avoids the overhead associated with repeated automatic-differentiation passes.
\end{remark}

\section{Error Analysis}\label{sec:error analysis}

This section derives a residual-controlled estimate for smooth LRaNN approximations of the model problem considered in this paper under the stated sufficient assumptions. The analysis is carried out in a broken $H^1$ seminorm, which is natural for interface problems with prescribed jumps across $\Gamma$. The estimate links the error to the continuous least-squares residuals and then to their collocation-based discrete counterparts.

\subsection{Residual-controlled error estimate}
\label{subsec:generalization error}

Let $u_*=(u_*^+,u_*^-)$ be a smooth network approximation to the weak solution $u_g$ to which the residual estimate is applied; in particular, $u_*$ may be the primary approximation $u_N$ or the corrected approximation $u_h=u_N+\epsilon u_p$. We write $e_*^\pm:=u_g^\pm-u_*^\pm$ and measure the error in the broken $H^1$ seminorm
\begin{equation}
	\label{eq:broken-seminorm}
	\|\nabla_h e_*\|_{L^2(\Omega)}^2
	:=\|\nabla e_*^+\|_{L^2(\Omega^+)}^2+\|\nabla e_*^-\|_{L^2(\Omega^-)}^2 .
\end{equation}
For brevity we set $A^\pm(x,\xi):=\beta^\pm(x,\xi)\nabla \xi$. Using the residual operators introduced in
Sec~\ref{subsec:LS} with $u_N$ replaced by the generic approximation $u_*$,
\begin{equation}
	\label{eq:residuals-generic}
	\begin{aligned}
		&\mathcal R_{\Omega^\pm}(u_*)=\nabla\cdot A^\pm(x,u_*^\pm)+f^\pm(x,u_*^\pm),
		\qquad
		\mathcal R_{\partial\Omega}(u_*)=u_*^\pm-g,
		\\
		&\mathcal R_{\Gamma_d}(u_*)=u_*^+-u_*^--w,
		\qquad
		\mathcal R_{\Gamma_n}(u_*)=A^+(x,u_*^+)\cdot\mathbf n-A^-(x,u_*^-)\cdot\mathbf n-v,
	\end{aligned}
\end{equation}
we define the continuous residual indicator
\begin{equation}
	\label{eq:eta-cont}
	\eta(u_*):=
	\|\mathcal R_{\Omega^+}\|_{L^2(\Omega^+)}^2
	+\|\mathcal R_{\Omega^-}\|_{L^2(\Omega^-)}^2
	+\|\mathcal R_{\partial\Omega}\|_{L^2(\partial\Omega)}^2
	+\|\mathcal R_{\Gamma_d}\|_{L^2(\Gamma)}^2
	+\|\mathcal R_{\Gamma_n}\|_{L^2(\Gamma)}^2 .
\end{equation}
The terms in \eqref{eq:eta-cont} coincide, up to the positive weights $\omega_\bullet$, with those of
the least-squares functional \eqref{eq:LS-formulation}. Because these weights are positive constants, the
weighted and unweighted residual indicators are equivalent up to the ratio of the weights, so the
weights affect the constants in the estimates below but not their form.

Throughout this subsection we use the following standing assumptions. The diffusion operator is strongly monotone with constant $\lambda:=\min\{\lambda^+,\lambda^-\}>0$, and the source term
$f^\pm$ is one-sided monotone in $u$, as in \eqref{eq:strong-monotonicity} and \eqref{eq:one-sided-monotonicity}. The estimate is applied to smooth network approximations $u_*$ whose ranges, together with the range of $u_g$, are contained in the corresponding monotonicity intervals $\mathcal I^\pm$.
For piecewise $H^1$ functions $z=(z^+,z^-)$ the broken Friedrichs inequality
\begin{equation}
	\label{eq:broken-friedrichs-cont}
	\|z\|_{L^2(\Omega)}
	\le C_F\big(\|\nabla_h z\|_{L^2(\Omega)}+\|z\|_{L^2(\partial\Omega)}+\|\llbracket z\rrbracket\|_{L^2(\Gamma)}\big)
\end{equation}
holds~\cite{brenner2003poincare}, with $C_F$ depending only on $\Omega^\pm$, $\Gamma$, and the Dirichlet boundary. The exact solution $u_g$ is the weak solution of Assumption~\ref{ass:weak-existence}, a piecewise $H^1$ field, and the approximation $u_*$ considered in the estimate is a smooth network. The flux regularity required in Assumption~\ref{ass:weak-existence} is consistent with standard subdomain elliptic regularity away from corners~\cite{li2000gradient,li2003estimates,dong2019gradient}. The normal flux differences
\begin{equation}
	\label{eq:flux-diff}
	B_{\partial\Omega}:=\big(A(u_g)-A(u_*)\big)\cdot\mathbf n \ \text{ on }\partial\Omega,
	\qquad
	B_\Gamma^\pm:=\big(A^\pm(u_g)-A^\pm(u_*)\big)\cdot\mathbf n \ \text{ on }\Gamma,
\end{equation}
are then well defined, and we assume them uniformly bounded in $L^2$, i.e.\ there exists $M>0$ such that
$\|B_{\partial\Omega}\|_{L^2(\partial\Omega)}+\|B_\Gamma^+\|_{L^2(\Gamma)}+\|B_\Gamma^-\|_{L^2(\Gamma)}\le M$.

\begin{theorem}
	\label{thm:residual-bound}
	Under the standing assumptions above, there exists a constant $C>0$, depending only on
$\Omega^\pm$, $\Gamma$, $\lambda$, $C_F$, and $M$, such that any such smooth network approximation $u_*$
satisfies
	\begin{equation}
		\label{eq:continuous-error-bound}
		\|\nabla_h(u_g-u_*)\|_{L^2(\Omega)}^2
		\le C\big(\eta(u_*)+\eta(u_*)^{1/2}\big).
	\end{equation}
\end{theorem}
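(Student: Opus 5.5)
We test the error equation with the error itself and integrate by parts subdomain-wise. Set $e=e_*=u_g-u_*$. On each $\Omega^\pm$, strong monotonicity \eqref{eq:strong-monotonicity} gives
\[
\lambda\|\nabla_h e\|_{L^2(\Omega)}^2\le \sum_\pm\int_{\Omega^\pm}\big(A^\pm(x,u_g^\pm)-A^\pm(x,u_*^\pm)\big)\cdot\nabla e^\pm\,dx .
\]
Because the fluxes $A^\pm(u_g)$ lie in $H(\mathrm{div},\Omega^\pm)$ with $L^2$ normal traces (Assumption~\ref{ass:weak-existence}), and $u_*$ is smooth, Green's formula applies on each subdomain. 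The right-hand side then equals
\[
-\sum_\pm\int_{\Omega^\pm}\nabla\cdot\big(A^\pm(u_g)-A^\pm(u_*)\big)\,e^\pm\,dx
\]
plus boundary integrals over $\partial\Omega$ and $\Gamma$, with orientation chosen so that $\mathbf n$ points from $\Omega^+$ to $\Omega^-$.

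\textbf{Volume term.} The weak solution satisfies $-\nabla\cdot A^\pm(u_g)=f^\pm(u_g)$ in $L^2$, and by \eqref{eq:residuals-generic} we have $\nabla\cdot A^\pm(u_*)=\mathcal R_{\Omega^\pm}-f^\pm(u_*)$. The integrand therefore becomes
\[
\big(f^\pm(u_g)-f^\pm(u_*)\big)e^\pm+\mathcal R_{\Omega^\pm}e^\pm .
\]
The first part is $\le 0$ by the one-sided monotonicity \eqref{eq:one-sided-monotonicity}, since both ranges lie in $\mathcal I^\pm$. The second part is bounded by Cauchy--Schwarz and Young by $\frac{1}{2\kappa}\eta+\frac{\kappa}{2}\|e\|_{L^2(\Omega)}^2$.

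\textbf{Outer boundary term.} This is $\int_{\partial\Omega}B_{\partial\Omega}\,e$, where $e=g-u_*=-\mathcal R_{\partial\Omega}$ on $\partial\Omega$. It is bounded by $M\eta^{1/2}$. This is the step that forces the $\eta^{1/2}$ term in \eqref{eq:continuous-error-bound}: the flux of $u_g-u_*$ on the boundary is not itself a residual, so we can only use the a priori bound $M$.

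\textbf{Interface term.} Write $e^\pm$ on $\Gamma$ through the average and the jump: $\llbracket e\rrbracket=w-(u_*^+-u_*^-)=-\mathcal R_{\Gamma_d}$. Split
\[
B^+_\Gamma e^+-B^-_\Gamma e^-=\llbracket B_\Gamma\rrbracket\,e^- + B_\Gamma^+\llbracket e\rrbracket .
\]
Here $\llbracket B_\Gamma\rrbracket=v-(A^+(u_*)-A^-(u_*))\cdot\mathbf n=-\mathcal R_{\Gamma_n}$.
\begin{itemize}
\item The term $B_\Gamma^+\llbracket e\rrbracket$ is bounded by $M\eta^{1/2}$.
\item The term $\int_\Gamma\mathcal R_{\Gamma_n}e^-$ requires a trace of $e^-$. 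We use the trace inequality on $\Omega^-$,
\[
\|e^-\|_{L^2(\Gamma)}\le C_{\rm tr}\big(\|e^-\|_{L^2(\Omega^-)}+\|\nabla e^-\|_{L^2(\Omega^-)}\big),
\]
followed by Young's inequality. Alternatively, a symmetric split using averages works equally well.
\end{itemize}

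\textbf{Absorption.} The full $L^2$ norm of $e$ is controlled by the broken Friedrichs inequality \eqref{eq:broken-friedrichs-cont}:
\[
\|e\|_{L^2}\le C_F\big(\|\nabla_h e\|+\|\mathcal R_{\partial\Omega}\|_{L^2(\partial\Omega)}+\|\mathcal R_{\Gamma_d}\|_{L^2(\Gamma)}\big)\le C_F\big(\|\nabla_h e\|+2\eta^{1/2}\big).
\]
Collecting terms gives
\[
\lambda\|\nabla_h e\|^2\le \kappa C\|\nabla_h e\|^2+C_\kappa\,\eta+2M\eta^{1/2}.
\]
Choosing $\kappa$ small enough that $\kappa C\le\lambda/2$ absorbs the gradient term on the left and yields \eqref{eq:continuous-error-bound}. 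The resulting constant depends only on $\lambda$, $C_F$, $M$, and the trace constants of $\Omega^\pm$ and $\Gamma$.

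The main obstacle is the justification of subdomain integration by parts with $u_g$ only piecewise $H^1$. Specifically, we need the pairings $\langle A(u_g)\cdot\mathbf n,e\rangle$ to be genuine $L^2$ integrals, rather than $H^{-1/2}$--$H^{1/2}$ dualities, so that the boundary and interface terms can be estimated with $L^2$ residuals. This is exactly what the assumed $L^2$ normal-trace regularity and the bound $M$ provide. The careful bookkeeping of signs in the interface splitting is routine by comparison.
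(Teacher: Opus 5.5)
Your proposal is correct and follows the paper's proof essentially step for step. It uses the same error equation tested with $e_*$, strong monotonicity together with the one-sided monotonicity of $f^\pm$, the same interface split $B_\Gamma^+e_*^+-B_\Gamma^-e_*^-=B_\Gamma^+\llbracket e_*\rrbracket+(B_\Gamma^+-B_\Gamma^-)e_*^-$ with the $M\eta^{1/2}$ bounds for the outer-boundary and jump pairings, and trace, broken Friedrichs and Young for the remaining terms before absorption; the only cosmetic difference is that you apply Young against $\|e_*\|_{L^2(\Omega)}$ first and invoke the Friedrichs inequality at the end, whereas the paper inserts it directly into the Cauchy--Schwarz estimate.
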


\begin{proof}
	Since $u_g$ is the weak solution of Assumption~\ref{ass:weak-existence}, its 
	flux satisfies $A^\pm(x,u_g^\pm)\in H(\mathrm{div},\Omega^\pm)$ with 
	$-\nabla\cdot A^\pm(x,u_g^\pm)=f^\pm(x,u_g^\pm)$ in $L^2(\Omega^\pm)$. Subtracting 
	the corresponding identity for the smooth approximation $u_*^\pm$ gives, in 
	$L^2(\Omega^\pm)$,
	\begin{equation}
		\label{eq:error-equation}
		-\nabla\cdot\big(A^\pm(x,u_g^\pm)-A^\pm(x,u_*^\pm)\big)
		=f^\pm(x,u_g^\pm)-f^\pm(x,u_*^\pm)+\mathcal R_{\Omega^\pm}(u_*).
	\end{equation}
	Testing \eqref{eq:error-equation} with $e_*^\pm$ and applying Green's formula in 
	$H(\mathrm{div},\Omega^\pm)$~\cite{girault2012finite}, which the $L^2$ normal flux traces of the standing 
	assumptions license, then summing over $\pm$ with the convention 
	$\mathbf n^+=\mathbf n$ and $\mathbf n^-=-\mathbf n$ on $\Gamma$, yields
	\begin{equation}
		\label{eq:energy-identity}
		\sum_\pm\int_{\Omega^\pm}\!\big(A^\pm(u_g)-A^\pm(u_*)\big)\cdot\nabla e_*^\pm\,dx
		=\sum_\pm\int_{\Omega^\pm}\!\big(f^\pm(u_g)-f^\pm(u_*)\big)e_*^\pm\,dx
		+I_\Omega+I_{\partial\Omega}+I_\Gamma,
	\end{equation}
	where
	\begin{equation}
		\label{eq:I-terms}
		I_\Omega:=\sum_\pm\int_{\Omega^\pm}\!\mathcal R_{\Omega^\pm}(u_*)\,e_*^\pm\,dx,
		\quad
		I_{\partial\Omega}:=\int_{\partial\Omega}\!B_{\partial\Omega}\,e_*\,ds,
		\quad
		I_\Gamma:=\int_{\Gamma}\!\big(B_\Gamma^+e_*^+-B_\Gamma^-e_*^-\big)\,ds .
	\end{equation}
	By the one-sided monotonicity of $f^\pm$, the first term on the right-hand side of
	\eqref{eq:energy-identity} is non-positive, while the strong monotonicity of the diffusion operator
	bounds the left-hand side from below by $\lambda\|\nabla_h e_*\|_{L^2(\Omega)}^2$. Hence
	\begin{equation}
		\label{eq:coercivity}
		\lambda\|\nabla_h e_*\|_{L^2(\Omega)}^2\le I_\Omega+I_{\partial\Omega}+I_\Gamma .
	\end{equation}
	Because $u_g$ matches the boundary and interface data exactly, the error traces satisfy 
	\begin{equation}
		\label{eq:trace-identities}
		e_*|_{\partial\Omega}=-\mathcal R_{\partial\Omega}(u_*),
		\qquad
		\llbracket e_*\rrbracket=e_*^+-e_*^-=-\mathcal R_{\Gamma_d}(u_*),
		\qquad
		B_\Gamma^+-B_\Gamma^-=-\mathcal R_{\Gamma_n}(u_*).
	\end{equation}
	We bound the three terms in \eqref{eq:coercivity} separately; here $\rho>0$ is a free parameter to be
	fixed below, and $C_\rho$ denotes a generic constant depending on $\rho$, $C_F$, and the data.

	\emph{Interior term.} By the Cauchy--Schwarz inequality, the broken Friedrichs inequality
	\eqref{eq:broken-friedrichs-cont}, and the trace identities \eqref{eq:trace-identities},
	\begin{equation}
		\label{eq:I-Omega-bound}
		|I_\Omega|
		\le\big(\|\mathcal R_{\Omega^+}\|+\|\mathcal R_{\Omega^-}\|\big)
		C_F\big(\|\nabla_h e_*\|+\|\mathcal R_{\partial\Omega}\|+\|\mathcal R_{\Gamma_d}\|\big)
		\le\rho\|\nabla_h e_*\|_{L^2(\Omega)}^2+C_\rho\,\eta(u_*).
	\end{equation}
	All residuals enter quadratically, so $I_\Omega$ contributes only to $\eta(u_*)$.

	\emph{Boundary term.} Using $e_*|_{\partial\Omega}=-\mathcal R_{\partial\Omega}$ and the uniform flux
	bound $\|B_{\partial\Omega}\|\le M$,
	\begin{equation}
		\label{eq:I-bdry-bound}
		|I_{\partial\Omega}|
		=\Big|\int_{\partial\Omega}B_{\partial\Omega}\,\mathcal R_{\partial\Omega}\,ds\Big|
		\le M\,\|\mathcal R_{\partial\Omega}\|_{L^2(\partial\Omega)}
		\le M\,\eta(u_*)^{1/2}.
	\end{equation}
	Since $B_{\partial\Omega}$ is a trace of the flux error that is controlled only through the uniform
	bound $M$ and not by the broken $H^1$ seminorm $\|\nabla_h e_*\|$, this pairing is linear in
	the boundary residual and cannot be absorbed into $\|\nabla_h e_*\|^2$; it produces the square-root
	contribution $\eta^{1/2}$.

	\emph{Interface term.} Adding and subtracting and using \eqref{eq:trace-identities},
	\begin{equation}
		\label{eq:I-gamma-split}
		I_\Gamma
		=\int_\Gamma B_\Gamma^+(e_*^+-e_*^-)\,ds+\int_\Gamma(B_\Gamma^+-B_\Gamma^-)e_*^-\,ds
		=-\int_\Gamma B_\Gamma^+\,\mathcal R_{\Gamma_d}\,ds-\int_\Gamma\mathcal R_{\Gamma_n}\,e_*^-\,ds .
	\end{equation}
	The first integral is estimated as in \eqref{eq:I-bdry-bound}, giving
	$M\|\mathcal R_{\Gamma_d}\|_{L^2(\Gamma)}\le M\,\eta(u_*)^{1/2}$. For the second integral, the trace
	inequality combined with \eqref{eq:broken-friedrichs-cont} yields
	$\|e_*^-\|_{L^2(\Gamma)}\le C\big(\|\nabla_h e_*\|+\|\mathcal R_{\partial\Omega}\|+\|\mathcal R_{\Gamma_d}\|\big)$,
	so that, by Young's inequality,
	\begin{equation}
		\label{eq:I-gamma-bound}
		|I_\Gamma|
		\le M\,\eta(u_*)^{1/2}
		+\rho\|\nabla_h e_*\|_{L^2(\Omega)}^2+C_\rho\,\eta(u_*).
	\end{equation}

	Substituting \eqref{eq:I-Omega-bound}, \eqref{eq:I-bdry-bound}, and \eqref{eq:I-gamma-bound} into \eqref{eq:coercivity} and
	choosing $\rho=\lambda/4$ to absorb the gradient terms into the left-hand side gives
	\begin{equation}
		\label{eq:coercivity-final}
		\frac{\lambda}{2}\|\nabla_h e_*\|_{L^2(\Omega)}^2
		\le C_\rho\,\eta(u_*)+2M\,\eta(u_*)^{1/2},
	\end{equation}
	which is \eqref{eq:continuous-error-bound}.
\end{proof}

It remains to connect \eqref{eq:continuous-error-bound} with the discrete loss that is actually
minimized. For a residual component $r_D$ on $D$, the collocation sets in
\eqref{eq:collocation points} define the discrete mean-square norm
\begin{equation}
	\label{eq:discrete-norm}
	\|r_D\|_{N,D}^2:=\frac{1}{N_D}\sum_{i=1}^{N_D}|r_D(x_i^D)|^2 ,
\end{equation}
the empirical average of $|r_D|^2$ over the $N_D$ collocation points. With $|D|$ the measure of $D$,
this average estimates the averaged continuous norm $|D|^{-1}\|r_D\|_{L^2(D)}^2$, and we assume the
quadrature consistency~\cite{mishra2023estimates}
\begin{equation}
	\label{eq:quadrature-consistency}
	\Big|\,|D|^{-1}\|r_D\|_{L^2(D)}^2-\|r_D\|_{N,D}^2\,\Big|\le C_{\mathrm{quad}}^D\,N_D^{-\theta_D},
	\qquad D\in\{\Omega^+,\Omega^-,\partial\Omega,\Gamma_d,\Gamma_n\}.
\end{equation}
Equivalently $\|r_D\|_{L^2(D)}^2\le|D|\big(\|r_D\|_{N,D}^2+C_{\mathrm{quad}}^D N_D^{-\theta_D}\big)$, so each
continuous residual norm is controlled by its discrete counterpart up to a quadrature error and the
fixed measure $|D|$. Let $\eta_N(u_*)$ be the discrete residual indicator obtained from
\eqref{eq:eta-cont} by replacing each $\|r_D\|_{L^2(D)}^2$ with $\|r_D\|_{N,D}^2$, and let
\begin{equation}
	\label{eq:quadrature-error}
	\mathcal E_N:=\sum_{D}C_{\mathrm{quad}}^D\,N_D^{-\theta_D}
\end{equation}
denote the accumulated quadrature error. Absorbing the fixed measures $|D|$ into the generic constant,
$\eta(u_*)\le C\big(\eta_N(u_*)+\mathcal E_N\big)$, and \eqref{eq:continuous-error-bound} yields
\begin{equation}
	\label{eq:final-residual-error-bound}
	\|\nabla_h(u_g-u_*)\|_{L^2(\Omega)}^2
	\le C\Big(\eta_N(u_*)+\mathcal E_N+\big(\eta_N(u_*)+\mathcal E_N\big)^{1/2}\Big).
\end{equation}

\subsection{Residual correction and the corrected residual bound}
\label{subsec:correction-bound}

Recall that the perturbation-correction functions $u_p^\pm$ are represented by the local randomized neural network ansatz introduced in Sec~\ref{subsec:pcm}. Once the hidden weights and biases are fixed, the correction depends linearly on the output coefficients. Hence, for the combined coefficient vector $\boldsymbol{\gamma}$, the scaled correction can be written as
\begin{equation}
	u_p(\boldsymbol{\gamma})=\Psi\boldsymbol{\gamma},
\end{equation}
where $\Psi$ denotes the fixed correction basis assembled from the $\psi^\pm$ activations over the two subdomains. The actual correction added to the primary approximation is
\begin{equation}
	\delta u(\boldsymbol{\gamma}) := \epsilon u_p(\boldsymbol{\gamma}) = \epsilon\Psi\boldsymbol{\gamma}.
\end{equation}
Thus $\epsilon$ is used as a normalization factor in the correction equation, while the quantity entering the corrected approximation is $\delta u$.

Let $\mathcal F(u_N)$ be the weighted discrete residual vector of the primary LRaNN approximation, defined by the same residual components and collocation sets as in \eqref{eq:discrete form}. The corresponding discrete residual indicator is
\begin{equation}
	\eta_N(u_N):=\|\mathcal F(u_N)\|_{\ell^2}^2 .
\end{equation}
Equivalently, $\eta_N(u_N)=2G(\boldsymbol{\alpha})$ is twice the weighted discrete least-squares loss in \eqref{eq:discrete form}. This weighted indicator and the unweighted indicator $\eta_N$ of Sec~\ref{subsec:generalization error} coincide up to the ratio of the extreme weights; we use the symbol $\eta_N$ for both and absorb the weight ratio into the generic constants. In the computations, the normalization parameter is chosen according to the primary residual scale,
\begin{equation}
	\epsilon=\|\mathcal F(u_N)\|_{\ell^2}
	=
	\eta_N(u_N)^{1/2}.
\end{equation}

Let $J_p$ denote the discrete Fr\'echet derivative of the residual vector at $u_N$, restricted to the correction space:
\begin{equation}
	J_p\boldsymbol{\gamma}
	=
	D\mathcal F(u_N)[\Psi\boldsymbol{\gamma}] .
\end{equation}
Keeping only the first-order term in the residual expansion gives the normalized perturbation residual
\begin{equation}
	\mathcal F_p^{(1)}(\boldsymbol{\gamma})
	=
	\epsilon^{-1}\mathcal F(u_N)
	+
	J_p\boldsymbol{\gamma}.
\end{equation}
The associated perturbation-correction objective is
\begin{equation}
	G_p^{(1)}(\boldsymbol{\gamma})
	=
	\frac12
	\|\mathcal F_p^{(1)}(\boldsymbol{\gamma})\|_{\ell^2}^2 .
\end{equation}
Since $\mathcal F_p^{(1)}$ is affine in $\boldsymbol{\gamma}$, the first-order perturbation-correction subproblem is a convex least-squares problem. Its Hessian is
\begin{equation}
	\nabla_{\boldsymbol{\gamma}}^2G_p^{(1)}
	=
	J_p^TJ_p
	\succeq 0 .
\end{equation}
If $J_p$ has full column rank, then $J_p^TJ_p$ is positive definite and the first-order subproblem is strictly convex. This rank condition is a discrete nondegeneracy condition for the correction basis under the chosen collocation set.

Including the second variation of the residual gives the second-order normalized perturbation residual
\begin{equation}
	\mathcal F_p^{(2)}(\boldsymbol{\gamma})
	=
	\epsilon^{-1}\mathcal F(u_N)
	+
	J_p\boldsymbol{\gamma}
	+
	\epsilon Q_p(\boldsymbol{\gamma},\boldsymbol{\gamma}),
\end{equation}
where $Q_p$ denotes the second variation of the discrete residual operator restricted to the correction space. The corresponding objective is
\begin{equation}
	G_p^{(2)}(\boldsymbol{\gamma})
	=
	\frac12
	\|\mathcal F_p^{(2)}(\boldsymbol{\gamma})\|_{\ell^2}^2 .
\end{equation}
Unlike the first-order subproblem, the second-order correction problem is generally nonlinear and nonconvex with respect to the correction coefficients. We therefore do not use convexity for the second-order model. The Gauss--Newton method is used as a local nonlinear least-squares solver; the rapid convergence observed in the experiments should be interpreted as a numerical property of the tested correction problems, not as a global convergence guarantee.

Let $\boldsymbol{\gamma}_*$ be the coefficient vector obtained by solving the perturbation-correction subproblem, and define
\begin{equation}
	u_h=u_N+\epsilon u_p(\boldsymbol{\gamma}_*).
\end{equation}
Consistent with the scaled perturbation residual vector in \eqref{eq:residual_p}, define the normalized perturbation residual indicator
\begin{equation}
	\eta_p(\boldsymbol{\gamma}_*)
	:=
	\|\mathcal F_p^{(k)}(\boldsymbol{\gamma}_*)\|_{\ell^2}^2,
	\qquad k=1 \text{ or } 2,
\end{equation}
depending on whether the first- or second-order perturbation model is used. The corresponding unscaled truncated residual model at the corrected approximation satisfies
\begin{equation}
	\mathcal F^{(k)}(u_h)
	=
	\epsilon \mathcal F_p^{(k)}(\boldsymbol{\gamma}_*).
\end{equation}
Therefore, the discrete residual indicator associated with the truncated perturbation model is
\begin{equation}
	\eta_N^{(k)}(u_h)
	=
	\|\mathcal F^{(k)}(u_h)\|_{\ell^2}^2
	=
	\epsilon^2
	\|\mathcal F_p^{(k)}(\boldsymbol{\gamma}_*)\|_{\ell^2}^2 .
\end{equation}
Using $\epsilon^2=\eta_N(u_N)$, this gives
\begin{equation}
	\label{eq:pc-product-relation}
	\eta_N^{(k)}(u_h)
	=
	\eta_N(u_N)\eta_p(\boldsymbol{\gamma}_*).
\end{equation}

The relation \eqref{eq:pc-product-relation} involves the truncated model residual $\mathcal F^{(k)}(u_h)$, whereas Theorem~\ref{thm:residual-bound} controls the error through the exact residual $\mathcal F(u_h)$. Let
\begin{equation}
	R^{(k)}:=\mathcal F(u_h)-\mathcal F^{(k)}(u_h),
	\qquad
	\tau_k:=\|R^{(k)}\|_{\ell^2}.
\end{equation}
The Taylor remainder is governed by the size of the actual correction 
$\delta u=\epsilon u_p(\boldsymbol{\gamma}_*)$, not by $\epsilon$ alone. Under the smoothness assumptions on $\beta^\pm$ and $f^\pm$, Taylor's theorem gives
\begin{equation}
	\tau_1=O(\|\delta u\|^2)
	\quad\text{for the first-order model}, 
	\qquad
	\tau_2=O(\|\delta u\|^3)
	\quad\text{for the second-order model}.
\end{equation}
No boundedness of $u_p$ independent of $\epsilon$ is required in this statement.

By the triangle inequality,
\begin{equation}
	\label{eq:pc-exact-indicator}
	\eta_N(u_h)^{1/2}
	=
	\|\mathcal F^{(k)}(u_h)+R^{(k)}\|_{\ell^2}
	\le
	\epsilon\,\eta_p(\boldsymbol{\gamma}_*)^{1/2}
	+
	\tau_k .
\end{equation}
Applying Theorem~\ref{thm:residual-bound} to the corrected approximation and using
\eqref{eq:pc-exact-indicator} yields, after absorbing harmless constants,
\begin{equation}
	\label{eq:pc-final-bound}
	\|\nabla_h(u_g-u_h)\|_{L^2(\Omega)}^2
	\le
	C\Big(
	\epsilon^2\,\eta_p(\boldsymbol{\gamma}_*)
	+\tau_k^2
	+\mathcal E_N
	+
	\big(\epsilon^2\,\eta_p(\boldsymbol{\gamma}_*)+\tau_k^2+\mathcal E_N\big)^{1/2}
	\Big).
\end{equation}

Thus the perturbation-correction subproblem reduces the computable truncated-model residual contribution $\epsilon^2\eta_p(\boldsymbol{\gamma}_*)$. The exact residual-controlled bound also contains the quadrature error $\mathcal E_N$ and the truncation contribution $\tau_k$. Therefore, reducing the PC-stage residual improves the computable part of the bound when the quadrature and truncation terms are not dominant. This is the residual-reduction mechanism used to interpret the numerical results below; it is not a global convergence statement for the nonlinear least-squares algorithm.
\begin{remark}[Approximation, optimization, and quadrature contributions]\label{rmk:error-sources}
Estimate~\eqref{eq:final-residual-error-bound} controls the broken $H^1$ error through the discrete residual $\eta_N(u_*)$ and the quadrature error $\mathcal E_N$. Writing $\eta_N^\star$ for the smallest discrete residual attainable on the randomized trial space, we have $\eta_N(u_*)=\eta_N^\star+(\eta_N(u_*)-\eta_N^\star)$. The error therefore contains an approximation part $\eta_N^\star$, set by the trial space; an optimization part $\eta_N(u_*)-\eta_N^\star$, the residual stagnation left by the solver; and a quadrature part $\mathcal E_N$~\cite{mishra2022estimates,mishra2023estimates}. The LRaNN-PC method is designed to reduce the optimization-related residual left by the primary stage. This reduction is observed in the numerical experiments below, especially when the approximation error of the randomized trial space is small. For solutions that are smooth within each subdomain and well represented by the randomized trial space, $\eta_N^\star$ is small and the optimization part may dominate; this is the regime targeted here. For solutions with strong local singularities, $\eta_N^\star$ may be large, and reducing the optimization part cannot lower the error below the approximation floor; such problems require trial-space enrichment and lie outside the present scope.
\end{remark}

\section{Numerical examples}
\label{sec:4}
In this section, we evaluate the performance of the proposed LRaNN-PC framework through a series of quasi-linear interface problems.
In all experiments, LRaNN denotes the primary LRaNN stage: the primary approximation $u_N$ is obtained by minimizing the least-squares functional on a fixed randomized basis via the Gauss--Newton method introduced in Sec~\ref{subsec:Newton iterative}. LRaNN-PC denotes the perturbation-correction framework: after the primary LRaNN stage, the PC stage computes $u_p$ as introduced in Sec~\ref{subsec:pcm}, and returns the corrected approximation $u_h=u_N+\epsilon u_p$. Consequently, the comparison between LRaNN and LRaNN-PC quantifies the specific gain offered by the perturbation-correction stage over the standard randomized neural network.
The selected tests cover steady and unsteady cases, multiple and curved interfaces, different types of nonlinear diffusion, as well as scenarios with high contrast. The gradient-dependent and moving-interface tests are numerical extensions beyond the solution-dependent elliptic model used in the residual-controlled analysis.

To quantify the solution accuracy, let $u_g$ denote the exact or reference solution and let $U$ denote the numerical approximation being evaluated, that is, the primary approximation $u_N$ or the corrected approximation $u_h=u_N+\epsilon u_p$. The pointwise error fields shown in the figures below are the primary error $e_N=u_g-u_N$ and the corrected error $e_h=u_g-u_h$ introduced in Sec~\ref{subsec:pcm}. We report the primary discrete relative errors computed on test sets that are independent of the training collocation sets. 
On a uniformly distributed test set $\{x_i\}_{i=1}^N$, the discrete relative $L^2$ and $L^\infty$ errors are defined as
\begin{equation}
\label{eq:relative_l2_linf_errors}
	\erro_{L^2}(U)=\frac{\sqrt{\sum_{i=1}^N\left(u_g(x_i)-U(x_i)\right)^2}}{\sqrt{\sum_{i=1}^Nu_g(x_i)^2}}, \quad 
	\erro_{L^\infty}(U)=\frac{\max_{x_i}|u_g(x_i)-U(x_i)|}{\max_{x_i}|u_g(x_i)|}.
\end{equation}
For comparisons involving derivatives, we use the discrete relative broken $H^1$-seminorm error
\begin{equation}
\label{eq:relative_h1_error}
\erro_{H^1}(U)
=
\frac{
\left(
\sum_{i=1}^N |(u_g-U)_x(x_i)|^2+\sum_{i=1}^N |(u_g-U)_y(x_i)|^2
\right)^{1/2}
}{
\left(
\sum_{i=1}^N |(u_g)_x(x_i)|^2+\sum_{i=1}^N |(u_g)_y(x_i)|^2
\right)^{1/2}
}.
\end{equation}
When discrete absolute errors are reported, we use
\begin{equation}
\label{eq:absolute_errors}
E_{L^2}(U)=\sqrt{\sum_{i=1}^N\left(u_g(x_i)-U(x_i)\right)^2}, \quad
E_{L^\infty}(U)=\max_{x_i}|u_g(x_i)-U(x_i)|,
\end{equation}
\begin{equation}
\label{eq:absolute_h1_error}
E_{H^1}(U)=
\left(
\sum_{i=1}^N |(u_g-U)_x(x_i)|^2+\sum_{i=1}^N |(u_g-U)_y(x_i)|^2
\right)^{1/2}.
\end{equation}
The same notation is used on subdomain or interface sample sets when local errors are reported.

\subsection{Example 1}

We first consider a two-subdomain quasi-linear elliptic interface problem with homogeneous jump conditions, $w=v=0$. The computational domain is
\[
\Omega=[-1,1]\times[0,1],
\]
and the interface
\[
\Gamma=\{(x,y)\mid x=0\}
\]
splits $\Omega$ into the left subdomain $\Omega^-=[-1,0]\times[0,1]$ and the right subdomain $\Omega^+=[0,1]\times[0,1]$. The nonlinear diffusion coefficient is defined by
\[
\beta(x,u)=
\begin{cases}
1+u, & x\in\Omega^-,\\[1mm]
1+\dfrac12 u^2, & x\in\Omega^+,
\end{cases}
\]
and the source terms, boundary data, and interface data are generated from the exact solution
\[
u(x,y)=
\begin{cases}
x^2y^2, & x\in\Omega^-,\\[1mm]
e^{-x}x^2y^2, & x\in\Omega^+.
\end{cases}
\]
This example is used as a baseline two-subdomain test for the proposed perturbation-correction procedure.

In the primary LRaNN stage, each subdomain is approximated by an LRaNN $u_N^\pm$ with $m=100$ hidden neurons and $\tanh$ activation. The hidden weights and biases are independently sampled from $\mathcal N(0,1)$ and $\mathcal N(0,0.1^2)$, respectively. The Gauss--Newton iteration reaches a residual plateau after about five iterations (Fig~\ref{fig:stage1_comparison}), and the primary approximation stagnates with $\erro_{L^2}(u_N)$ at the $10^{-7}$ level. 

The perturbation-correction stage is then applied using an independent sinusoidal correction basis. Weights and biases of the correction-network $u_p^\pm$ are independently sampled from $\mathcal N(0,(5\pi)^2)$ and $\mathcal N(0,1)$, respectively. The perturbation parameter is set to the primary residual scale, $\epsilon=\|\mathcal F(u_N)\|_{\ell^2}$, unless otherwise stated. Table~\ref{tab:perturbation_basis_effect} reports the effect of the number of correction basis functions $m_p$, together with the effective rank $r$ of the correction Jacobian $\mathcal J_p$. For $m_p\le400$, the effective rank equals $2m_p$; over this range $\erro_{L^2}(u_h)$ decreases from $1.8983\times10^{-7}$ at $m_p=100$ to $2.4364\times10^{-10}$ at $m_p=400$, and $\erro_{L^\infty}(u_h)$ from $1.7860\times10^{-7}$ to $5.3896\times10^{-10}$. For $m_p\ge500$, the effective rank is smaller than $2m_p$ (for example, $968<1000$ at $m_p=500$ and $1773<4000$ at $m_p=2000$), and the error indicators level off: $\erro_{L^2}(u_h)$ decreases to $4.0712\times10^{-12}$ at $m_p=500$ and to $2.9054\times10^{-12}$ at $m_p=600$, then remains at about $2.9\times10^{-12}$ for $m_p\ge600$, while $\erro_{L^\infty}(u_h)$ remains at about $3.6\times10^{-12}$.

\begin{table}[h]
	\centering
	\small
	\setlength{\tabcolsep}{4.5pt}
	\caption{Effect of the correction basis size in Example~1. The primary LRaNN stage uses $m=100$ $\tanh$ neurons. The perturbation-correction function $u_p$ is constructed from $m_p$ $\sin$ basis functions using $35{,}752$ collocation points. In this basis-size study the perturbation parameter is fixed at $\epsilon=10^{-4}$; Table~\ref{tab:epsilon_robustness} shows that the corrected error is insensitive to this choice. Here $r$ is the effective rank of the correction Jacobian $\mathcal J_p$.}
	\label{tab:perturbation_basis_effect}
	\begin{tabular}{cccccc}
		\toprule
		$m_p$ & $\erro_{L^2}(u_h)$ & $\erro_{L^\infty}(u_h)$ & $r$ & Iter. & Residual norm \\
		\midrule
		100  & $1.8983\times 10^{-7}$  & $1.7860\times 10^{-7}$  & 200  & 3 & $1.1855\times 10^{-1}$ \\
		200  & $1.3988\times 10^{-7}$  & $2.2567\times 10^{-7}$  & 400  & 3 & $6.7436\times 10^{-2}$ \\
		400  & $2.4364\times 10^{-10}$ & $5.3896\times 10^{-10}$ & 800  & 3 & $1.9518\times 10^{-4}$ \\
		500  & $4.0712\times 10^{-12}$ & $6.5351\times 10^{-12}$ & 968  & 3 & $2.4224\times 10^{-6}$ \\
		600  & $2.9054\times 10^{-12}$ & $3.5908\times 10^{-12}$ & 1091 & 8 & $1.3174\times 10^{-7}$ \\
		800  & $2.9049\times 10^{-12}$ & $3.5960\times 10^{-12}$ & 1277 & 4 & $1.2992\times 10^{-7}$ \\
		1000 & $2.9051\times 10^{-12}$ & $3.6098\times 10^{-12}$ & 1402 & 4 & $1.2961\times 10^{-7}$ \\
		2000 & $2.9053\times 10^{-12}$ & $3.6441\times 10^{-12}$ & 1773 & 4 & $1.2909\times 10^{-7}$ \\
		\bottomrule
	\end{tabular}
\end{table}

Table~\ref{tab:epsilon_robustness} reports the corrected errors at $m_p=500$ as the normalization parameter $\epsilon$ varies from $10^{-6}$ to $10^{-1}$, including the default $\epsilon=\|\mathcal F(u_N)\|$. The relative errors remain at $\erro_{L^2}(u_h)\approx4.07\times10^{-12}$ and $\erro_{L^\infty}(u_h)\approx6.53\times10^{-12}$ over this range. The residual norm reported in the table is the normalized perturbation residual and therefore changes with this scaling.

\begin{table}[h]
	\centering
	\caption{Effect of the perturbation parameter $\epsilon$ in Example~1, at fixed $m_p=500$. The default choice is $\epsilon=\|\mathcal F(u_N)\|\approx2.4622\times10^{-3}$. The effective rank ($r=968$) and the iteration count ($3$) are identical for all rows.}
	\label{tab:epsilon_robustness}
	\begin{tabular}{cccc}
		\toprule
		$\epsilon$ & $\erro_{L^2}(u_h)$ & $\erro_{L^\infty}(u_h)$ & Residual norm \\
		\midrule
		$10^{-1}$              & $4.0711\times 10^{-12}$ & $6.5350\times 10^{-12}$ & $2.4224\times 10^{-9}$ \\
		$10^{-2}$              & $4.0711\times 10^{-12}$ & $6.5349\times 10^{-12}$ & $2.4224\times 10^{-8}$ \\
		$\|\mathcal F(u_N)\|$  & $4.0711\times 10^{-12}$ & $6.5350\times 10^{-12}$ & $9.8384\times 10^{-8}$ \\
		$10^{-4}$              & $4.0712\times 10^{-12}$ & $6.5351\times 10^{-12}$ & $2.4224\times 10^{-6}$ \\
		$10^{-6}$              & $4.0711\times 10^{-12}$ & $6.5350\times 10^{-12}$ & $2.4224\times 10^{-4}$ \\
		\bottomrule
	\end{tabular}
\end{table}

\begin{figure}[t]
	\centering
	\begin{subfigure}[t]{0.60\textwidth}
		\centering
		\includegraphics[width=\textwidth]{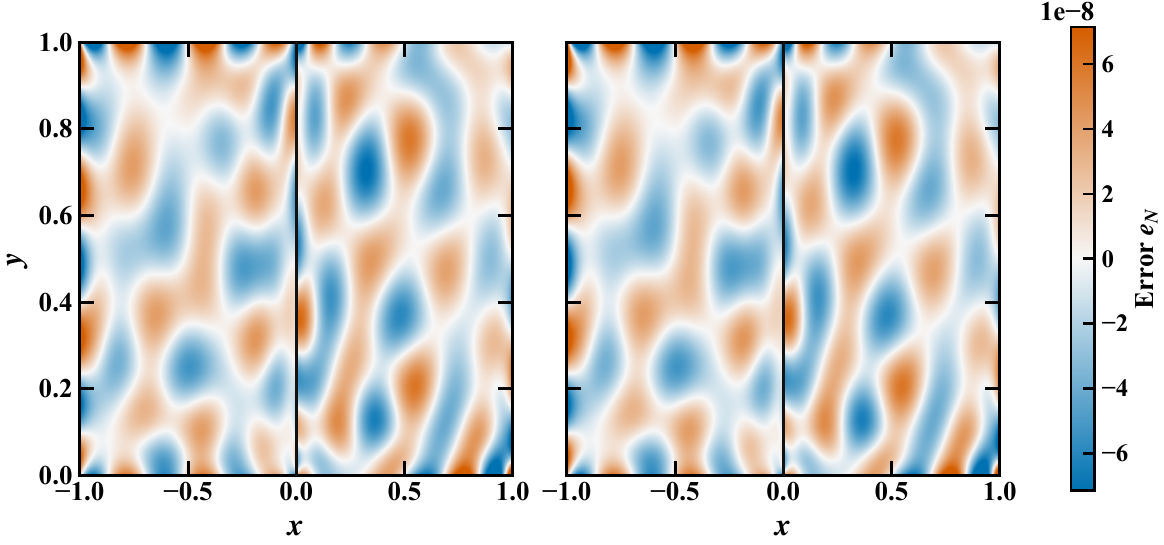}
		\caption{Left: primary error $e_N$. Right: correction $\epsilon u_p$.}
		\label{fig:stage1_error}
	\end{subfigure}\hfill
	\begin{subfigure}[t]{0.36\textwidth}
		\centering
		\includegraphics[width=\textwidth]{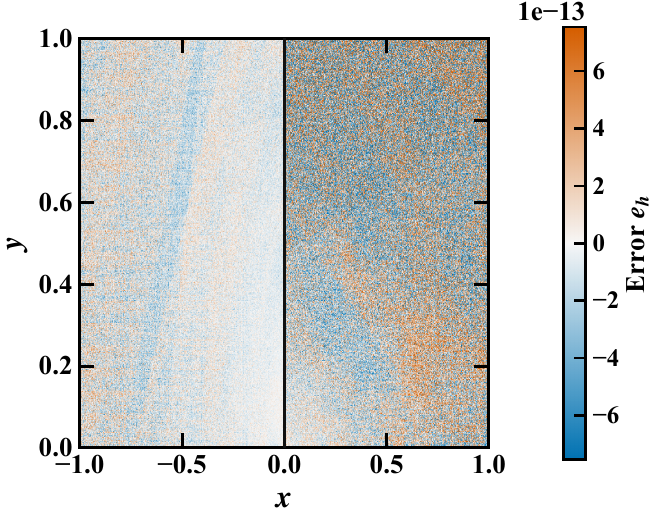}
		\caption{Pointwise error $e_h$.}
		\label{fig:perturbation_solution}
	\end{subfigure}

	\par\medskip

	\begin{subfigure}[t]{0.52\textwidth}
		\centering
		\includegraphics[width=\textwidth]{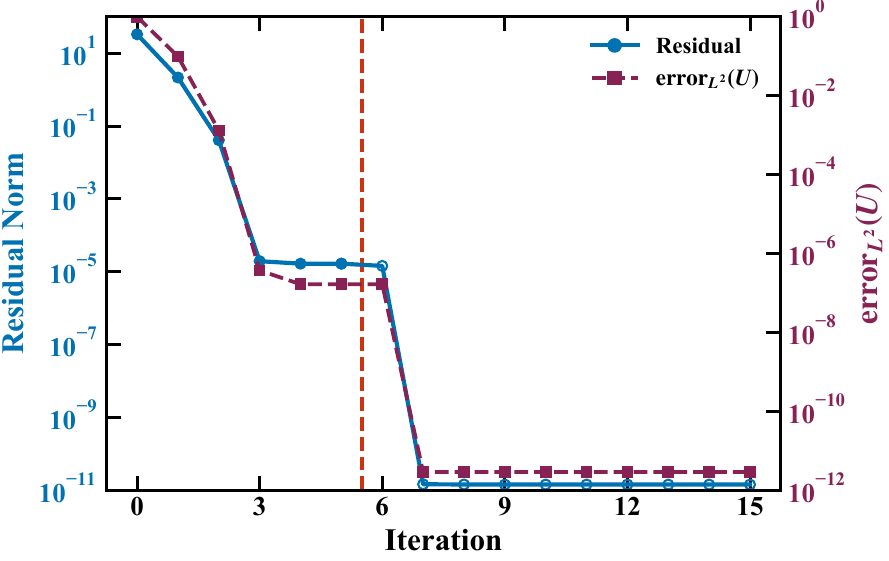}
		\caption{Residual norm and $\erro_{L^2}(U)$ versus iteration.}
		\label{fig:residual_vs_iteration}
	\end{subfigure}\hfill
	\begin{subfigure}[t]{0.435\textwidth}
		\centering
		\includegraphics[width=\textwidth]{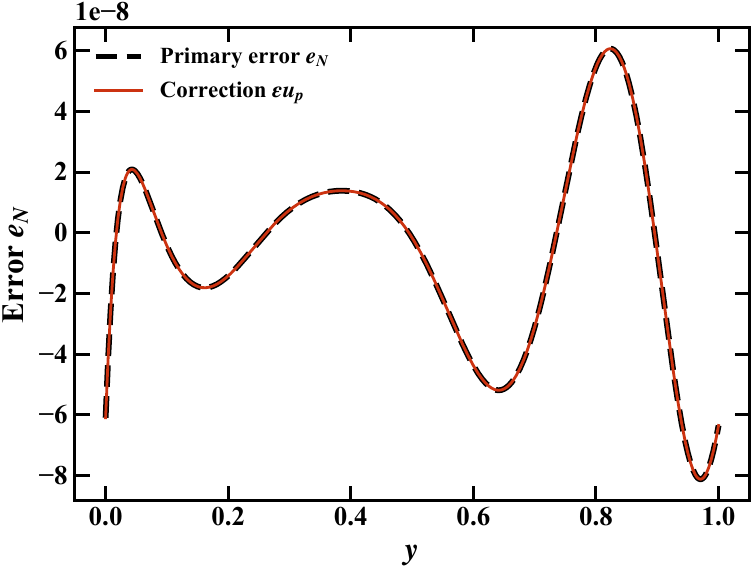}
		\caption{Interface trace of the primary error $e_N$ and the correction. Blue: $e_N$. Red dashed: correction $\epsilon u_p$.}
		\label{fig:interface_error_concise}
	\end{subfigure}
	\caption{Numerical results for Example~1.}
	\label{fig:stage1_comparison}
\end{figure}

Fig~\ref{fig:stage1_comparison} reports the correction and the resulting error. In Fig~\ref{fig:stage1_error}, the left panel shows the primary error $e_N$, and the right panel shows the scaled correction $\epsilon u_p$ computed from the perturbation-correction stage. After adding this correction, the corrected error $e_h$ is reduced to the $10^{-12}$ scale, as shown in Fig~\ref{fig:perturbation_solution}. The convergence history in Fig~\ref{fig:residual_vs_iteration} shows that the residual norm and $\erro_{L^2}(u_N)$ of the primary LRaNN stage stagnate after the initial decrease, whereas the correction stage further reduces the residual norm and $\erro_{L^2}(u_h)$. Finally, Fig~\ref{fig:interface_error_concise} shows the primary error $e_N$ and the correction $\epsilon u_p$ along the interface $\Gamma$, where the two curves overlap. These observations show that, for this baseline two-subdomain test, the perturbation-correction stage captures the dominant residual error left by the primary LRaNN approximation.

\subsection{Example 2}
In the second example, we consider a four-subdomain quasi-linear interface problem with homogeneous jump conditions ($w=v=0$). The computational domain is $\Omega = [-1,1]\times[-1,1]$, and the coordinate axes $x=0$ and $y=0$ partition $\Omega$ into four quadrants
\[
\Omega_1 = [-1,0)\times(0,1],\quad
\Omega_2 = (0,1]\times(0,1],\quad
\Omega_3 = [-1,0)\times[-1,0),\quad
\Omega_4 = (0,1]\times[-1,0),
\]
which meet at the origin. The nonlinear diffusion coefficients are
\[
\beta_1 = 1+u,\quad 
\beta_2 = 1 + 0.5\,u^2,\quad
\beta_3 = 1 + 0.25\,u^2,\quad
\beta_4 = 1 + 0.1\,u^3,
\]
and the source terms are chosen so that the exact solution is
\begin{equation*}
	u(x,y) =
	\begin{cases}
		x^2y^2, & (x,y)\in\Omega_1,\\
		e^{-x}x^2y^2, & (x,y)\in\Omega_2,\\
		e^{-y}x^2y^2, & (x,y)\in\Omega_3,\\
		e^{-(x+y)}x^2y^2, & (x,y)\in\Omega_4.
	\end{cases}
\end{equation*}

We assign to each subdomain a primary $\tanh$ network $u_N^\pm$ and a correction $\sin$ network $u_p^\pm$. Each primary network uses $100$ neurons, with hidden weights and biases sampled from $\mathcal N(0,1)$ and $\mathcal N(0,0.1^2)$; the Gauss--Newton iteration is run for $5$ to $6$ steps, by which point the residual norm has stagnated. Each correction network uses $400$ neurons, with hidden weights and biases sampled from $\mathcal N(0,(5\pi)^2)$ and $\mathcal N(0,1)$, and the perturbation-correction subproblem is solved for another $3$ to $4$ Gauss--Newton steps. The perturbation parameter $\epsilon$ is set to the scale of the primary residual norm $\|\mathcal F(u_N)\|$.

Table~\ref{tab:subdomain_errors} reports $\erro_{L^2}(U)$ and $\erro_{L^\infty}(U)$ in the four subdomains. The LRaNN columns give the primary error from the Gauss--Newton iterations, and the LRaNN-PC columns give the corrected error. The primary error attains $\erro_{L^2}(u_N)$ and $\erro_{L^\infty}(u_N)$ at the $10^{-7}$--$10^{-8}$ level in all four subdomains, and the perturbation-correction stage reduces both errors to the $10^{-12}$--$10^{-13}$ level. Fig~\ref{fig:stage1_comparison_exp1_new} shows primary error and correction error of this example. Panel~(a) shows the surfaces of the primary error $e_N$ (about $O(10^{-7})$) and the correction $\epsilon u_p$, which match in both shape and amplitude. The primary error surface is oscillatory and high-frequency, which is why the perturbation-correction stage uses a sinusoidal correction basis. Panel~(b) shows the corrected error $e_h$, which reaches the $O(10^{-12})$ level. Panels~(c) and~(d) show the primary error $e_N$ and $\epsilon u_p$ along the interfaces $x=0$ and $y=0$, where the two curves coincide.

\begin{figure}[t]
	\centering
	\begin{subfigure}[t]{0.65\textwidth}
		\centering
		\includegraphics[width=\textwidth]{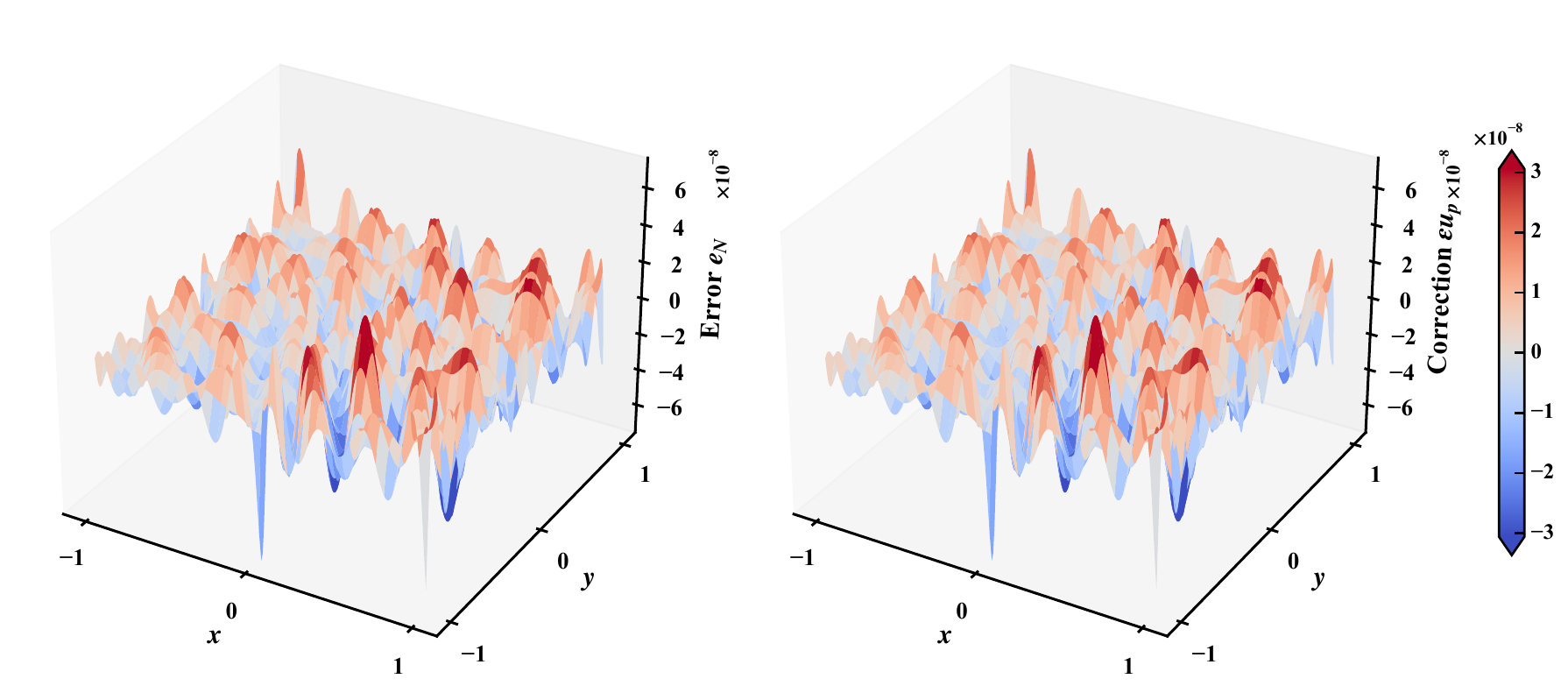}
		\caption{Left: primary error $e_N$. Right: correction $\epsilon u_p$.}
		\label{fig:stage1_error_exp1_new}
	\end{subfigure}\hfill
	\begin{subfigure}[t]{0.33\textwidth}
		\centering
		\includegraphics[width=\textwidth]{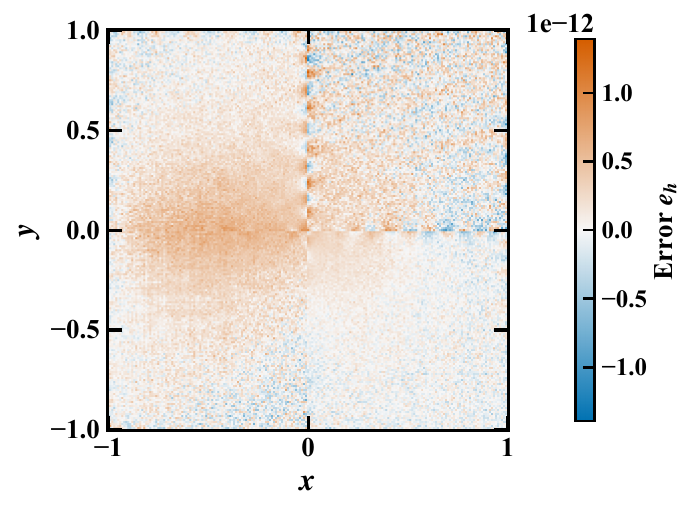} 
		\caption{Pointwise error $e_h$.}
		\label{fig:perturbation_solution_exp1_new}
	\end{subfigure}

	\par\medskip

	\begin{subfigure}[t]{0.48\textwidth}
		\centering
		\includegraphics[width=\textwidth]{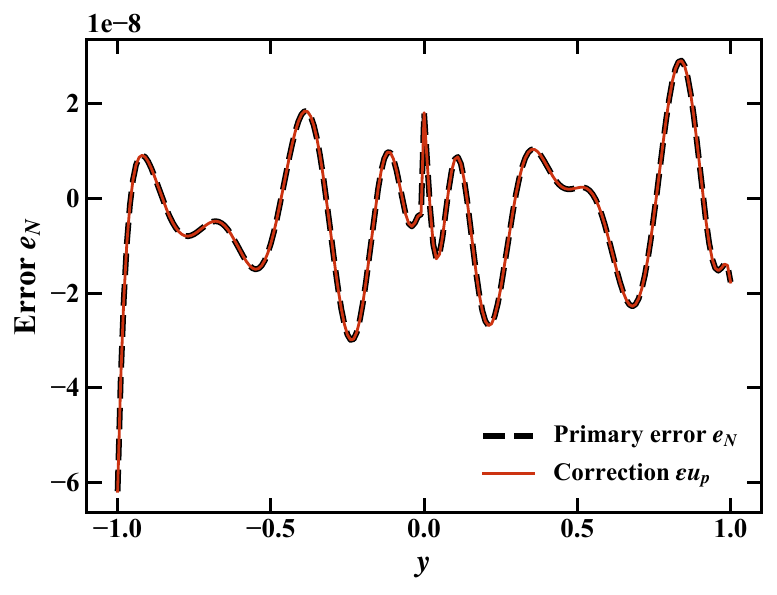} 
    	\caption{Primary error $e_N$ and $\epsilon u_p$ at interface $x = 0$. Black dashed: $e_N$. Red solid: $\epsilon u_p$.}
		\label{fig:interface_error_concise_x0_exp1_new}
	\end{subfigure}\hfill
	\begin{subfigure}[t]{0.48\textwidth}
		\centering
		\includegraphics[width=\textwidth]{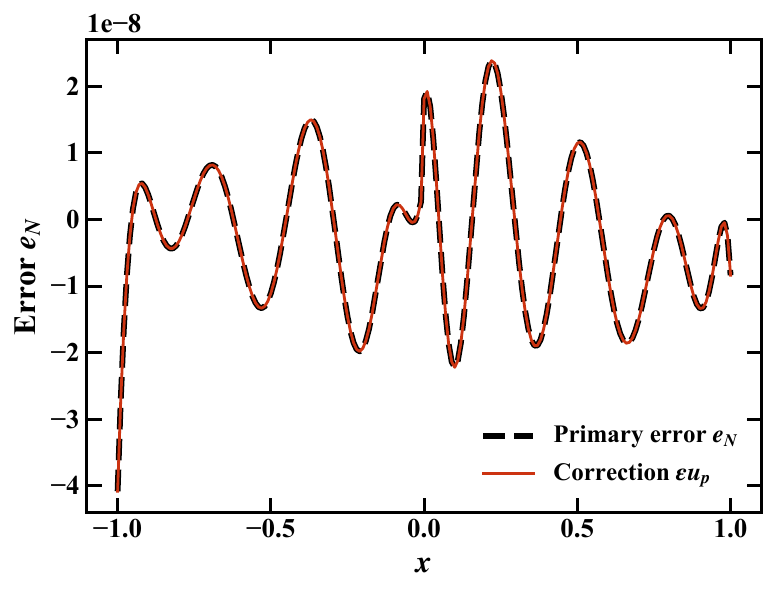} 
		\caption{Primary error $e_N$ and $\epsilon u_p$ at interface $y = 0$. Black dashed: $e_N$. Red solid: $\epsilon u_p$.}
		\label{fig:interface_error_concise_y0_exp1_new}
	\end{subfigure}
	\caption{Numerical results of Example 2.}
	\label{fig:stage1_comparison_exp1_new}
\end{figure}

\begin{table}[h]
	\centering
	\caption{Relative error comparison for the four-subdomain problem (Example 2).}
	\label{tab:subdomain_errors}
	\begin{tabular}{lcccc}
		\toprule
	Subdomain & \multicolumn{2}{c}{LRaNN} & \multicolumn{2}{c}{LRaNN-PC} \\
		\cmidrule(lr){2-3} \cmidrule(lr){4-5}
		& $\erro_{L^2}(u_N)$ & $\erro_{L^\infty}(u_N)$ & $\erro_{L^2}(u_h)$ & $\erro_{L^\infty}(u_h)$ \\
		\midrule
		$\Omega_1$ & $4.5790\times 10^{-8}$ & $5.5010\times 10^{-8}$ & $8.6359\times 10^{-13}$ & $8.8042\times 10^{-13}$ \\
		$\Omega_2$ & $1.8043\times 10^{-7}$ & $8.5307\times 10^{-8}$ & $1.4879\times 10^{-12}$ & $1.1835\times 10^{-12}$ \\
		$\Omega_3$ & $1.2109\times 10^{-7}$ & $6.3436\times 10^{-8}$ & $1.5871\times 10^{-12}$ & $1.0106\times 10^{-12}$ \\
		$\Omega_4$ & $2.3272\times 10^{-7}$ & $3.1417\times 10^{-8}$ & $4.6773\times 10^{-12}$ & $1.6778\times 10^{-12}$ \\
		\bottomrule
	\end{tabular}
\end{table}

To separate the effect of the perturbation-correction formulation from that of iteration count, network width, and basis functions, Table~\ref{tab:ablation_perturbation} reports an ablation study over $10$ random seeds. The proposed perturbation-correction method ($\tanh$ primary for $10$ steps, $\sin$ correction for $10$ steps) reaches a mean relative $L^2$ error of about $4\times10^{-11}$, with individual seeds ranging from about $2\times10^{-12}$ to $1\times10^{-10}$; the representative run in Table~\ref{tab:subdomain_errors} lies at the more accurate end of this range. This error is several orders of magnitude below every alternative considered. Continuing the primary $\tanh$ networks for $20$ steps (Variant~A) or enlarging the primary $\tanh$ networks to a total of $2000$ neurons (Variant~E, i.e.\ $500$ per subdomain, matching the $400$ primary plus $1600$ correction neurons that the two-stage method uses across the four subdomains) leaves $\erro_{L^2}(U)$ at the $10^{-6}$ level, so neither additional $\tanh$ iterations nor a larger $\tanh$ dictionary helps once the primary stage has converged. Using the $\sin$ network alone with $u_0=0$ (Variant~B) yields errors of order $10^{-3}$, so the high-frequency basis is not by itself a suitable approximation space. Placing all basis functions in a single network and training them directly (Variant~C) improves on Variant~B but does not match the staged formulation, so the gain is not explained by the presence of high-frequency functions alone. Replacing the $\sin$ correction by a $\tanh$ correction in the two-stage framework (Variant~D) lowers the error relative to the single-stage $\tanh$ continuation but remains well above the $\sin$-correction result, since the residual is dominated by high-frequency components that the $\tanh$ basis represents less efficiently. These results indicate that the accuracy gain comes from the staged perturbation-correction formulation together with a high-frequency correction basis, rather than from more iterations, a wider $\tanh$ network, or pooling all basis functions in one stage.

To examine the dependence on the collocation resolution, we further refine the interior, interface, and boundary collocation sets for Example~2. The number $N_\Omega$ in Table~\ref{tab:exp2-collocation-refinement} denotes the total number of interior collocation points over the four subdomains; the numbers of interface and boundary points are increased proportionally. The errors are evaluated on an independent test set that is not used in training. Except for the coarsest collocation set, where the correction problem is under-resolved, the perturbation-correction stage consistently reduces both the relative $L^2$ error and the relative broken $H^1$ error as the collocation set is refined.

\begin{table}[h]
	\centering
	\caption{Collocation refinement study for Example~2. The errors are evaluated on an independent test set. Here $N_\Omega$ denotes the total number of interior collocation points over the four subdomains.}
	\label{tab:exp2-collocation-refinement}
	\begin{tabular}{ccccc}
		\toprule
		\multirow{2}{*}{$N_\Omega$} 
		& \multicolumn{2}{c}{LRaNN} 
		& \multicolumn{2}{c}{LRaNN-PC} \\
		\cmidrule(lr){2-3} \cmidrule(lr){4-5}
		& $\erro_{L^2}(u_N)$ & $\erro_{H^1}(u_N)$
		& $\erro_{L^2}(u_h)$ & $\erro_{H^1}(u_h)$ \\
		\midrule
		$1024$  & $5.4008\times 10^{-5}$ & $7.3162\times 10^{-5}$ & $2.4639\times 10^{-3}$ & $3.5852\times 10^{-3}$ \\
		$2304$  & $1.1287\times 10^{-5}$ & $3.7411\times 10^{-5}$ & $3.8769\times 10^{-8}$ & $4.1368\times 10^{-8}$ \\
		$4096$  & $6.2894\times 10^{-6}$ & $3.2503\times 10^{-5}$ & $3.3150\times 10^{-9}$ & $3.1522\times 10^{-9}$ \\
		$9216$  & $1.1736\times 10^{-6}$ & $7.0121\times 10^{-6}$ & $4.4473\times 10^{-11}$ & $5.5110\times 10^{-11}$ \\
		$16384$ & $1.0858\times 10^{-7}$ & $7.6105\times 10^{-7}$ & $3.4828\times 10^{-12}$ & $1.1065\times 10^{-11}$ \\
		\bottomrule
	\end{tabular}
\end{table}

\begin{table}[t]
	\centering
	\caption{Ablation study for the four-subdomain problem (Example~2). Entries are $\erro_{L^2}(U)$ and $\erro_{L^\infty}(U)$ as mean $\pm$ standard deviation over $10$ random seeds.}
	\label{tab:ablation_perturbation}
	\begin{tabular}{lcc}
		\toprule
		Experiment & $\erro_{L^2}(U)$ & $\erro_{L^\infty}(U)$ \\
		\midrule
		Proposed: $\tanh\times10$ + $\sin$-pert $\times10$
		& $4.3436\times 10^{-11} \pm 3.89\times 10^{-11}$
		& $4.4696\times 10^{-11} \pm 4.05\times 10^{-11}$ \\
		Variant~A: $\tanh \times 20$ 
		& $1.2623\times 10^{-6} \pm 1.01\times 10^{-6}$ 
		& $8.1306\times 10^{-7} \pm 6.55\times 10^{-7}$ \\
		Variant~B: $\sin$-pert $\times 20$ ($u_0=0$) 
		& $2.2456\times 10^{-3} \pm 2.50\times 10^{-7}$ 
		& $1.1677\times 10^{-3} \pm 1.09\times 10^{-8}$ \\
		Variant~C: combined $\tanh$+$\sin$ $\times 20$ 
		& $7.2290\times 10^{-7} \pm 5.68\times 10^{-7}$ 
		& $4.3126\times 10^{-7} \pm 3.34\times 10^{-7}$ \\
		Variant~D: $\tanh \times 10$ + $\tanh$-pert $\times 10$ 
		& $1.4899\times 10^{-7} \pm 6.33\times 10^{-8}$ 
		& $1.8392\times 10^{-7} \pm 5.99\times 10^{-8}$ \\
		Variant~E: $\tanh \times 20$ ($2000$ neurons) 
		& $4.9467\times 10^{-6} \pm 1.53\times 10^{-6}$ 
		& $3.0417\times 10^{-6} \pm 8.53\times 10^{-7}$ \\
		\bottomrule
	\end{tabular}
\end{table}

\subsection{Example 3}
 
In this example, we consider a quasi-linear elliptic interface problem with a curved, multi-lobe interface and nonlinear solution-dependent diffusion coefficients. The computational domain is $\Omega = [-1,1] \times [-1,1]$, and the interface $\Gamma$ is given in polar coordinates $(r,\theta)$ by
\begin{equation*}
	r(\theta) = R_0 + A_{\mathrm{plum}} \cos(m_{\mathrm{petal}}\,\theta),
\end{equation*}
where $m_{\mathrm{petal}}$ controls the number of lobes and the fixed constants are $R_0 = 0.5$ and $A_{\mathrm{plum}} = 0.3$. The interface divides $\Omega$ into the interior subdomain $\Omega^{-}$ and the exterior subdomain $\Omega^{+}$.
 
The nonlinear diffusion coefficient is
\begin{equation*}
	\beta(x,u) =
	\begin{cases}
		x^2 + y^2 + \exp\!\left(\tfrac12 u\right), & (x,y) \in \Omega^+, \\[0.4em]
		1 + \sin(u), & (x,y) \in \Omega^-,
	\end{cases}
\end{equation*}
and the source terms $f^\pm$ together with the interface data $w$ and $v$ are chosen so that the exact solution is
\begin{equation*}
	u(x,y) =
	\begin{cases}
		0.5\,\sin(\pi x)\,\sin(\pi y) + 0.25, & (x,y)\in\Omega^+, \\[0.4em]
		0.25 - (x^2+y^2), & (x,y)\in\Omega^-.
	\end{cases}
\end{equation*}
 
We use local randomized neural networks $u_N^\pm$ to approximate the interior and exterior solutions independently, each with $m=500$ neurons and $\tanh$ activation. The hidden weights and biases are sampled from $\mathcal N(0,1)$ and $\mathcal N(0,0.1^2)$, and the Gauss--Newton iteration is run until the residual norm ceases to decay. The perturbation-correction subproblem is then solved with two local randomized networks (one per subdomain) of $m_p=2000$ neurons and $\sin$ activation, whose hidden weights and biases are sampled from $\mathcal N(0,(7\pi)^2)$ and $\mathcal N(0,1)$, respectively.
 
\begin{figure}[t]
	\centering
	\begin{subfigure}[t]{0.32\textwidth}
		\includegraphics[width=\textwidth]{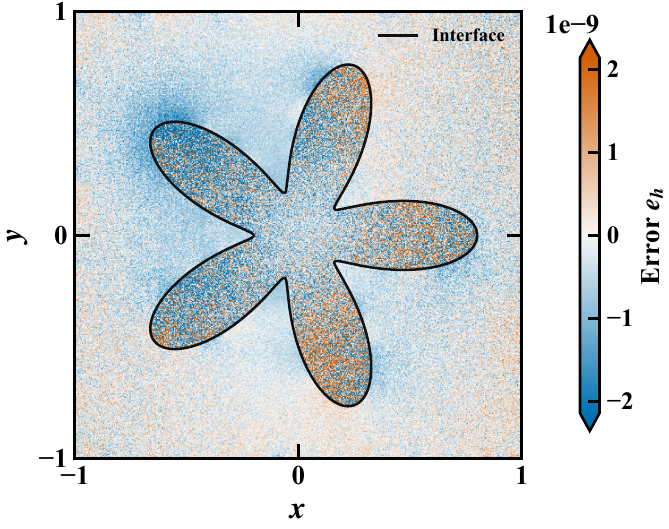}
		\caption{$m_{\mathrm{petal}}=5$: corrected error $e_h$.}
	\end{subfigure}
	\hfill
	\begin{subfigure}[t]{0.32\textwidth}
		\includegraphics[width=\textwidth]{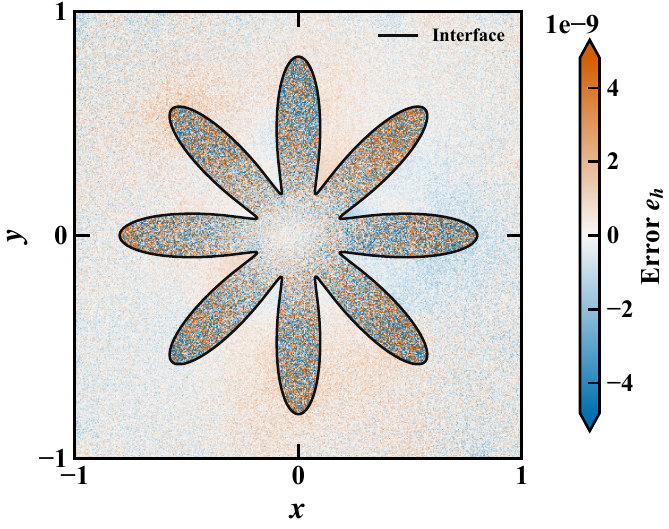}
		\caption{$m_{\mathrm{petal}}=8$: corrected error $e_h$.}
	\end{subfigure}
	\hfill
	\begin{subfigure}[t]{0.32\textwidth}
		\includegraphics[width=\textwidth]{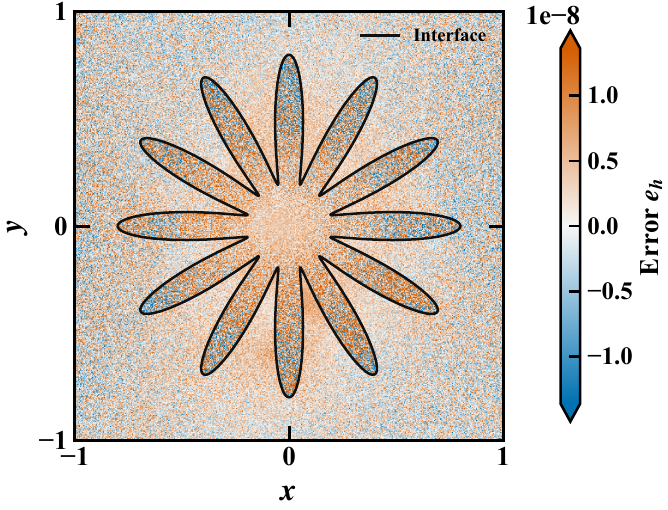}
		\caption{$m_{\mathrm{petal}}=12$: corrected error $e_h$.}
	\end{subfigure}
 
	\par\medskip
 
	\begin{subfigure}[t]{0.32\textwidth}
		\includegraphics[width=\textwidth]{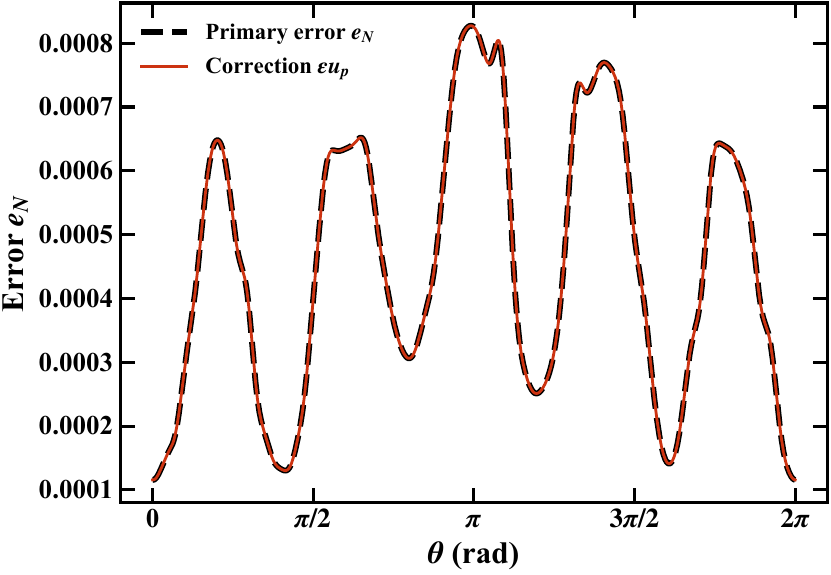}
		\caption{$m_{\mathrm{petal}}=5$: interface trace.}
	\end{subfigure}
	\hfill
	\begin{subfigure}[t]{0.32\textwidth}
		\includegraphics[width=\textwidth]{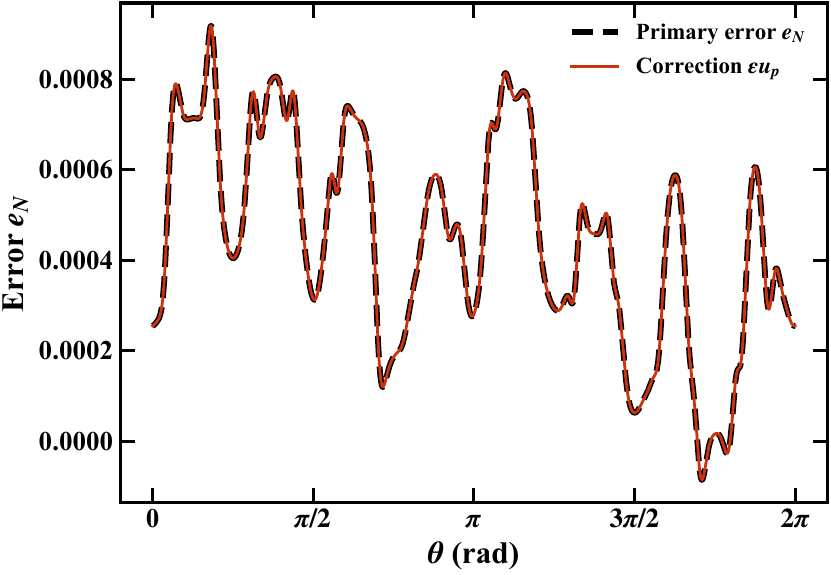}
		\caption{$m_{\mathrm{petal}}=8$: interface trace.}
	\end{subfigure}
	\hfill
	\begin{subfigure}[t]{0.32\textwidth}
		\includegraphics[width=\textwidth]{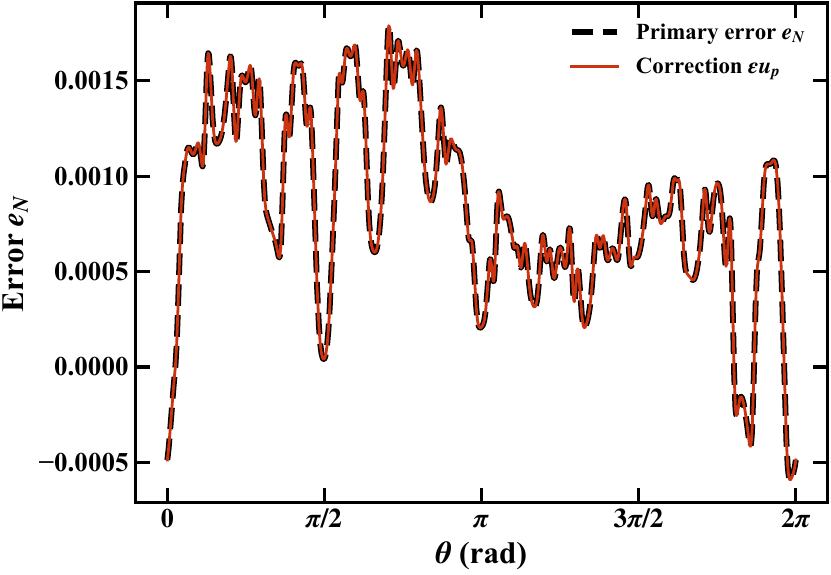}
		\caption{$m_{\mathrm{petal}}=12$: interface trace.}
	\end{subfigure}
	\caption{Example~3, plum-blossom interface with $m_{\mathrm{petal}}=5,8,12$ (columns). Top row (a)--(c): corrected error $e_h$. Bottom row (d)--(f): primary error $e_N$ along the interface $\Gamma$ (blue) and the computed correction $\epsilon u_p$ (red dashed); the two curves overlap.}
	\label{fig:comparison_exp3_newcurve}
\end{figure}
 
Table~\ref{tab:error_analysis_exp3} reports $\erro_{L^2}(U)$ and $\erro_{L^\infty}(U)$ for the primary approximation (LRaNN) and the corrected approximation (LRaNN-PC) at $m_{\mathrm{petal}}=5,8,12$, where LRaNN-PC uses the second-order (two-term) correction. The primary LRaNN stage stagnates at the $10^{-3}$ level for all three geometries, while the perturbation-correction stage reduces both error indicators by about five orders of magnitude, to the $10^{-9}$--$10^{-8}$ range. The primary error grows with $m_{\mathrm{petal}}$, while the corrected error stays in the $10^{-9}$ to $10^{-8}$ range for all three geometries. Fig~\ref{fig:comparison_exp3_newcurve} shows that the corrected error $e_h$ is reduced uniformly across the domain, including the lobed interface region, and that along $\Gamma$ the computed correction $\epsilon u_p$ overlaps the primary error $e_N$, so that the residual is reduced after correction.
\begin{table}[h]
	\centering
	\caption{Relative errors for the plum-blossom interface (Example~3). LRaNN-PC uses the second-order correction.}
	\label{tab:error_analysis_exp3}
	\vspace{0.5em}
	\begin{tabular}{ccccc}
		\toprule
		\multicolumn{1}{c}{$m_{\mathrm{petal}}$} & \multicolumn{2}{c}{LRaNN} & \multicolumn{2}{c}{LRaNN-PC} \\
		\cmidrule(lr){2-3} \cmidrule(lr){4-5}
		& $\erro_{L^2}(u_N)$ & $\erro_{L^\infty}(u_N)$  & $\erro_{L^2}(u_h)$ & $\erro_{L^\infty}(u_h)$\\
		\midrule
		5  & $1.0585 \times 10^{-3}$ & $1.1020 \times 10^{-3}$ & $3.1477 \times 10^{-9}$ & $1.5230 \times 10^{-8}$ \\
		8  & $1.1565 \times 10^{-3}$ & $1.3638 \times 10^{-3}$ & $6.6245 \times 10^{-9}$ & $3.1599 \times 10^{-8}$ \\
		12 & $2.3117 \times 10^{-3}$ & $2.8851 \times 10^{-3}$ & $2.1265 \times 10^{-8}$ & $7.8628 \times 10^{-8}$ \\
		\bottomrule
	\end{tabular}
\end{table}
 
To check that these accuracies are not tied to one random initialization, we repeat the second-order LRaNN-PC stage for each geometry over $10$ random seeds, redrawing the hidden weights and biases each time. Table~\ref{tab:exp3-seed-sweep} reports the resulting mean $\pm$ standard deviation. The corrected $\erro_{L^2}(u_h)$ stays in the $10^{-9}$ to $10^{-8}$ band for every seed, with means of $6.4\times10^{-9}$, $1.2\times10^{-8}$, and $9.2\times10^{-9}$ at $m_{\mathrm{petal}}=5,8,12$. The standard deviation within each geometry is of the same order as the mean and comparable to the spread across geometries, so the corrected error does not depend monotonically on $m_{\mathrm{petal}}$ once seed variability is included. The representative single-run values in Table~\ref{tab:error_analysis_exp3} sit at the favorable end of this distribution for $m_{\mathrm{petal}}=5$ and at the unfavorable end for $m_{\mathrm{petal}}=12$, which overstates the geometry dependence. The $\erro_{L^\infty}(u_h)$ statistics behave the same way.
 
\begin{table}[h]
	\centering
	\caption{Robustness of the corrected (second-order LRaNN-PC) relative errors to the random initialization for the plum-blossom interface (Example~3). Entries are $\erro_{L^2}(u_h)$ and $\erro_{L^\infty}(u_h)$ as mean $\pm$ standard deviation over $10$ random seeds.}
	\label{tab:exp3-seed-sweep}
	\vspace{0.5em}
	\begin{tabular}{ccc}
		\toprule
		$m_{\mathrm{petal}}$ & $\erro_{L^2}(u_h)$ & $\erro_{L^\infty}(u_h)$ \\
		\midrule
		5  & $6.383\times 10^{-9} \pm 2.54\times 10^{-9}$ & $2.677\times 10^{-8} \pm 9.82\times 10^{-9}$ \\
		8  & $1.198\times 10^{-8} \pm 7.43\times 10^{-9}$ & $5.954\times 10^{-8} \pm 4.39\times 10^{-8}$ \\
		12 & $9.180\times 10^{-9} \pm 6.63\times 10^{-9}$ & $3.355\times 10^{-8} \pm 1.23\times 10^{-8}$ \\
		\bottomrule
	\end{tabular}
\end{table}
 
\begin{table}[t]
	\centering
	\caption{Relative errors obtained with one-, two-, and three-term expansions for different $m_{\mathrm{petal}}$ (Example~3). The one-, two-, and three-term models retain the linear, quadratic, and cubic terms, respectively, in the residual expansion with respect to the actual correction $\delta u=\epsilon u_p$.}
	\label{tab:mpetal-epsilon2}
	\begin{tabular}{ccccccc}
		\toprule
		& \multicolumn{2}{c}{One-term (linear)} 
& \multicolumn{2}{c}{Two-term (quadratic)} 
& \multicolumn{2}{c}{Three-term (cubic)} \\
		\cmidrule(lr){2-3}\cmidrule(lr){4-5}\cmidrule(lr){6-7}
		$m_{\mathrm{petal}}$ & $\erro_{L^2}(u_h)$ & $\erro_{L^\infty}(u_h)$ & $\erro_{L^2}(u_h)$ & $\erro_{L^\infty}(u_h)$ & $\erro_{L^2}(u_h)$ & $\erro_{L^\infty}(u_h)$ \\
		\midrule
		5  & $2.3109\times 10^{-7}$ & $3.6438\times 10^{-7}$ & $3.1479\times 10^{-9}$ & $1.5229\times 10^{-8}$ & $3.1402\times 10^{-9}$ & $1.5221\times 10^{-8}$ \\
		8  & $1.8863\times 10^{-7}$ & $4.8269\times 10^{-7}$ & $6.6244\times 10^{-9}$ & $3.1596\times 10^{-8}$ & $6.6254\times 10^{-9}$ & $3.1614\times 10^{-8}$ \\
		12 & $6.8426\times 10^{-7}$ & $2.8138\times 10^{-6}$ & $2.1265\times 10^{-8}$ & $7.8621\times 10^{-8}$ & $2.1314\times 10^{-8}$ & $7.8665\times 10^{-8}$ \\
		\bottomrule
	\end{tabular}
\end{table}
 
Table~\ref{tab:mpetal-epsilon2} compares the corrected accuracy when the residual expansion is truncated after the linear, quadratic, and cubic correction terms. The two-term expansion lowers $\erro_{L^2}(u_h)$ by about two orders of magnitude relative to the one-term expansion. The three-term expansion gives essentially the same accuracy as the two-term one, which suggests that, for this test, the dominant error after the quadratic model is no longer the neglected higher-order correction term. The corresponding correction-stage solve times are about $45$--$64$\,s for the one-term expansion, $86$--$108$\,s for the two-term expansion, and $86$--$105$\,s for the three-term expansion: the two- and three-term solves cost about the same, while the one-term solve is cheaper but roughly two orders of magnitude less accurate. Since the third-order expansion does not improve on the second-order one, we adopt the second-order correction as the default LRaNN-PC correction, and report the three-term results only for completeness.
\subsection{Example 4}
In this example, we consider a high-contrast elliptic interface problem with a large constant diffusion coefficient $\beta^+\gg1$ in $\Omega^+$ and a nonlinear diffusion $\beta^-(u)$ in $\Omega^-$. The problem is posed on $\Omega=[-1,1]\times[-1,1]$ with the circular interface
\begin{equation*}
	\Gamma = \{(x,y)\mid x^2+y^2=1/4\},
\end{equation*}
which partitions $\Omega$ into the inner subdomain $\Omega^-=\{x^2+y^2<1/4\}$ and the outer subdomain $\Omega^+=\Omega\setminus\overline{\Omega^-}$. In $\Omega^+$ the diffusion is linear with a high constant conductivity, and in $\Omega^-$ it is nonlinear,
\begin{equation*}
	\beta^-(u)=1+u^3.
\end{equation*}
The source terms and interface data are chosen so that the exact solution is
\begin{equation*}
	u(x,y)=
	\begin{cases}
		\dfrac{x^3+y^3}{\beta^+}, & (x,y)\in\Omega^+,\\[0.4em]
		\sin(\pi x)\sin(\pi y), & (x,y)\in\Omega^-.
	\end{cases}
\end{equation*}

Each subdomain is approximated by a primary $\tanh$ LRaNN with $500$ neurons and a correction $\sin$ LRaNN with $1000$ neurons. The primary weights and biases are sampled from $\mathcal N(0,1)$ and $\mathcal N(0,0.1^2)$, and the correction weights and biases from $\mathcal N(0,(7\pi)^2)$ and $\mathcal N(0,1)$. The Gauss--Newton iteration is run until the residual norm stagnates in each stage, and the perturbation parameter $\epsilon$ is set to the scale of the primary residual norm $\|\mathcal F(u_N)\|$. Because $\beta^+\gg1$ in $\Omega^+$, the flux-jump residual $\mathcal R_{\Gamma_n}$ and the associated rows of the Jacobian carry a much larger magnitude than the remaining residual blocks, which inflates the condition number of the least-squares system. To remove this artificial scale disparity, we divide the flux-jump residual and its Jacobian rows by $\beta^+$ before assembling the normal equations, so that the optimization is not dominated by the high-contrast flux term.

For the representative case $\beta^+/\beta^-=10^{12}$, Fig~\ref{fig:stage1_comparison_exp3} shows the correction and the resulting error. Fig~\ref{fig:stage1_error_exp3} shows the primary error $e_N$ together with the correction $\epsilon u_p$, which match across the domain; Fig~\ref{fig:perturbation_solution_exp3} shows the reduced corrected error $e_h$; Fig~\ref{fig:residual_vs_iteration_exp3} shows that the residual norm and error stagnate in the primary stage and decrease further in the perturbation-correction stage; and Fig~\ref{fig:interface_error_concise_exp3} shows the primary error $e_N$ and $\epsilon u_p$ along $\Gamma$, where the two curves coincide.

\begin{figure}[t]
	\centering
	\begin{subfigure}[t]{0.6\textwidth}
		\centering
		\includegraphics[width=\textwidth]{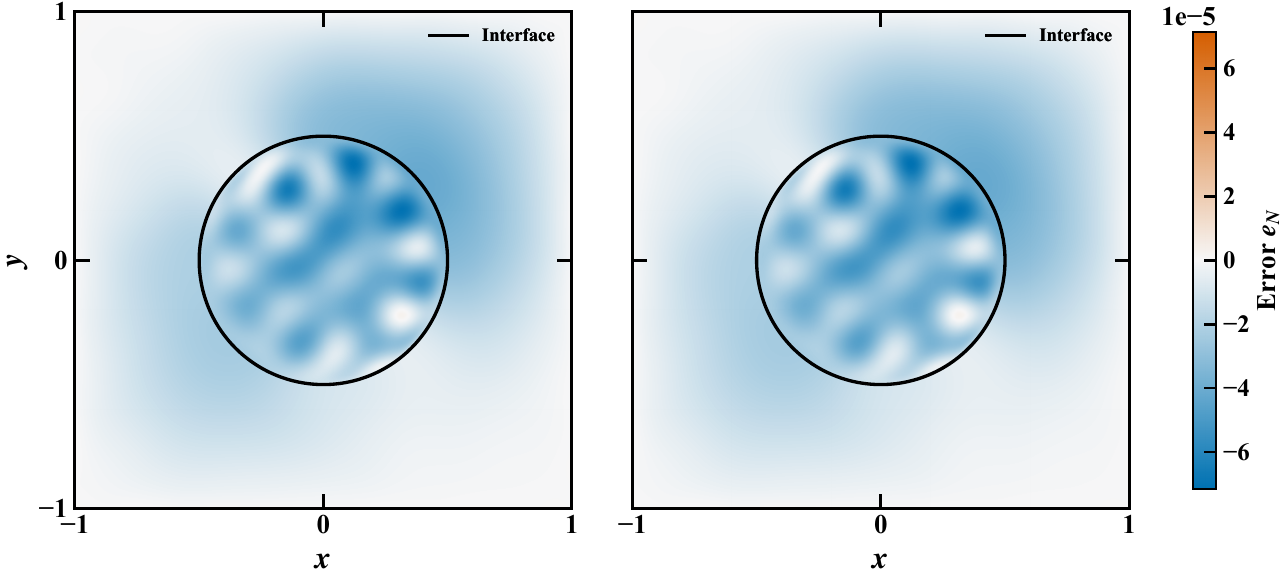}
		\caption{Left: primary error $e_N$. Right: correction $\epsilon u_p$.}
		\label{fig:stage1_error_exp3}
	\end{subfigure}\hfill
	\begin{subfigure}[t]{0.34\textwidth}
		\centering
		\includegraphics[width=\textwidth]{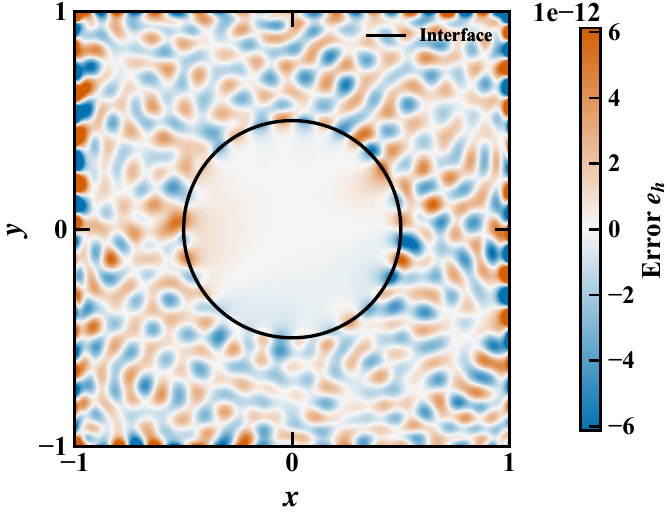} 
		  \caption{Pointwise error $e_h$.}
		\label{fig:perturbation_solution_exp3}
	\end{subfigure}

	\par\medskip

	\begin{subfigure}[t]{0.475\textwidth}
		\centering
		\includegraphics[width=\textwidth]{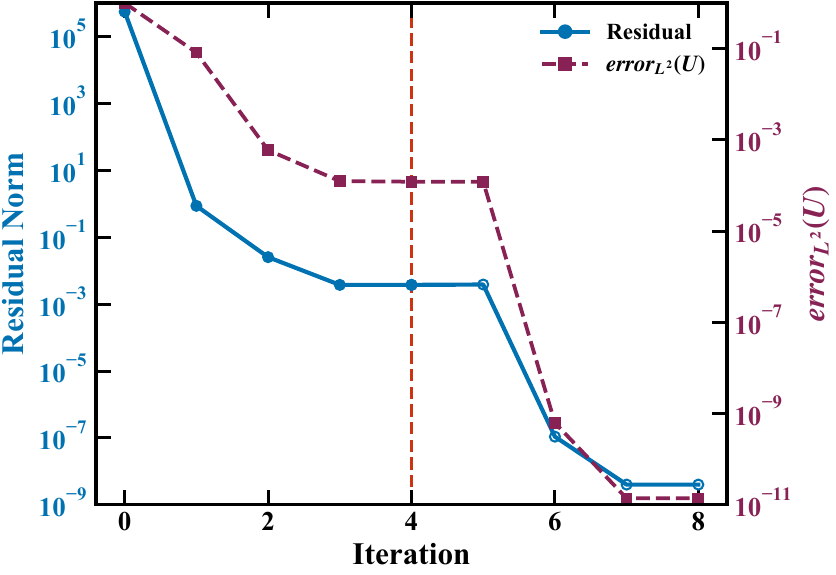} 
		\caption{Residual norm and $\erro_{L^2}(U)$ with iteration.}
		\label{fig:residual_vs_iteration_exp3}
	\end{subfigure}\hfill
	\begin{subfigure}[t]{0.48\textwidth}
		\centering
		\includegraphics[width=\textwidth]{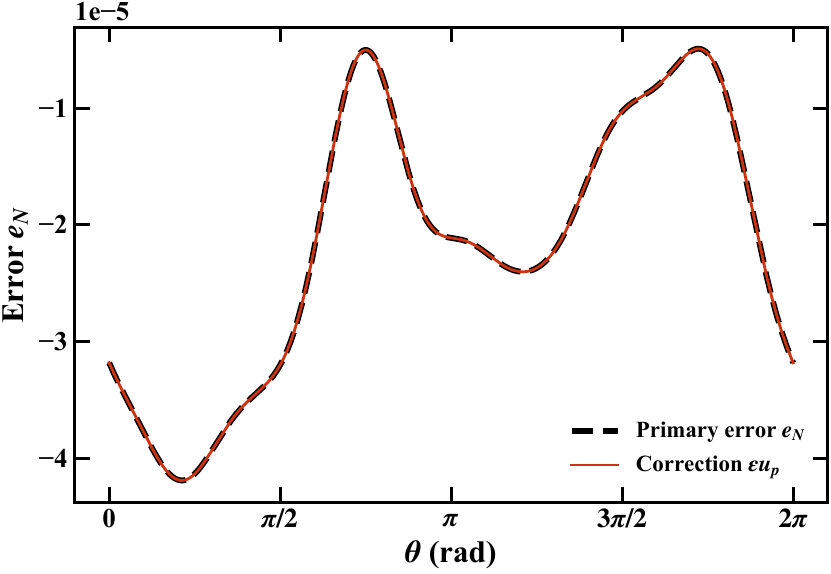} 
		\caption{Primary error $e_N$ and $\epsilon u_p$ at interface $\Gamma$. Black dashed: $e_N$. Red solid: $\epsilon u_p$.}
		\label{fig:interface_error_concise_exp3}
	\end{subfigure}
	\caption{Numerical results for Example~4,high-contrast case $\beta^+/\beta^-=10^{12}$.}
	\label{fig:stage1_comparison_exp3}
\end{figure}

Table~\ref{tab:contrast_comparison} reports the (relative) errors for contrast ratios $\beta^+/\beta^-$ from $10^4$ to $10^{12}$. The primary approximation errors are nearly insensitive to the contrast, with $\erro_{L^2}(u_N)\approx 2.32\times10^{-6}$ and $\erro_{L^\infty}(u_N)\approx 2.92\times10^{-6}$ throughout the tested range. With the flux-jump rescaling described above, the perturbation-correction stage reduces the relative indicators to the $10^{-12}$--$10^{-11}$ level and keeps them nearly constant across the contrast range: the corrected $\erro_{L^2}(u_h)$ stays between $6.31\times10^{-12}$ and $6.97\times10^{-12}$, and $\erro_{L^\infty}(u_h)$ between $1.89\times10^{-11}$ and $2.68\times10^{-11}$. For the representative case $\beta^+/\beta^-=10^{12}$, the corrected relative errors are $\erro_{L^2}(u_h)=6.7670\times10^{-12}$ and $\erro_{L^\infty}(u_h)=2.6583\times10^{-11}$.
\begin{table}[h]
	\centering
	\caption{Relative errors of the two-stage framework for Example~4 under various diffusion contrast ratios $\beta^+/\beta^-$.}
	\label{tab:contrast_comparison}
	\vspace{0.5em}
	\begin{tabular}{ccccc}
		\toprule
		\multicolumn{1}{c}{$\beta^+/\beta^-$} & \multicolumn{2}{c}{LRaNN} & \multicolumn{2}{c}{LRaNN-PC} \\
		\cmidrule(lr){2-3} \cmidrule(lr){4-5}
		& $\erro_{L^2}(u_N)$ & $\erro_{L^\infty}(u_N)$ & $\erro_{L^2}(u_h)$ & $\erro_{L^\infty}(u_h)$ \\
		\midrule
		$10^{4}$  & $2.3164\times 10^{-6}$ & $2.9150\times 10^{-6}$ & $6.3124\times 10^{-12}$ & $1.8925\times 10^{-11}$ \\
		$10^{6}$  & $2.3176\times 10^{-6}$ & $2.9159\times 10^{-6}$ & $6.6070\times 10^{-12}$ & $2.5573\times 10^{-11}$ \\
		$10^{8}$  & $2.3195\times 10^{-6}$ & $2.9171\times 10^{-6}$ & $6.8894\times 10^{-12}$ & $2.6379\times 10^{-11}$ \\
		$10^{10}$ & $2.3197\times 10^{-6}$ & $2.9171\times 10^{-6}$ & $6.9655\times 10^{-12}$ & $2.6831\times 10^{-11}$ \\
		$10^{12}$ & $2.3185\times 10^{-6}$ & $2.9161\times 10^{-6}$ & $6.7670\times 10^{-12}$ & $2.6583\times 10^{-11}$ \\
		\bottomrule
	\end{tabular}
\end{table}

\subsection{Example 5}

The previous examples use manufactured exact solutions. We now consider a physically motivated quasi-linear interface problem that does not admit a closed-form solution, and assess the proposed method against a high-accuracy numerical reference over a range of coefficient contrasts.

The domain is $\Omega=\{(x,y)\mid x^2+y^2<R^2\}$ with $R=1$, and the circular interface
\[
\Gamma=\{(x,y)\mid x^2+y^2=a^2\},\qquad a=0.4,
\]
divides $\Omega$ into an inner subdomain $\Omega^-=\{r<a\}$ and an outer subdomain $\Omega^+=\{a<r<R\}$, with $r=\sqrt{x^2+y^2}$. We consider the quasi-linear elliptic interface problem
\begin{equation*}
\begin{cases}
-\nabla\cdot\!\left(\beta^-(u)\nabla u\right)=f^-(x,u), & x\in\Omega^-,\\[1mm]
-\nabla\cdot\!\left(\beta^+(u)\nabla u\right)=0, & x\in\Omega^+,\\[1mm]
\llbracket u\rrbracket=0,\quad \llbracket \beta(u)\,\partial_{\mathbf n}u\rrbracket=0, & x\in\Gamma,\\[1mm]
u=0, & x\in\partial\Omega,
\end{cases}
\end{equation*}
with solution-dependent diffusion coefficients
\begin{equation*}
\beta^-(u)=k^-\left(1+0.1\,u+0.01\,u^2\right),\qquad
\beta^+(u)=k^+\left(1+0.2\,u+0.05\,u^2\right),
\end{equation*}
where the magnitudes $k^\mp>0$ set the contrast between the two subdomains. The coefficients $\beta^\pm$ are nonlinear in $u$ and discontinuous across $\Gamma$. We test the combinations of $(k^-,k^+)$ reported in Fig~\ref{fig:contrast_error_exp5_heat}, ranging from the balanced case $(1,1)$ to high-contrast cases in which the inner or the outer coefficient is larger by up to eight orders of magnitude. The source is supported in $\Omega^-$,
\begin{equation*}
f^-(x,u)=Q_0\,e^{-\nu r^2}\left(1+\sigma u^2\right),\qquad
Q_0=20,\quad \nu=25,\quad \sigma=0.2,
\end{equation*}
while $\Omega^+$ is source-free; the factor $e^{-\nu r^2}$ localizes $f^-$ near the origin, and $1+\sigma u^2$ makes it grow with $u$. Since $1+\sigma u^2$ increases with $u$, the source satisfies $\partial_u f^->0$ and therefore lies outside the strict one-sided monotonicity assumption~\eqref{eq:one-sided-monotonicity}; it falls under the relaxed regime of Remark~\ref{rmk:source}, in which the positive one-sided Lipschitz part of $f^-$ is small relative to the monotonicity of the diffusion operator. These data admit a heat-conduction interpretation, in which $u$ is the temperature, $\beta^\mp(u)$ are the temperature-dependent conductivities of the two media, $f^-$ is an internal heat source concentrated near the origin, and $u=0$ prescribes a zero temperature on the outer boundary.

Because the coefficients, source, interface, and boundary data are radially symmetric, the exact solution depends only on $r$ and satisfies the two-point boundary-value problem
\begin{equation*}
-\frac{1}{r}\frac{d}{dr}\!\left(r\,\beta(u)\,\frac{du}{dr}\right)=f(r,u),\quad r\in(0,R),
\qquad u'(0)=0,\quad u(R)=0,
\end{equation*}
where $(\beta,f)$ take their $\Omega^-$ values on $(0,a)$ and their $\Omega^+$ values on $(a,R)$, and $u$ and the radial flux $r\,\beta(u)\,u'$ are continuous at $r=a$. For each coefficient combination we solve this radial problem to high accuracy and use it as the reference solution $u_g$. Writing the equation as a first-order system in $u$ and the radial flux, we integrate it with the eighth-order Runge--Kutta method DOP853 under tight tolerances, and determine the unknown center value $u(0)$ by a shooting procedure that enforces $u(R)=0$. The interface $r=a$ is placed at an exact integration node, so that the flux variable is continued across it without smoothing the derivative kink. The radial reference was verified at three tolerance/grid levels, and the two finest DOP853 solutions were indistinguishable to the displayed precision in both radial relative $L^2$ and $L^\infty$ norms. For the representative case the shooting residual was $|U(R)|=2.3656\times10^{-16}$, and the interface continuity checks gave $|[U]|=|[F]|=0$, confirming that the reference is high-precision. The radial symmetry is used only to construct the reference: the proposed method solves the problem as a full two-dimensional interface problem.

In the primary LRaNN stage, $\Omega^-$ and $\Omega^+$ are each approximated by a $\tanh$ network with $500$ neurons, with hidden weights and biases sampled from $\mathcal N(0,1)$ and $\mathcal N(0,0.1^2)$. The Gauss--Newton iteration is run until the residual norm stagnates. The perturbation-correction stage uses an independent $\sin$ correction network with $2000$ neurons per subdomain, with hidden weights and biases sampled from $\mathcal N(0,(7\pi)^2)$ and $\mathcal N(0,1)$; it uses the default second-order correction, and the perturbation parameter $\epsilon$ is set to the final primary residual scale $\|\mathcal F(u_N)\|$. The errors are measured against the reference $u_g$ on a uniform test set covering $\Omega$.

For the representative case $(k^-,k^+)=(1,10^8)$, the primary LRaNN stage reaches $\erro_{L^2}(u_N)=1.3334\times10^{-2}$ and $\erro_{L^\infty}(u_N)=1.5883\times10^{-2}$, and the perturbation-correction stage reduces them to $\erro_{L^2}(u_h)=1.2820\times10^{-9}$ and $\erro_{L^\infty}(u_h)=2.3379\times10^{-9}$. Fig~\ref{fig:stage1_comparison_exp5_heat} shows the correction and the resulting error: Fig~\ref{fig:stage1_error_exp5_heat} shows the primary error $e_N$ together with the correction $\epsilon u_p$; Fig~\ref{fig:perturbation_solution_exp5_heat} shows the corrected error $e_h$; Fig~\ref{fig:residual_vs_iteration_exp5_heat} shows that the residual norm and $\erro_{L^2}(U)$ stagnate in the primary stage and decrease further in the perturbation-correction stage; and Fig~\ref{fig:interface_error_concise_exp5_heat} shows the primary error $e_N$ and $\epsilon u_p$ along $\Gamma$, where the two curves coincide.

Fig~\ref{fig:numerical_solutions_exp5_heat} shows the corrected solution $u_h$ for four representative combinations and illustrates how the solution distribution varies with the contrast between the two subdomains.

Fig~\ref{fig:contrast_error_exp5_heat} summarizes the relative errors over all tested combinations. The primary $\erro_{L^2}(u_N)$ ranges from about $2.2\times10^{-5}$ to about $6.0\times10^{-2}$, the largest value occurring for the balanced high-magnitude case $(k^-,k^+)=(10^4,10^4)$, where the large coefficients reduce the solution amplitude and amplify the relative error; $\erro_{L^\infty}(u_N)$ follows the same pattern. In every combination the perturbation-correction stage reduces both indicators by about four to seven orders of magnitude, bringing $\erro_{L^2}(u_h)$ into the $4.4\times10^{-10}$--$1.9\times10^{-6}$ range, with the largest corrected error again at $(k^-,k^+)=(10^4,10^4)$.

\begin{figure}[t]
	\centering
	\begin{subfigure}[t]{0.6\textwidth}
		\centering
		\includegraphics[width=\textwidth]{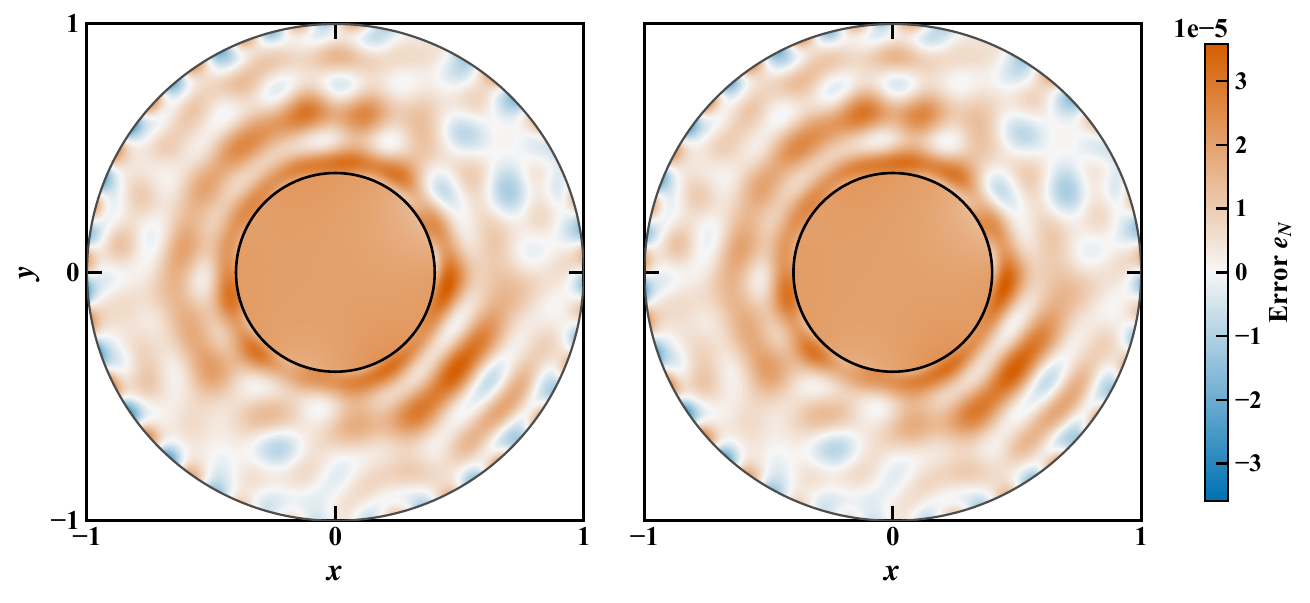}
		\caption{Left: primary error $e_N$. Right: correction $\epsilon u_p$.}
		\label{fig:stage1_error_exp5_heat}
	\end{subfigure}\hfill
	\begin{subfigure}[t]{0.36\textwidth}
		\centering
		\includegraphics[width=\textwidth]{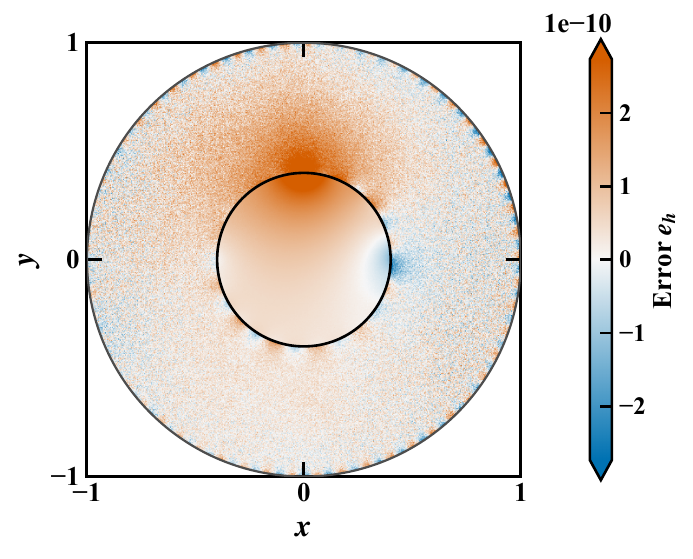}
		\caption{Pointwise error $e_h$.}
		\label{fig:perturbation_solution_exp5_heat}
	\end{subfigure}

	\par\medskip

	\begin{subfigure}[t]{0.52\textwidth}
		\centering
		\includegraphics[width=\textwidth]{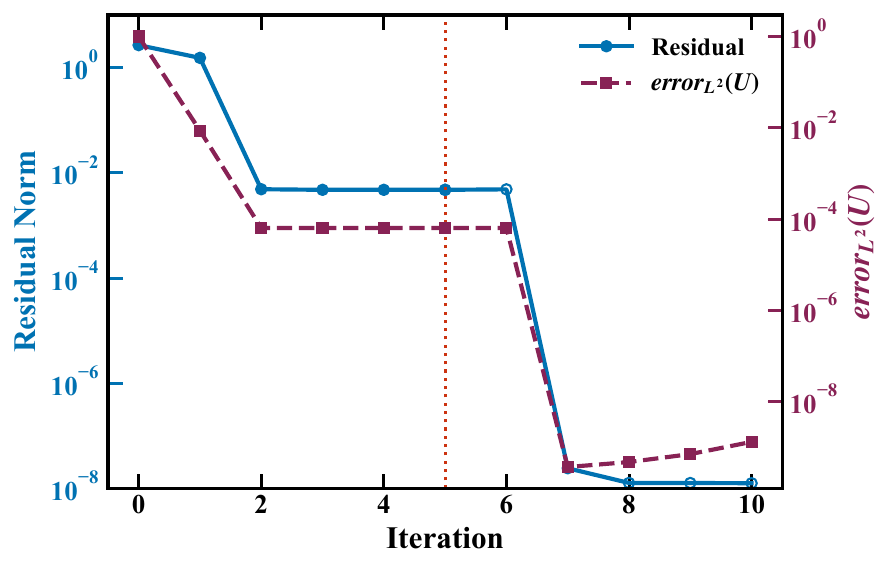}
		\caption{Residual norm and $\erro_{L^2}(U)$ versus iteration.}
		\label{fig:residual_vs_iteration_exp5_heat}
	\end{subfigure}\hfill
	\begin{subfigure}[t]{0.46\textwidth}
		\centering
		\includegraphics[width=\textwidth]{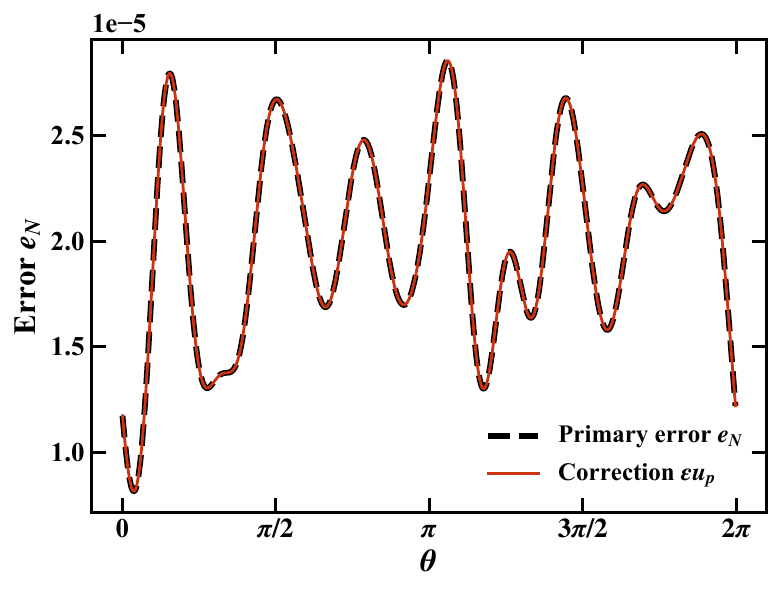}
		\caption{Primary error $e_N$ and $\epsilon u_p$ at interface $\Gamma$. Black dashed: $e_N$. Red solid: $\epsilon u_p$.}
		\label{fig:interface_error_concise_exp5_heat}
	\end{subfigure}
	\caption{Numerical results for Example~5, representative case $(k^-,k^+)=(1,10^8)$.}
	\label{fig:stage1_comparison_exp5_heat}
\end{figure}

\begin{figure}[t]
	\centering
	\begin{subfigure}[t]{0.45\textwidth}
		\includegraphics[width=\textwidth]{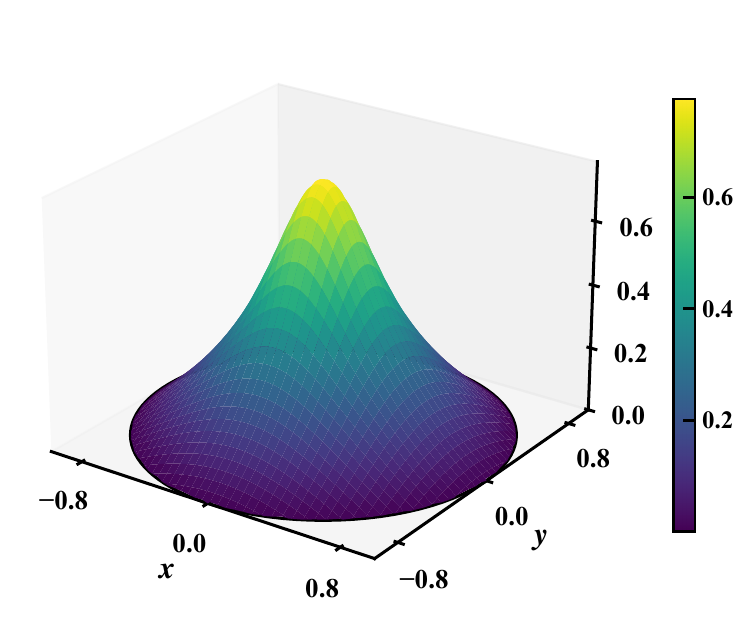}
		\caption{$(k^-,k^+)=(1,1)$.}
		\label{fig:numsol_11_exp5_heat}
	\end{subfigure}
	\hfill
	\begin{subfigure}[t]{0.45\textwidth}
		\includegraphics[width=\textwidth]{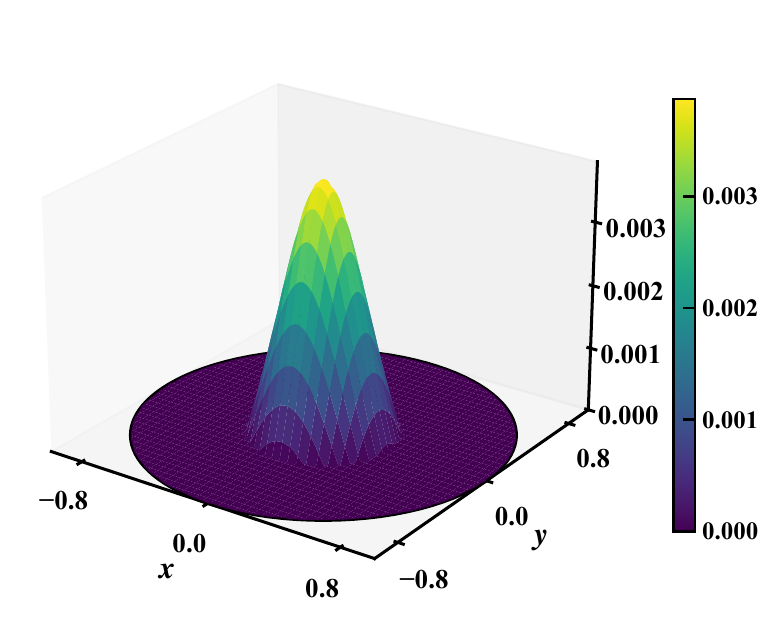}
		\caption{$(k^-,k^+)=(10^2,10^8)$.}
		\label{fig:numsol_1e4_exp5_heat}
	\end{subfigure}

	\par\medskip

	\begin{subfigure}[t]{0.45\textwidth}
	\includegraphics[width=\textwidth]{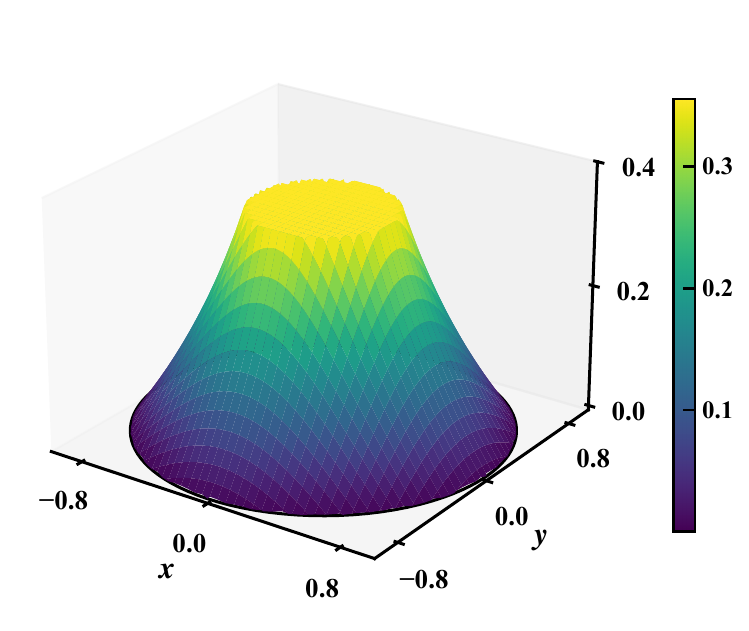}
		\caption{$(k^-,k^+)=(10^8,1)$.}
		\label{fig:numsol_1e8in_exp5_heat}
	\end{subfigure}
	\hfill
	\begin{subfigure}[t]{0.45\textwidth}
		\includegraphics[width=\textwidth]{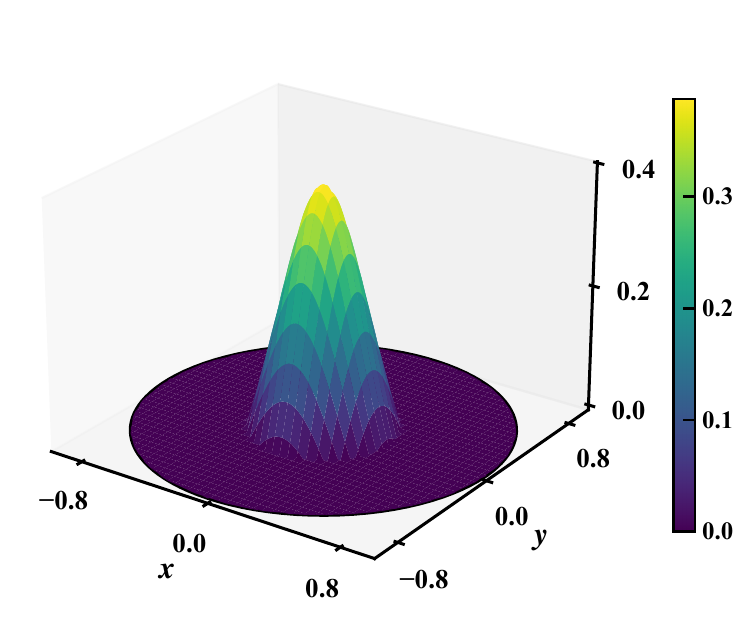}
		\caption{$(k^-,k^+)=(1,10^8)$.}
		\label{fig:numsol_1e8out_exp5_heat}
	\end{subfigure}
	\caption{Example~5. Corrected numerical solution $u_h$ for four representative coefficient combinations.}
	\label{fig:numerical_solutions_exp5_heat}
\end{figure}

\begin{figure}[t]
	\centering	\includegraphics[width=0.85\textwidth]{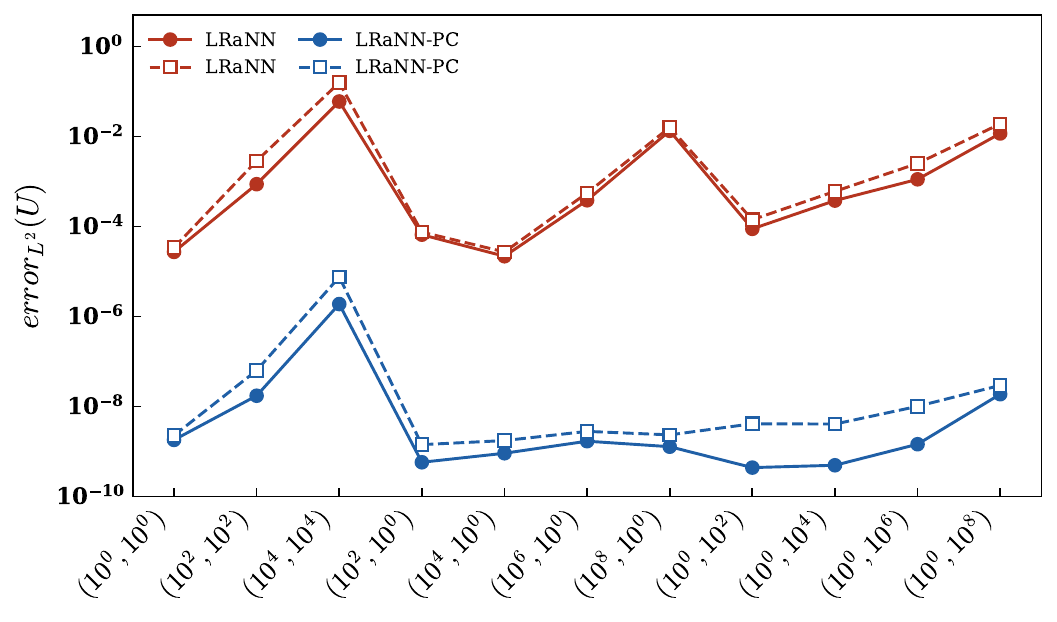}
	\caption{Example~5: $\erro_{L^2}(U)$ of the primary LRaNN stage and the corrected LRaNN-PC stage over the tested coefficient combinations $(k^-,k^+)$.}
	\label{fig:contrast_error_exp5_heat}
\end{figure}
\subsection{Benchmark Comparison with Reported IFE Results}
\label{subsec:comparison_ife}

To further assess the accuracy of the proposed perturbation-correction framework, we compare LRaNN-PC with the IFE results reported in~\cite{adjerid2025immersed}. We consider two benchmark problems from that work: one with a solution-dependent diffusion coefficient and the other with a gradient-dependent diffusion coefficient. The comparison below is based on the error values reported in~\cite{adjerid2025immersed} under the listed IFE settings; it is not intended as a degree-of-freedom or computational-cost comparison, since the IFE method and the present randomized collocation framework use different discretization parameters.

In both benchmarks, the computational domain is $\Omega=(0,1)^2$, and the circular interface
\[
\Gamma=\left\{(x,y)\ \middle|\ (x-0.719)^2+(y-0.713)^2=0.17^2\right\}
\]
divides the domain into
\[
\Omega^-=\left\{(x,y)\ \middle|\ (x-0.719)^2+(y-0.713)^2<0.17^2\right\},
\qquad
\Omega^+=\Omega\setminus\overline{\Omega^-}.
\]
The interface conditions are homogeneous,
\[
\llbracket u\rrbracket=0,\qquad
\llbracket \beta\nabla u\cdot \mathbf n\rrbracket=0,
\qquad x\in\Gamma,
\]
and the boundary data $g$ and source term $f$ are chosen from the exact solution
\begin{equation}
\label{eq:ife_exact_solution}
u(x,y)=
\begin{cases}
\dfrac{1}{10}\cos(x+3y)e^x\left((x-0.719)^2+(y-0.713)^2-0.17^2\right),
& (x,y)\in\Omega^-,
\\[2mm]
\cos(x+3y)e^x\left((x-0.719)^2+(y-0.713)^2-0.17^2\right),
& (x,y)\in\Omega^+.
\end{cases}
\end{equation}
Since the present paper uses relative error indicators, we convert the published absolute errors to $\erro_{L^2}(U)$ and $\erro_{H^1}(U)$ according to \eqref{eq:relative_l2_linf_errors} and \eqref{eq:relative_h1_error}. For the proposed method, $U=u_N$ in the primary LRaNN stage and $U=u_h$ in the LRaNN-PC stage. The settings $p=4$, $N=40$ and $p=2$, $N=40$ are the finest reported in~\cite{adjerid2025immersed} for these two benchmarks; we therefore compare against the highest-accuracy IFE results available in that reference.

\paragraph{Solution-dependent diffusion}
The first benchmark is the quasi-linear elliptic interface problem
\begin{equation}
\label{eq:ife_solution_dependent_problem}
\begin{cases}
-\nabla\cdot\left(\beta(u)\nabla u\right)=f,
& x\in\Omega\setminus\Gamma,
\\
u=g,
& x\in\partial\Omega,
\end{cases}
\end{equation}
where
\begin{equation}
\label{eq:ife_solution_dependent_beta}
\beta(u)=
\begin{cases}
10\left(5+(1+100u^2)^{-1/2}\right),
& x\in\Omega^-,
\\[1mm]
5+(1+u^2)^{-1/2},
& x\in\Omega^+.
\end{cases}
\end{equation}
This problem corresponds to Example~4.6 in~\cite{adjerid2025immersed}. The reported IFE result with $p=4$ and $N=40$ gives $\erro_{L^2}(U)=8.3655\times10^{-10}$ and $\erro_{H^1}(U)=6.8076\times10^{-8}$ after normalization by the exact solution. For the same benchmark, the primary LRaNN approximation gives $\erro_{L^2}(u_N)=1.1286\times10^{-5}$ and $\erro_{H^1}(u_N)=4.6791\times10^{-5}$. After the perturbation-correction stage, the errors decrease to $\erro_{L^2}(u_h)=6.4172\times10^{-12}$ and $\erro_{H^1}(u_h)=8.6701\times10^{-12}$. Under this reported IFE setting, the corrected LRaNN-PC errors are smaller than the converted IFE error values.

\paragraph{Gradient-dependent diffusion}
The second benchmark is a quasi-linear elliptic interface problem with a gradient-dependent diffusion coefficient,
\begin{equation}
\label{eq:ife_gradient_dependent_problem}
\begin{cases}
-\nabla\cdot\left(\beta(u_x,u_y)\nabla u\right)=f,
& x\in\Omega\setminus\Gamma,
\\
u=g,
& x\in\partial\Omega,
\end{cases}
\end{equation}
where
\begin{equation}
\label{eq:ife_gradient_dependent_beta}
\beta(u_x,u_y)=
\begin{cases}
10\left(20+(1+100u_x^2+100u_y^2)^{-1/2}\right),
& x\in\Omega^-,
\\[1mm]
20+(1+u_x^2+u_y^2)^{-1/2},
& x\in\Omega^+.
\end{cases}
\end{equation}
The same exact solution~\eqref{eq:ife_exact_solution} is used. This problem corresponds to Example~4.7 in~\cite{adjerid2025immersed} and is used here as a numerical extension beyond the solution-dependent diffusion model. The reported IFE result with $p=2$ and $N=40$ gives $\erro_{L^2}(U)=1.3737\times10^{-5}$ and $\erro_{H^1}(U)=6.7037\times10^{-4}$ after normalization by the exact solution. For the same benchmark, the primary LRaNN approximation gives $\erro_{L^2}(u_N)=4.7279\times10^{-5}$ and $\erro_{H^1}(u_N)=2.0824\times10^{-4}$. After the perturbation-correction stage, the errors decrease to $\erro_{L^2}(u_h)=9.5628\times10^{-12}$ and $\erro_{H^1}(u_h)=8.2566\times10^{-12}$. Under this reported IFE setting, the corrected LRaNN-PC errors are smaller than the converted IFE error values.

\begin{table}[h]
\centering
\caption{
Benchmark comparison with the IFE results reported in~\cite{adjerid2025immersed}. 
The published absolute IFE errors are converted to the relative indicators $\erro_{L^2}(U)$ and $\erro_{H^1}(U)$ used in this paper.
}
\label{tab:comparison_ife}
\small
\begin{tabular}{llccc}
\toprule
Benchmark & Method & Setting/Stage & $\erro_{L^2}(U)$ & $\erro_{H^1}(U)$ \\
\midrule
Solution-dependent diffusion
& IFE~\cite{adjerid2025immersed} & $p=4,\ N=40$
& $8.3655\times10^{-10}$ & $6.8076\times10^{-8}$ \\
Solution-dependent diffusion
& LRaNN & Stage 1
& $1.1286\times10^{-5}$ & $4.6791\times10^{-5}$ \\
Solution-dependent diffusion
& LRaNN-PC & Stage 2
& $6.4172\times10^{-12}$ & $8.6701\times10^{-12}$ \\
\midrule
Gradient-dependent diffusion
& IFE~\cite{adjerid2025immersed} & $p=2,\ N=40$
& $1.3737\times10^{-5}$ & $6.7037\times10^{-4}$ \\
Gradient-dependent diffusion
& LRaNN & Stage 1
& $4.7279\times10^{-5}$ & $2.0824\times10^{-4}$ \\
Gradient-dependent diffusion
& LRaNN-PC & Stage 2
& $9.5628\times10^{-12}$ & $8.2566\times10^{-12}$ \\
\bottomrule
\end{tabular}
\end{table}

The comparison in Table~\ref{tab:comparison_ife} should be interpreted only as an accuracy comparison with the reported IFE data under the listed settings. It is not a degree-of-freedom, runtime, or computational-cost comparison. The primary LRaNN approximation is not uniformly more accurate than the reported IFE results; for example, in the gradient-dependent benchmark, its $\erro_{L^2}(u_N)$ is larger than the corresponding reported IFE value. After the perturbation-correction stage, however, the corrected errors are reduced to the $10^{-11}$--$10^{-12}$ level in both benchmarks. These results show that the perturbation-correction stage remains effective on the two published benchmark configurations considered here, including the gradient-dependent case used as a numerical extension.
\subsection{Additional extensions}
\label{subsec:extensions}

We close this section with two additional tests that illustrate the applicability of the same perturbation-correction method beyond the steady solution-dependent elliptic setting considered in the main numerical examples. The first test is a moving-interface problem formulated in space--time, and the second test involves a gradient-dependent diffusion coefficient.

\paragraph{Moving-interface problem.}
We consider a quasi-linear parabolic interface problem with a prescribed time-dependent interface $\Gamma(t)$. The two subdomains have different solution-dependent diffusion laws,
\[
\beta^+(u)=1+u^2,\qquad \beta^-(u)=e^u+1,
\]
and the governing equations are
\[
\begin{cases}
u^+_t-\nabla\cdot\left(\beta^+(u^+)\nabla u^+\right)=f^+(x,u^+),
& (x,t)\in\Omega^+(t)\times[0,T],\\[1mm]
u^-_t-\nabla\cdot\left(\beta^-(u^-)\nabla u^-\right)=f^-(x,u^-),
& (x,t)\in\Omega^-(t)\times[0,T],\\[1mm]
\llbracket u\rrbracket=w, & x\in\Gamma(t),\\[1mm]
\llbracket \beta(u)\partial_{\mathbf n}u\rrbracket=v, & x\in\Gamma(t),\\[1mm]
u=g, & (x,t)\in\partial\Omega\times[0,T],\\[1mm]
u^\pm(x,0)=u_0^\pm(x), & x\in\Omega^\pm(0).
\end{cases}
\]
The spatial domain is $\Omega=[-1,1]\times[-1,1]$, and the moving interface is prescribed by
\[
\Gamma(t)=\left\{(x,y)\mid \sqrt{x^2+y^2}=r(t)\right\},
\qquad r(t)=0.5t+0.5.
\]
The source terms, boundary data, and interface data are generated from the exact solution
\[
u(x,y,t)=
\begin{cases}
e^{-t}\sin(\pi x)\sin(\pi y), & (x,y,t)\in\Omega^+(t)\times[0,T],\\[1mm]
t(x^2+y^2), & (x,y,t)\in\Omega^-(t)\times[0,T].
\end{cases}
\]
The initial data $u_0^\pm$ are obtained by evaluating this exact solution at $t=0$ and are imposed through an initial-time residual in the space--time least-squares functional. 

We solve this problem on $t\in[0,0.2]$ using a space--time LRaNN representation. The primary approximation uses $\tanh$ features in $(x,y,t)$, and the perturbation-correction stage uses an independent sinusoidal space--time correction basis. Fig~\ref{fig:stage1_comparison_exp5} reports the results at $t=0.05$, $0.10$, and $0.20$. The first row shows the primary error $e_N$ together with the scaled correction $\epsilon u_p$ at the three time levels. The second row shows the corrected error $e_h$. In all three snapshots, the corrected error is reduced to the $10^{-6}$--$10^{-5}$ level.

\begin{figure}[t]
	\centering
	\begin{subfigure}[t]{0.32\textwidth}
		\includegraphics[width=\textwidth]{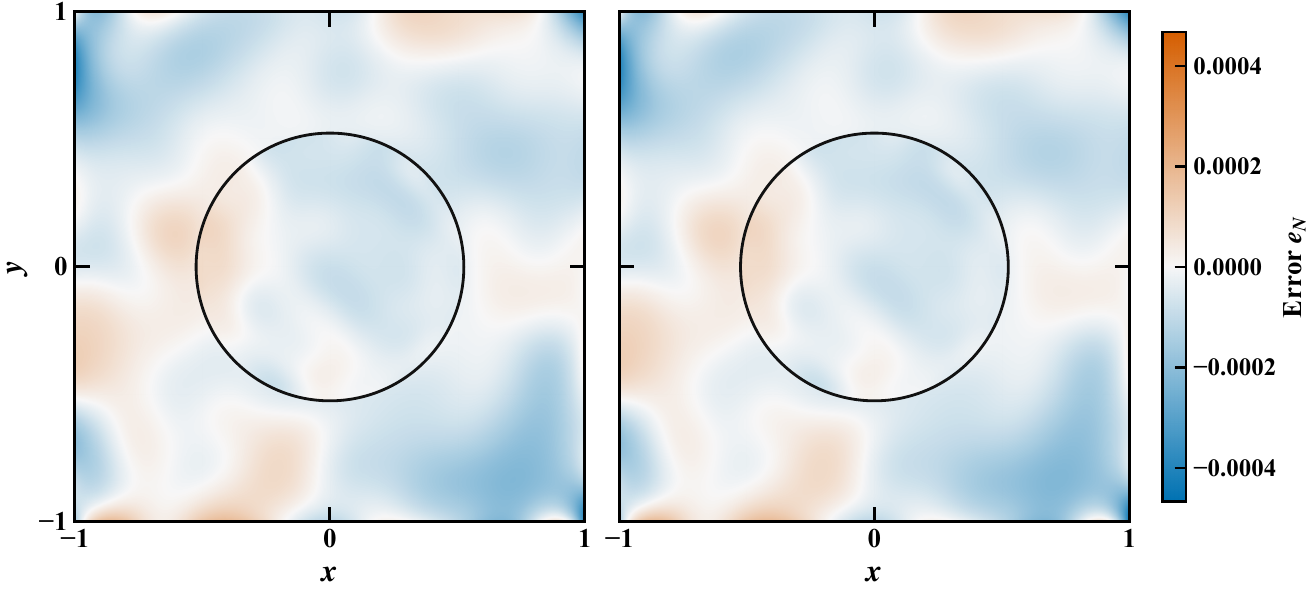}
		\caption{$t=0.05$: Left: primary error $e_N$. Right: correction $\epsilon u_p$.}
	\end{subfigure}
	\hfill
	\begin{subfigure}[t]{0.32\textwidth}
		\includegraphics[width=\textwidth]{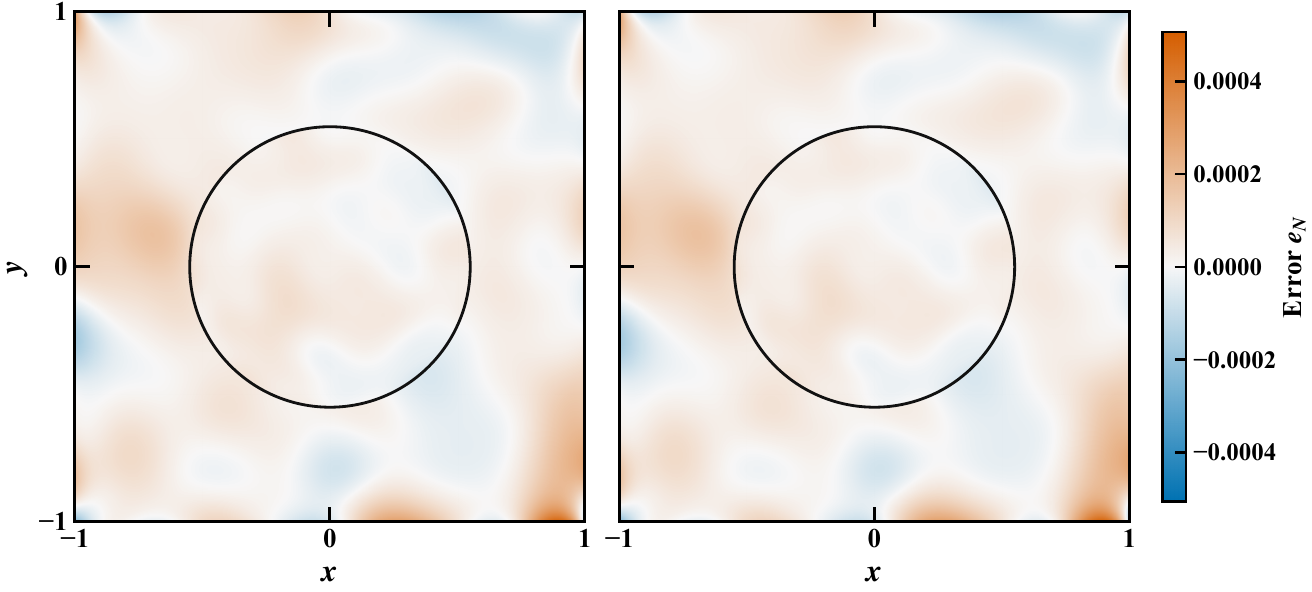} 
		\caption{$t=0.10$: Left: primary error $e_N$. Right: correction $\epsilon u_p$.}
	\end{subfigure}
	\hfill
	\begin{subfigure}[t]{0.32\textwidth}
		\includegraphics[width=\textwidth]{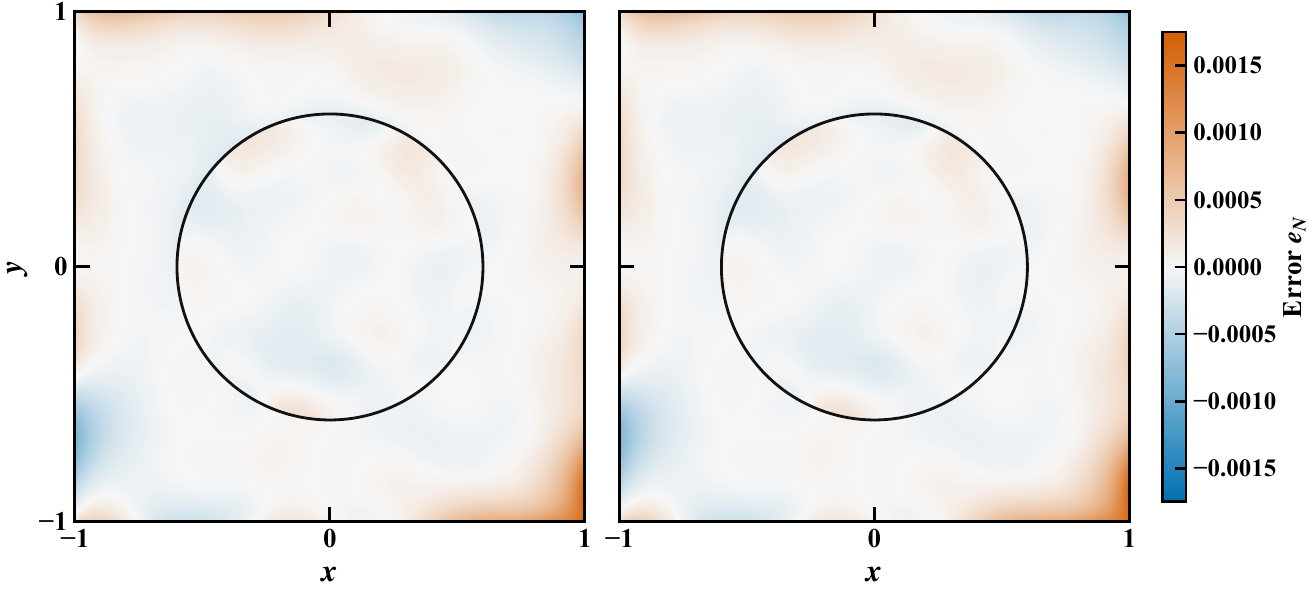} 
		\caption{$t=0.20$: Left: primary error $e_N$. Right: correction $\epsilon u_p$.}
	\end{subfigure}

	\par\medskip

	\begin{subfigure}[t]{0.32\textwidth}
		\includegraphics[width=\textwidth]{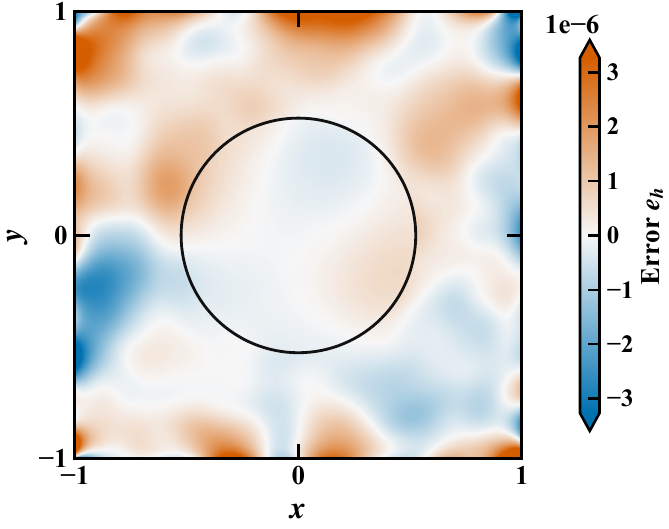} 
		\caption{$t=0.05$: pointwise error $e_h$.}
	\end{subfigure}
	\hfill
	\begin{subfigure}[t]{0.32\textwidth}
		\includegraphics[width=\textwidth]{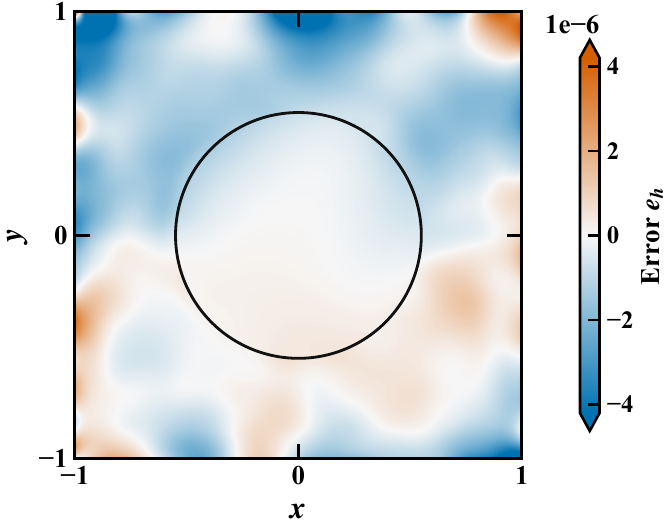} 
		\caption{$t=0.10$: pointwise error $e_h$.}
	\end{subfigure}
	\hfill
	\begin{subfigure}[t]{0.32\textwidth}
		\includegraphics[width=\textwidth]{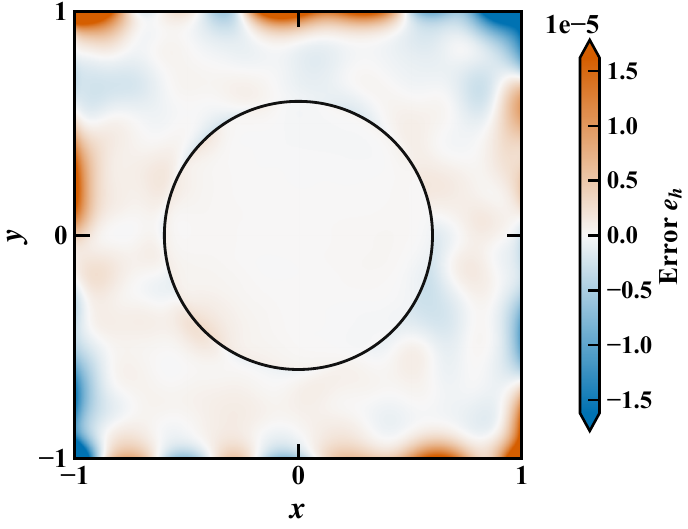} 
		\caption{$t=0.20$: pointwise error $e_h$.}
	\end{subfigure}

	\caption{Moving-interface extension. Top row: primary error $e_N$ and scaled correction $\epsilon u_p$ at three time levels. Bottom row: corrected error $e_h$.}
	\label{fig:stage1_comparison_exp5}
\end{figure}

\paragraph{Gradient-dependent diffusion}
As a second extension, we consider a quasi-linear elliptic interface problem in which the diffusion coefficient depends on the gradient magnitude in one subdomain and on the solution value in the other. The computational domain is $\Omega=[-1,1]\times[-1,1]$, and the circular interface
\[
\Gamma=\{(x,y)\mid x^2+y^2=1/4\}
\]
separates the inner subdomain $\Omega^-=\{x^2+y^2<1/4\}$ from the outer subdomain $\Omega^+=\Omega\setminus\overline{\Omega^-}$. The diffusion coefficients are
\[
\beta^+=1+|\nabla u|^2,\qquad \beta^-=1+u^2.
\]
The source terms and interface data are chosen so that the exact solution is
\[
u(x,y)=
\begin{cases}
0.5\sin(\pi x)\sin(\pi y)+0.25, & (x,y)\in\Omega^+,\\[1mm]
0.25-(x^2+y^2), & (x,y)\in\Omega^-.
\end{cases}
\]
The same two-stage LRaNN-PC procedure is applied. Fig~\ref{fig:stage1_comparison_exp6} shows the primary error $e_N$ and the scaled correction $\epsilon u_p$ as side-by-side surfaces in panel~(a), together with the corrected error field, the convergence history, and the interface trace. The primary maximum pointwise error is about $1.5\times10^{-4}$, while the corrected maximum pointwise error is about $6\times10^{-9}$. The residual norm and $\erro_{L^2}(U)$ also decrease further after the perturbation-correction stage. Along $\Gamma$, the primary error and the scaled correction overlap, and the interface $E_{L^\infty}$ error decreases from about $6\times10^{-4}$ to the $10^{-8}$ level.

\begin{figure}[t]
	\centering
	\begin{subfigure}[t]{0.65\textwidth}
		\centering
		\includegraphics[width=\textwidth]{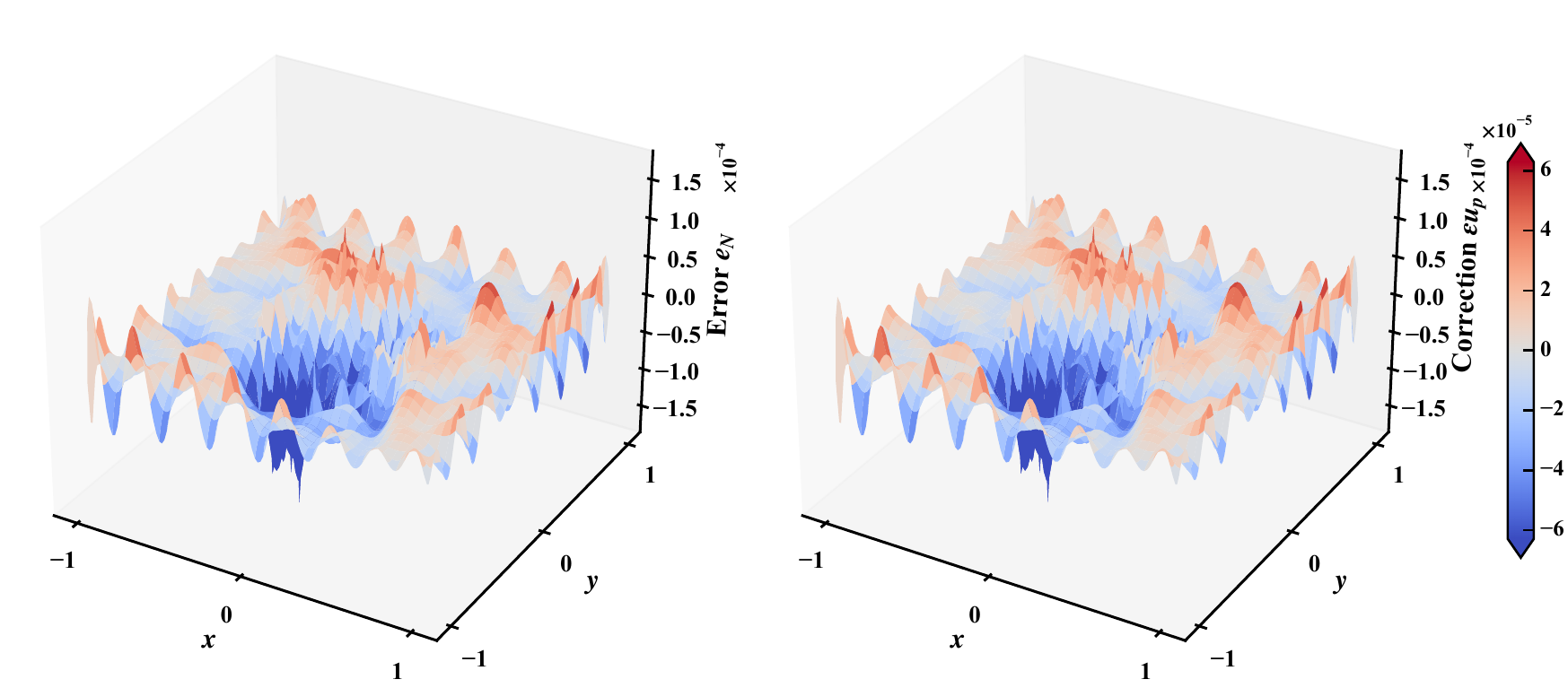}
		\caption{Left: primary error $e_N$. Right: correction $\epsilon u_p$.}
		\label{fig:error_comparison_exp6}
	\end{subfigure}\hfill
	\begin{subfigure}[t]{0.33\textwidth}
		\centering
		\includegraphics[width=\textwidth]{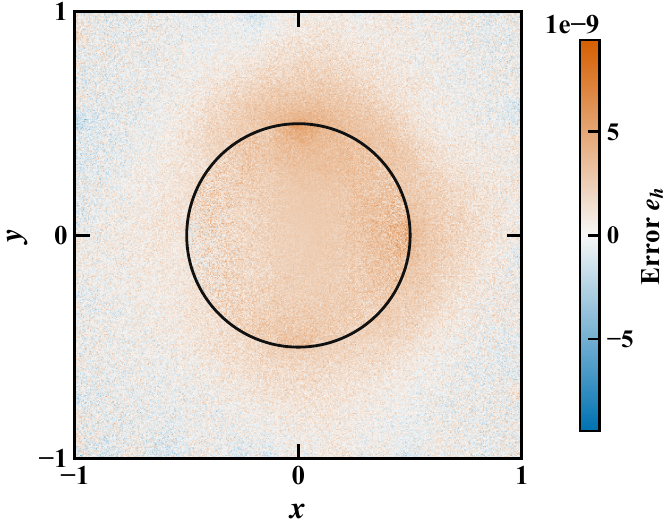} 
		\caption{Pointwise error $e_h$.}
		\label{fig:perturbation_solution_exp6}
	\end{subfigure}

	\par\medskip

	\begin{subfigure}[t]{0.52\textwidth}
		\centering
		\includegraphics[width=\textwidth]{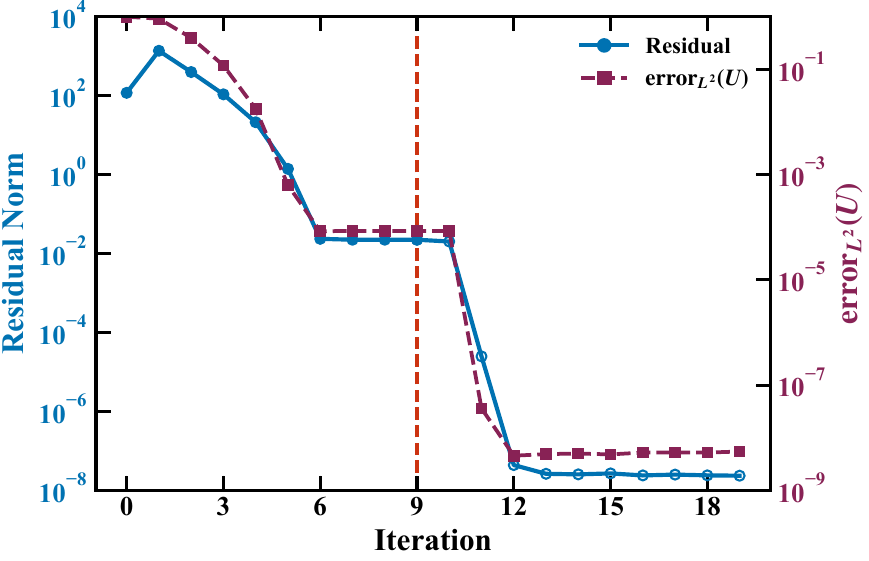} 
		\caption{Residual norm and $\erro_{L^2}(U)$ versus iteration.}
		\label{fig:residual_vs_iteration_exp6}
	\end{subfigure}\hfill
	\begin{subfigure}[t]{0.46\textwidth}
		\centering
		\includegraphics[width=\textwidth]{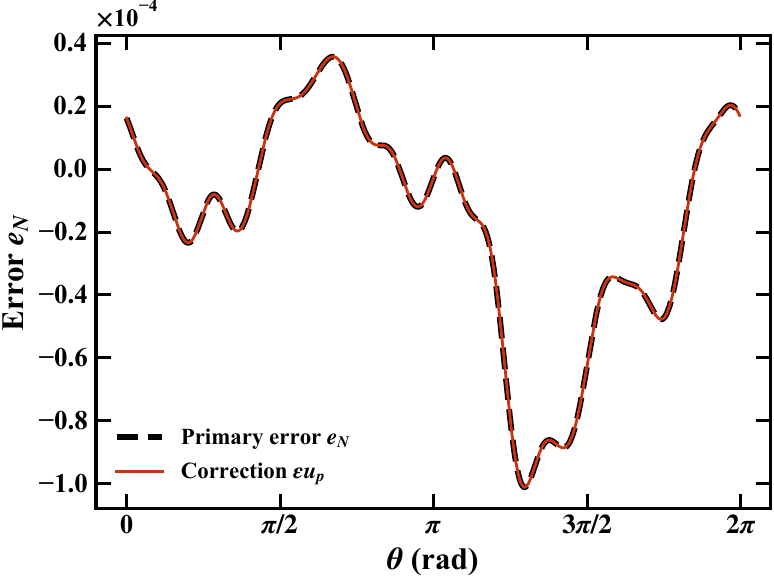} 
		\caption{Primary error $e_N$ and $\epsilon u_p$ along $\Gamma$.}
		\label{fig:interface_error_concise_exp6}
	\end{subfigure}
	\caption{Gradient-dependent diffusion extension.}
	\label{fig:stage1_comparison_exp6}
\end{figure}

\section{Conclusion}
\label{sec:conclusion}
In this work, we have proposed a two-stage perturbation-correction framework based on LRaNNs for solution-dependent quasi-linear elliptic interface problems, and have numerically examined its extension to gradient-dependent and time-dependent interface settings. The primary LRaNN stage uses Gauss--Newton iteration to compute a primary approximation that captures the main features on the interface and within each subdomain. The perturbation-correction stage then computes a residual-driven correction on a new randomized basis to reduce the residual error left by the primary approximation. By explicitly targeting leading residual modes, the method reduces the residual left by the primary approximation in the tested numerical experiments.

The numerical tests cover static and moving interfaces, straight, multiple, and curved interface geometries, high-contrast coefficients, and nonlinear diffusion depending on $u$, on $|\nabla u|$, and on both. They include a physically motivated problem that does not admit a closed-form solution, assessed against a high-accuracy numerical reference, and a benchmark comparison with published IFE results. In these tests, the primary LRaNN stage may stagnate at a nonzero residual level, whereas the perturbation-correction stage reduces the relative $L^2$ error by about four to seven orders of magnitude in the tested examples relative to the primary approximation, consistent with the residual-reduction mechanism discussed in Sec~\ref{sec:error analysis}. The local randomized representation imposes the nonlinear interface jump conditions through the least-squares objective and does not require an interface-fitted mesh.

The residual-controlled analysis applies to the model problem considered in this paper under the stated sufficient assumptions. At the algorithmic level, the correction step can also be formulated for broader nonlinear interface problems when the residual operator is well defined and locally expandable. Solutions with strong local singularities are limited by the approximation error rather than the optimization error and are beyond the scope of the present study (Remark~\ref{rmk:error-sources}). In future work, we plan to extend the perturbation-correction strategy to further nonlinear PDE problems with nonconvex landscapes, and to enrich the trial space for low-regularity solutions.

\appendix
\section{Sensitivity to the random initialization and the algorithmic parameters}
\label{app:sensitivity}

The studies in this appendix use the configuration of Example~1: a primary $\tanh$ network with $m=100$ neurons per subdomain, with hidden weights and biases sampled from $\mathcal N(0,1)$ and $\mathcal N(0,0.1^2)$, and an independent $\sin$ correction network. In each study one parameter group is varied while the others are kept at their default values. We report the primary relative errors $\erro_{L^2}(u_N)$ and $\erro_{L^\infty}(u_N)$ and the corrected relative errors $\erro_{L^2}(u_h)$ and $\erro_{L^\infty}(u_h)$.

\subsection{Initialization of the primary network}
\label{app:init-primary}

Table~\ref{tab:app-primary-init} varies the sampling of the primary hidden weights and biases. The scale of the hidden weights is the dominant factor for the primary stage: $\erro_{L^2}(u_N)$ is smallest near unit weight scale and grows as the scale is reduced or increased, rising to $8.8\times10^{-3}$ when the weight standard deviation is set to $4$. The bias scale and the choice between Gaussian and uniform sampling have a comparatively small effect. After the correction stage, $\erro_{L^2}(u_h)$ returns to the $10^{-12}$ level in every case except the most severely over-scaled one ($\mathbf w_j\sim\mathcal N(0,4^2)$), where the primary approximation is too inaccurate for the correction to recover full precision.

\begin{table}[ht]
	\centering
	\caption{Effect of the primary-network initialization (Example~1). The hidden weights $\mathbf w_j$ and biases $b_j$ of the primary $\tanh$ network are sampled as listed; all other settings are at their default values. The first row is the default configuration.}
	\label{tab:app-primary-init}
	\begin{tabular}{cccccc}
		\toprule
		$\mathbf w_j$ & $b_j$ & $\erro_{L^2}(u_N)$ & $\erro_{L^\infty}(u_N)$ & $\erro_{L^2}(u_h)$ & $\erro_{L^\infty}(u_h)$ \\
		\midrule
		$\mathcal N(0,1)$    & $\mathcal N(0,0.1^2)$ & $2.9301\times 10^{-7}$ & $3.7504\times 10^{-7}$ & $1.5408\times 10^{-12}$ & $1.8248\times 10^{-12}$ \\
		$\mathcal N(0,0.5^2)$ & $\mathcal N(0,0.1^2)$ & $4.0248\times 10^{-6}$ & $8.2858\times 10^{-6}$ & $6.4062\times 10^{-10}$ & $7.7871\times 10^{-10}$ \\
		$\mathcal N(0,2^2)$   & $\mathcal N(0,0.1^2)$ & $7.5342\times 10^{-5}$ & $1.5549\times 10^{-4}$ & $1.4118\times 10^{-9}$  & $2.8356\times 10^{-9}$  \\
		$\mathcal N(0,4^2)$   & $\mathcal N(0,0.1^2)$ & $8.8209\times 10^{-3}$ & $1.8976\times 10^{-2}$ & $4.5236\times 10^{-6}$  & $5.9801\times 10^{-6}$  \\
		$\mathcal N(0,1)$    & $\mathcal N(0,0.5^2)$ & $7.9479\times 10^{-8}$ & $1.5301\times 10^{-7}$ & $1.5362\times 10^{-12}$ & $1.3835\times 10^{-12}$ \\
		$\mathcal N(0,1)$    & $\mathcal N(0,1)$     & $8.1993\times 10^{-8}$ & $1.1305\times 10^{-7}$ & $1.2273\times 10^{-11}$ & $1.2556\times 10^{-11}$ \\
		$U(-1,1)$            & $U(-0.1,0.1)$         & $8.8145\times 10^{-7}$ & $1.4652\times 10^{-6}$ & $7.2706\times 10^{-11}$ & $9.3853\times 10^{-11}$ \\
		$U(-2,2)$            & $U(-0.1,0.1)$         & $8.9742\times 10^{-6}$ & $2.0117\times 10^{-5}$ & $1.2387\times 10^{-12}$ & $1.9669\times 10^{-12}$ \\
		\bottomrule
	\end{tabular}
\end{table}

\subsection{Initialization of the correction network}
\label{app:init-correction}

Table~\ref{tab:app-correction-init} varies the weight magnitude of the $\sin$ correction network, written as $\mathbf w_j\sim\mathcal N(0,(c\pi)^2)$. This magnitude sets the frequency content of the correction basis. At $c=1$ the basis cannot fully represent the high-frequency primary residual and $\erro_{L^2}(u_h)$ stops at $2.9\times10^{-10}$; for $c\ge2.5$ the corrected error reaches the $10^{-12}$ level and changes little with further increase, while the effective rank of the correction Jacobian grows with $c$. The default $c=5$ lies in this saturated regime.

\begin{table}[ht]
	\centering
	\caption{Effect of the correction-network initialization (Example~1). The hidden weights of the $\sin$ correction network are sampled as $\mathbf w_j\sim\mathcal N(0,(c\pi)^2)$ with the listed $c$, and the biases as $b_j\sim\mathcal N(0,1)$. The primary stage is identical for all rows, with $\erro_{L^2}(u_N)=2.9301\times10^{-7}$. Here $r$ is the effective rank of the correction Jacobian and the last column the number of correction iterations. The default is $c=5$.}
	\label{tab:app-correction-init}
	\begin{tabular}{ccccc}
		\toprule
		$c$ & $\erro_{L^2}(u_h)$ & $\erro_{L^\infty}(u_h)$ & $r$ & Iter. \\
		\midrule
		$1$   & $2.8723\times 10^{-10}$ & $4.4754\times 10^{-10}$ & $323$  & $5$  \\
		$2.5$ & $1.5592\times 10^{-12}$ & $1.9343\times 10^{-12}$ & $620$  & $10$ \\
		$5$   & $1.5408\times 10^{-12}$ & $1.8248\times 10^{-12}$ & $1033$ & $4$  \\
		$7.5$ & $1.5408\times 10^{-12}$ & $1.8288\times 10^{-12}$ & $1346$ & $4$  \\
		\bottomrule
	\end{tabular}
\end{table}

\subsection{Penalty weights}
\label{app:penalty}

Table~\ref{tab:app-penalty} varies the boundary and interface penalty weights relative to the interior weights $\omega_{\Omega^\pm}=1$. The weights are chosen so that the boundary and interface residual blocks are comparable in magnitude to the interior residual. The primary stage is most accurate for $\omega\in[10^2,10^3]$ and degrades for much smaller or much larger weights, whereas the corrected error stays at the $10^{-12}$ level across the whole range, including the asymmetric choices. We use $\omega_{\partial\Omega}=\omega_\Gamma=10^2$ as the default.

\begin{table}[ht]
	\centering
	\caption{Effect of the penalty weights (Example~1). The boundary weight $\omega_{\partial\Omega}$ and the interface weight $\omega_\Gamma:=\omega_{\Gamma_d}=\omega_{\Gamma_n}$ are varied; the interior weights are fixed at $\omega_{\Omega^\pm}=1$. The default is $\omega_{\partial\Omega}=\omega_\Gamma=10^2$.}
	\label{tab:app-penalty}
	\begin{tabular}{cccccc}
		\toprule
		$\omega_{\partial\Omega}$ & $\omega_\Gamma$ & $\erro_{L^2}(u_N)$ & $\erro_{L^\infty}(u_N)$ & $\erro_{L^2}(u_h)$ & $\erro_{L^\infty}(u_h)$ \\
		\midrule
		$10^0$ & $10^0$ & $1.3416\times 10^{-5}$ & $1.9402\times 10^{-5}$ & $1.1711\times 10^{-12}$ & $1.0843\times 10^{-12}$ \\
		$10^1$ & $10^1$ & $1.1519\times 10^{-6}$ & $1.8095\times 10^{-6}$ & $1.1471\times 10^{-12}$ & $9.7022\times 10^{-13}$ \\
		$10^2$ & $10^2$ & $2.9301\times 10^{-7}$ & $3.7504\times 10^{-7}$ & $1.5408\times 10^{-12}$ & $1.8248\times 10^{-12}$ \\
		$10^3$ & $10^3$ & $2.3874\times 10^{-7}$ & $1.6865\times 10^{-7}$ & $6.0417\times 10^{-12}$ & $7.5002\times 10^{-12}$ \\
		$10^4$ & $10^4$ & $1.6461\times 10^{-6}$ & $8.9142\times 10^{-7}$ & $5.6941\times 10^{-12}$ & $6.7352\times 10^{-12}$ \\
		$10^3$ & $10^2$ & $3.4628\times 10^{-7}$ & $4.2042\times 10^{-7}$ & $5.8845\times 10^{-12}$ & $6.8798\times 10^{-12}$ \\
		$10^2$ & $10^3$ & $3.0067\times 10^{-7}$ & $4.5064\times 10^{-7}$ & $1.4259\times 10^{-12}$ & $1.4442\times 10^{-12}$ \\
		\bottomrule
	\end{tabular}
\end{table}

\subsection{Gauss--Newton convergence tolerance}
\label{app:gn-tol}

Table~\ref{tab:app-gn-tol} varies the Gauss--Newton convergence tolerance $\delta_0$ in the stopping criterion~\eqref{eq:Netwon_converge}. Reducing $\delta_0$ below $10^{-4}$ does not change the corrected error, but it increases the number of correction iterations from $4$ to $10$. A tolerance of $10^{-4}$ is therefore sufficient.

\begin{table}[ht]
	\centering
	\caption{Effect of the Gauss--Newton convergence tolerance (Example~1). The iteration is stopped when the relative change in the residual norm falls below $\delta_0$, as in~\eqref{eq:Netwon_converge}. The primary stage is identical for all rows. The last column is the number of correction iterations. The default is $\delta_0=10^{-4}$.}
	\label{tab:app-gn-tol}
	\begin{tabular}{cccc}
		\toprule
		$\delta_0$ & $\erro_{L^2}(u_h)$ & $\erro_{L^\infty}(u_h).$ \\
		\midrule
		$10^{-2}$ & $1.5408\times 10^{-12}$ & $1.8248\times 10^{-12}$  \\
		$10^{-3}$ & $1.5408\times 10^{-12}$ & $1.8248\times 10^{-12}$  \\
		$10^{-4}$ & $1.5408\times 10^{-12}$ & $1.8248\times 10^{-12}$ \\
		$10^{-6}$ & $1.5408\times 10^{-12}$ & $1.8246\times 10^{-12}$  \\
		$10^{-8}$ & $1.5408\times 10^{-12}$ & $1.8246\times 10^{-12}$ \\
		\bottomrule
	\end{tabular}
\end{table}

\bibliographystyle{siamplain}
\bibliography{reference}

@book{brenner2013interfacial,
  title={Interfacial transport processes and rheology},
  author={Brenner, Howard},
  year={2013},
  publisher={Elsevier}
}

@Article{material_mechanics,
    title = {A \mbox{P}etrov-\mbox{G}alerkin finite element interface method for interface problems with \mbox{B}loch-periodic boundary conditions and its application in phononic crystals},
    journal = {Journal of Computational Physics},
    volume = {393},
    pages = {117-138},
    year = {2019},
    issn = {0021-9991},
    author = {Liqun Wang and Hui Zheng and Xin Lu and Liwei Shi},
}

@article{kirk2023nonlinear,
  title={Nonlinear electrochemical impedance spectroscopy for lithium-ion battery model parameterization},
  author={Kirk, Toby L and LewisDouglas, Adam and Howey, David and Please, Colin P and Chapman, S Jon},
  journal={Journal of The Electrochemical Society},
  volume={170},
  number={1},
  pages={010514},
  year={2023},
  publisher={IOP Publishing}
}

@article{donato2015homogenization,
  title={Homogenization of diffusion problems with a nonlinear interfacial resistance},
  author={Donato, Patrizia and Le Nguyen, Kim Hang},
  journal={Nonlinear Differential Equations and Applications NoDEA},
  volume={22},
  number={5},
  pages={1345--1380},
  year={2015},
  publisher={Springer}
}

@article{zeng2013efficient,
  title={Efficient conservative numerical schemes for 1\mbox{D} nonlinear spherical diffusion equations with applications in battery modeling},
  author={Zeng, Yi and Albertus, Paul and Klein, Reinhardt and Chaturvedi, Nalin and Kojic, Aleksandar and Bazant, Martin Z and Christensen, Jake},
  journal={Journal of The Electrochemical Society},
  volume={160},
  number={9},
  pages={A1565},
  year={2013},
  publisher={IOP Publishing}
}

@article{teigen2009diffuse,
  title={A diffuse-interface approach for modeling transport, diffusion and adsorption/desorption of material quantities on a deformable interface},
  author={Teigen, Knut Erik and Li, Xiangrong and Lowengrub, John and Wang, Fan and Voigt, Axel},
  journal={Communications in Mathematical Sciences},
  volume={4},
  number={7},
  pages={1009},
  year={2009}
}

@article{IBM1977,
  title={Numerical analysis of blood flow in the heart},
  author={Peskin, Charles S},
  journal={Journal of Computational Physics},
  volume={25},
  number={3},
  pages={220--252},
  year={1977},
  publisher={Elsevier}
}

@article{gfm2003,
  title={Convergence of the ghost fluid method for elliptic equations with interfaces},
  author={Liu, Xu-Dong and Sideris, Thomas},
  journal={Mathematics of Computation},
  volume={72},
  number={244},
  pages={1731--1746},
  year={2003}
}

@article{MIB2014,
  title={\mbox{MIB} \mbox{G}alerkin method for elliptic interface problems},
  author={Xia, Kelin and Zhan, Meng and Wei, Guo-Wei},
  journal={Journal of Computational and Applied Mathematics},
  volume={272},
  pages={195--220},
  year={2014},
  publisher={Elsevier}
}

@article{babuvska1970finite,
  title={The finite element method for elliptic equations with discontinuous coefficients},
  author={Babu{\v{s}}ka, Ivo},
  journal={Computing},
  volume={5},
  number={3},
  pages={207--213},
  year={1970},
  publisher={Springer}
}

@article{kirchhart2016analysis,
  title={Analysis of an \mbox{XFEM} discretization for \mbox{S}tokes interface problems},
  author={Kirchhart, Matthias and Gross, Sven and Reusken, Arnold},
  journal={SIAM Journal on Scientific Computing},
  volume={38},
  number={2},
  pages={A1019--A1043},
  year={2016},
  publisher={SIAM}
}

@article{DG2012,
  title={An unfitted discontinuous \mbox{G}alerkin method applied to elliptic interface problems},
  author={Massjung, Ralf},
  journal={SIAM Journal on Numerical Analysis},
  volume={50},
  number={6},
  pages={3134--3162},
  year={2012},
  publisher={SIAM}
}

@article{lehrenfeld2013analysis,
  title={Analysis of a \mbox{N}itsche \mbox{XFEM-DG} discretization for a class of two-phase mass transport problems},
  author={Lehrenfeld, Christoph and Reusken, Arnold},
  journal={SIAM Journal on Numerical Analysis},
  volume={51},
  number={2},
  pages={958--983},
  year={2013},
  publisher={SIAM}
}

@article{wang2018convergence,
  title={Convergence of finite difference methods for the wave equation in two space dimensions},
  author={Wang, Siyang and Nissen, Anna and Kreiss, Gunilla},
  journal={Mathematics of Computation},
  volume={87},
  number={314},
  pages={2737--2763},
  year={2018}
}

@book{li2006immersed,
  title={The immersed interface method: numerical solutions of \mbox{PDE}s involving interfaces and irregular domains},
  author={Li, Zhilin and Ito, Kazufumi},
  year={2006},
  publisher={SIAM}
}

@article{leveque1994immersed,
  title={The immersed interface method for elliptic equations with discontinuous coefficients and singular sources},
  author={LeVeque, Randall J and Li, Zhilin},
  journal={SIAM Journal on Numerical Analysis},
  volume={31},
  number={4},
  pages={1019--1044},
  year={1994},
  publisher={SIAM}
}

@article{yu2007matched,
  title={Matched interface and boundary (\mbox{MIB}) method for elliptic problems with sharp-edged interfaces},
  author={Yu, Sining and Zhou, Yongcheng and Wei, Guo-Wei},
  journal={Journal of Computational Physics},
  volume={224},
  number={2},
  pages={729--756},
  year={2007},
  publisher={Elsevier}
}

@article{zhou2006high,
  title={High order matched interface and boundary method for elliptic equations with discontinuous coefficients and singular sources},
  author={Zhou, YC and Zhao, Shan and Feig, Michael and Wei, Guo-Wei},
  journal={Journal of Computational Physics},
  volume={213},
  number={1},
  pages={1--30},
  year={2006},
  publisher={Elsevier}
}

@article{fedkiw1999non,
  title={A non-oscillatory \mbox{E}ulerian approach to interfaces in multimaterial flows (the ghost fluid method)},
  author={Fedkiw, Ronald P and Aslam, Tariq and Merriman, Barry and Osher, Stanley},
  journal={Journal of Computational Physics},
  volume={152},
  number={2},
  pages={457--492},
  year={1999},
  publisher={Elsevier}
}

@article{liu2000boundary,
  title={A boundary condition capturing method for \mbox{P}oisson's equation on irregular domains},
  author={Liu, Xu-Dong and Fedkiw, Ronald P and Kang, Myungjoo},
  journal={Journal of Computational Physics},
  volume={160},
  number={1},
  pages={151--178},
  year={2000},
  publisher={Elsevier}
}

@article{raissi2019physics,
  title={Physics-informed neural networks: a deep learning framework for solving forward and inverse problems involving nonlinear partial differential equations},
  author={Raissi, Maziar and Perdikaris, Paris and Karniadakis, George E},
  journal={Journal of Computational Physics},
  volume={378},
  pages={686--707},
  year={2019},
  publisher={Elsevier}
}

@article{sirignano2018dgm,
  title={\mbox{DGM}: a deep learning algorithm for solving partial differential equations},
  author={Sirignano, Justin and Spiliopoulos, Konstantinos},
  journal={Journal of Computational Physics},
  volume={375},
  pages={1339--1364},
  year={2018},
  publisher={Elsevier}
}

@article{yu2018deep,
  title={The deep \mbox{R}itz method: a deep learning-based numerical algorithm for solving variational problems},
  author={Yu, Bing and others},
  journal={Communications in Mathematics and Statistics},
  volume={6},
  number={1},
  pages={1--12},
  year={2018},
  publisher={Springer}
}

@article{wang2024multi,
  title={Multi-stage neural networks: function approximator of machine precision},
  author={Wang, Yongji and Lai, Ching-Yao},
  journal={Journal of Computational Physics},
  volume={504},
  pages={112865},
  year={2024},
  publisher={Elsevier}
}

@article{aldirany2024multi,
  title={Multi-level neural networks for accurate solutions of boundary-value problems},
  author={Aldirany, Ziad and Cottereau, R{\'e}gis and Laforest, Marc and Prudhomme, Serge},
  journal={Computer Methods in Applied Mechanics and Engineering},
  volume={419},
  pages={116666},
  year={2024},
  publisher={Elsevier}
}

@article{huang2006extreme,
  title={Extreme learning machine: theory and applications},
  author={Huang, Guang-Bin and Zhu, Qin-Yu and Siew, Chee-Kheong},
  journal={Neurocomputing},
  volume={70},
  number={1-3},
  pages={489--501},
  year={2006},
  publisher={Elsevier}
}

@article{pao1994learning,
  title={Learning and generalization characteristics of the random vector functional-link net},
  author={Pao, Yoh-Han and Park, Gwang-Hoon and Sobajic, Dejan J},
  journal={Neurocomputing},
  volume={6},
  number={2},
  pages={163--180},
  year={1994},
  publisher={Elsevier}
}

@article{lin2014elm,
  title={Is extreme learning machine feasible? A theoretical assessment (\mbox{P}art \mbox{II})},
  author={Lin, Shaobo and Liu, Xia and Fang, Jian and Xu, Zongben},
  journal={IEEE Transactions on Neural Networks and Learning Systems},
  volume={26},
  number={1},
  pages={21--34},
  year={2014},
  publisher={IEEE}
}

@article{igelnik1995stochastic,
  title={Stochastic choice of basis functions in adaptive function approximation and the functional-link net},
  author={Igelnik, Boris and Pao, Yoh-Han},
  journal={IEEE Transactions on Neural Networks},
  volume={6},
  number={6},
  pages={1320--1329},
  year={1995},
  publisher={IEEE}
}

@article{zhang2025immersed,
  title={An immersed interface neural network for elliptic interface problems},
  author={Zhang, Xinru and He, Qiaolin},
  journal={Journal of Computational and Applied Mathematics},
  volume={459},
  pages={116372},
  year={2025},
  publisher={Elsevier}
}

@article{jagtap2020extended,
  title={Extended physics-informed neural networks (\mbox{XPINN}s): a generalized space-time domain decomposition based deep learning framework for nonlinear partial differential equations},
  author={Jagtap, Ameya D and Karniadakis, George Em},
  journal={Communications in Computational Physics},
  volume={28},
  number={5},
  year={2020},
  publisher={Brown Univ., Providence, RI (United States)}
}

@article{tseng2023cusp,
  title={A cusp-capturing \mbox{PINN} for elliptic interface problems},
  author={Tseng, Yu-Hau and Lin, Te-Sheng and Hu, Wei-Fan and Lai, Ming-Chih},
  journal={Journal of Computational Physics},
  volume={491},
  pages={112359},
  year={2023},
  publisher={Elsevier}
}

@article{zhu2023physics,
  title={Physics-informed neural networks for solving dynamic two-phase interface problems},
  author={Zhu, Xingwen and Hu, Xiaozhe and Sun, Pengtao},
  journal={SIAM Journal on Scientific Computing},
  volume={45},
  number={6},
  pages={A2912--A2944},
  year={2023},
  publisher={SIAM}
}

@article{roy2024adaptive,
  title={Adaptive interface-\mbox{PINN}s (\mbox{AdaI-PINN}s): an efficient physics-informed neural networks framework for interface problems},
  author={Roy, Sumanta and Annavarapu, Chandrasekhar and Roy, Pratanu and Sarma, Antareep Kumar},
  journal={arXiv preprint arXiv:2406.04626},
  year={2024}
}

@article{lai2025hard,
  title={The hard-constraint \mbox{PINN}s for interface optimal control problems},
  author={Lai, Ming-Chih and Song, Yongcun and Yuan, Xiaoming and Yue, Hangrui and Zeng, Tianyou},
  journal={SIAM Journal on Scientific Computing},
  volume={47},
  number={3},
  pages={C601--C629},
  year={2025},
  publisher={SIAM}
}

@article{sarma2024interface,
  title={Interface \mbox{PINN}s (\mbox{I-PINN}s): a physics-informed neural networks framework for interface problems},
  author={Sarma, Antareep Kumar and Roy, Sumanta and Annavarapu, Chandrasekhar and Roy, Pratanu and Jagannathan, Shriram},
  journal={Computer Methods in Applied Mechanics and Engineering},
  volume={429},
  pages={117135},
  year={2024},
  publisher={Elsevier}
}

@article{li2025physics,
  title={Physics-informed neural networks for enhanced interface preservation in lattice \mbox{B}oltzmann multiphase simulations},
  author={Li, Yue and Zhang, Lihong},
  journal={arXiv preprint arXiv:2504.10539},
  year={2025}
}

@book{showalter2013monotone,
  title={Monotone operators in \mbox{B}anach space and nonlinear partial differential equations},
  author={Showalter, Ralph Edwin},
  volume={49},
  year={2013},
  publisher={American Mathematical Soc.}
}

@book{zeidler2013nonlinear,
  title={Nonlinear functional analysis and its applications: \mbox{II/B}: nonlinear monotone operators},
  author={Zeidler, Eberhard},
  year={2013},
  publisher={Springer Science \& Business Media}
}

@article{lee2025nonoverlapping,
  title={A nonoverlapping domain decomposition method for extreme learning machines: elliptic problems},
  author={Lee, Chang-Ock and Lee, Youngkyu and Ryoo, Byungeun},
  journal={Computers \& Mathematics with Applications},
  volume={189},
  pages={109--128},
  year={2025},
  publisher={Elsevier}
}

@article{ren2025physics,
  title={Physics-informed extreme learning machine (\mbox{PIELM}) for \mbox{S}tefan problems},
  author={Ren, Fei and Zhuang, Pei-Zhi and Chen, Xiaohui and Yu, Hai-Sui and Yang, He},
  journal={Computer Methods in Applied Mechanics and Engineering},
  volume={441},
  pages={118015},
  year={2025},
  publisher={Elsevier}
}

@article{dong2021local,
  title={Local extreme learning machines and domain decomposition for solving linear and nonlinear partial differential equations},
  author={Dong, Suchuan and Li, Zongwei},
  journal={Computer Methods in Applied Mechanics and Engineering},
  volume={387},
  pages={114129},
  year={2021},
  publisher={Elsevier}
}

@article{lu2021deepxde,
  title={Deep\mbox{XDE}: a deep learning library for solving differential equations},
  author={Lu, Lu and Meng, Xuhui and Mao, Zhiping and Karniadakis, George Em},
  journal={SIAM Review},
  volume={63},
  number={1},
  pages={208--228},
  year={2021},
  publisher={SIAM}
}

@article{kharazmi2021hp,
  title={hp-\mbox{VPINN}s: variational physics-informed neural networks with domain decomposition},
  author={Kharazmi, Ehsan and Zhang, Zhongqiang and Karniadakis, George Em},
  journal={Computer Methods in Applied Mechanics and Engineering},
  volume={374},
  pages={113547},
  year={2021},
  publisher={Elsevier}
}

@article{chen2022bridging,
  title={Bridging traditional and machine learning-based algorithms for solving \mbox{PDE}s: the random feature method},
  author={Chen, Jingrun and Chi, Xurong and Yang, Zhouwang and others},
  journal={Journal of Machine Learning},
  volume={1},
  number={3},
  pages={268--298},
  year={2022}
}

@article{zeng2025high,
  title={High-precision physics-informed extreme learning machines for evolving interface problems},
  author={Zeng, Shaojie and Liang, Yijie and Zhang, Qinghui},
  journal={Engineering Analysis with Boundary Elements},
  volume={174},
  pages={106171},
  year={2025},
  publisher={Elsevier}
}

@article{ying2004decomposition,
  title={A decomposition theorem for the solutions to the interface problems of quasi-linear elliptic equations},
  author={Ying, Lung An},
  journal={Acta Mathematica Sinica},
  volume={20},
  pages={859--868},
  year={2004},
  publisher={Springer}
}

@article{liu2023meshfree,
  title={Meshfree methods for nonlinear equilibrium radiation diffusion equation with jump coefficient},
  author={Liu, Haowei and Liu, Zhiyong and Xu, Qiuyan and Yang, Jiye},
  journal={Computers \& Mathematics with Applications},
  volume={147},
  pages={239--258},
  year={2023},
  publisher={Elsevier}
}

@article{adjerid2025immersed,
  title={Immersed finite element methods for linear and quasi-linear elliptic interface problems},
  author={Adjerid, Slimane},
  journal={Computers \& Mathematics with Applications},
  volume={197},
  pages={19--42},
  year={2025},
  publisher={Elsevier}
}

@article{mishra2022estimates,
  title={Estimates on the generalization error of physics-informed neural networks for approximating a class of inverse problems for \mbox{PDE}s},
  author={Mishra, Siddhartha and Molinaro, Roberto},
  journal={IMA Journal of Numerical Analysis},
  volume={42},
  number={2},
  pages={981--1022},
  year={2022},
  publisher={Oxford University Press}
}

@article{mishra2023estimates,
  title={Estimates on the generalization error of physics-informed neural networks for approximating \mbox{PDE}s},
  author={Mishra, Siddhartha and Molinaro, Roberto},
  journal={IMA Journal of Numerical Analysis},
  volume={43},
  number={1},
  pages={1--43},
  year={2023},
  publisher={Oxford University Press}
}

@article{li2025local,
  title={Local randomized neural networks with finite difference methods for interface problems},
  author={Li, Yunlong and Wang, Fei},
  journal={Journal of Computational Physics},
  volume={529},
  pages={113847},
  year={2025},
  publisher={Elsevier}
}

@article{wang2021new,
  title={New immersed finite volume element method for elliptic interface problems with non-homogeneous jump conditions},
  author={Wang, Quanxiang and Zhang, Zhiyue and Wang, Liqun},
  journal={Journal of Computational Physics},
  volume={427},
  pages={110075},
  year={2021},
  publisher={Elsevier}
}

@article{fan2026hybrid,
    author = {Fan, Chen and Lang, Siyuan and Toseef, Muhammad and Zhang, Zhiyue},
    title = {A hybrid predictor--corrector decoupled method based on operator learning for solving interface problems},
    journal = {Journal of Nonlinear Science},
    year = {2026},
    volume = {36},
    pages = {25}
}

@article{fan2024decoupling,
  title={Decoupling numerical method based on deep neural network for nonlinear degenerate interface problems},
  author={Fan, Chen and Ali, Muhammad Aamir and Zhang, Zhiyue},
  journal={Computer Physics Communications},
  volume={303},
  pages={109275},
  year={2024},
  publisher={Elsevier}
}

@article{zhao2021semi,
  title={Semi-decoupling hybrid asymptotic and augmented finite volume method for nonlinear singular interface problems},
  author={Zhao, Tengjin and Ito, Kazufumi and Zhang, Zhiyue},
  journal={Journal of Computational and Applied Mathematics},
  volume={396},
  pages={113606},
  year={2021},
  publisher={Elsevier}
}

@article{he2022mesh,
  title={A mesh-free method using piecewise deep neural network for elliptic interface problems},
  author={He, Cuiyu and Hu, Xiaozhe and Mu, Lin},
  journal={Journal of Computational and Applied Mathematics},
  volume={412},
  pages={114358},
  year={2022},
  publisher={Elsevier}
}

@article{hu2022discontinuity,
  title={A discontinuity capturing shallow neural network for elliptic interface problems},
  author={Hu, Wei-Fan and Lin, Te-Sheng and Lai, Ming-Chih},
  journal={Journal of Computational Physics},
  volume={469},
  pages={111576},
  year={2022},
  publisher={Elsevier}
}

@article{chang2023hybrid,
  title={A hybrid neural-network and \mbox{MAC} scheme for \mbox{S}tokes interface problems},
  author={Chang, Che-Chia and Dai, Chen-Yang and Hu, Wei-Fan and Lin, Te-Sheng and Lai, Ming-Chih},
  journal={arXiv preprint arXiv:2306.06333},
  year={2023}
}

@article{li2023local,
  title={Local randomized neural networks methods for interface problems},
  author={Li, Yunlong and Wang, Fei},
  journal={arXiv preprint arXiv:2308.03087},
  year={2023}
}

@article{wu2024solving,
  title={Solving parametric elliptic interface problems via interfaced operator network},
  author={Wu, Sidi and Zhu, Aiqing and Tang, Yifa and Lu, Benzhuo},
  journal={Journal of Computational Physics},
  volume={514},
  pages={113217},
  year={2024},
  publisher={Elsevier}
}

@article{chi2024random,
  title={The random feature method for solving interface problems},
  author={Chi, Xurong and Chen, Jingrun and Yang, Zhouwang},
  journal={Computer Methods in Applied Mechanics and Engineering},
  volume={420},
  pages={116719},
  year={2024},
  publisher={Elsevier}
}

@article{bi2025xi,
  title={\mbox{XI-DeepONet}: an operator learning method for elliptic interface problems},
  author={Bi, Ran and Chen, Jingrun and Deng, Weibing},
  journal={Journal of Computational Physics},
  volume={538},
  pages={114164},
  year={2025},
  publisher={Elsevier}
}

@article{zhai2026physics,
  title={Physics-informed neural networks for solving two-phase flow problems with moving interfaces},
  author={Zhai, Qijia and Sun, Pengtao and Xie, Xiaoping and Zhu, Xingwen and Zhang, Chen-Song},
  journal={arXiv preprint arXiv:2604.00948},
  year={2026}
}

@article{hansbo2002unfitted,
  title={An unfitted finite element method, based on \mbox{N}itsche’s method, for elliptic interface problems},
  author={Hansbo, Anita and Hansbo, Peter},
  journal={Computer Methods in Applied Mechanics and Engineering},
  volume={191},
  number={47-48},
  pages={5537--5552},
  year={2002},
  publisher={Elsevier}
}

@article{burman2015cutfem,
  title={\mbox{CutFEM}: discretizing geometry and partial differential equations},
  author={Burman, Erik and Claus, Susanne and Hansbo, Peter and Larson, Mats G and Massing, Andr{\'e}},
  journal={International Journal for Numerical Methods in Engineering},
  volume={104},
  number={7},
  pages={472--501},
  year={2015},
  publisher={Wiley Online Library}
}

@book{nocedal2006numerical,
  title={Numerical optimization},
  author={Nocedal, Jorge and Wright, Stephen J},
  year={2006},
  publisher={Springer}
}

@book{bjorck2024numerical,
  title={Numerical methods for least squares problems},
  author={Bj{\"o}rck, {\AA}ke},
  year={2024},
  publisher={SIAM}
}

@article{hansen1987truncated,
  title={The truncated \mbox{SVD} as a method for regularization},
  author={Hansen, Per Christian},
  journal={BIT Numerical Mathematics},
  volume={27},
  number={4},
  pages={534--553},
  year={1987},
  publisher={Springer}
}

@article{brenner2003poincare,
  title={Poincar{\'e}--\mbox{F}riedrichs Inequalities for Piecewise $H^1$ Functions},
  author={Brenner, Susanne C},
  journal={SIAM Journal on Numerical Analysis},
  volume={41},
  number={1},
  pages={306--324},
  year={2003},
  publisher={SIAM}
}

@article{li2000gradient,
  title={Gradient estimates for solutions to divergence form elliptic equations with discontinuous coefficients},
  author={Li, Yan Yan and Vogelius, Michael},
  journal={Archive for rational mechanics and analysis},
  volume={153},
  number={2},
  pages={91--151},
  year={2000},
  publisher={Springer}
}

@article{li2003estimates,
  title={Estimates for elliptic systems from composite material},
  author={Li, Yanyan and Nirenberg, Louis},
  journal={Communications on Pure and Applied Mathematics: A Journal Issued by the Courant Institute of Mathematical Sciences},
  volume={56},
  number={7},
  pages={892--925},
  year={2003},
  publisher={Wiley Subscription Services, Inc., A Wiley Company New York}
}

@article{dong2019gradient,
  title={Gradient estimates for divergence form elliptic systems arising from composite material},
  author={Dong, Hongjie and Xu, Longjuan},
  journal={SIAM Journal on Mathematical Analysis},
  volume={51},
  number={3},
  pages={2444--2478},
  year={2019},
  publisher={SIAM}
}

@book{girault2012finite,
  title={Finite element methods for \mbox{N}avier-\mbox{S}tokes equations: theory and algorithms},
  author={Girault, Vivette and Raviart, Pierre-Arnaud},
  year={2012},
  publisher={Springer Science \& Business Media}
}

\end{document}